\crefname{equation}{}{}
\crefname{lem}{Lemma}{Lemmas}
\crefname{thm}{Theorem}{Theorems}
\newcommand{\whk}[0]{\widehat{\lambda}_k}
\newcommand{\dd}{\,{\rm d}}
\newcommand{\R}{\,{\mathbb R}}
\newcommand{\dual}[1]{\left\langle {#1} \right\rangle}
\newcommand{\prox}[0]{ {\bf prox}}
\newcommand{\argmin}[0]{ {\mathop{{\rm  argmin}}\,}}
\newcommand{\st}[0]{ {{\rm  s.t.}\,}}
\newcommand{\nm}[1]{\left\lVert {#1} \right\rVert}
\newcommand{\snm}[1]{\left\lvert {#1} \right\rvert}
\newcommand{\ssnm}[1]
{
	\left\vert\kern-0.25ex
	\left\vert\kern-0.25ex
	\left\vert
	{#1}
	\right\vert\kern-0.25ex
	\right\vert\kern-0.25ex
	\right\vert
}
\def\spher@harm#1{%
	\vbox{\hbox{%
			\offinterlineskip
			\valign{&\hb@xt@2\p@{\hss$##$\hss}\vskip.2ex\cr#1\crcr}%
		}\vskip-.36ex}%
}
\def\gshone{\spher@harm{.}}
\def\gshtwo{\spher@harm{.&.}}
\def\gshthree{\spher@harm{.&.&.}}
\let\gsh\spher@harm
\newtheorem{lem}{Lemma}[section]
\newtheorem{remark}{Remark}[section]
\newtheorem{thm}{Theorem}[section]
\newcolumntype{I}{!{\vrule width 1,5pt}}
\newlength\savedwidth
\newlength\savewidth
\newcounter{mnote}
\let\oldmarginpar\marginpar
\renewcommand\marginpar[1]
\def\@captype{table}\makeatother
\begin{document}
\title{
  \Large \bf Accelerated primal-dual methods for linearly constrained convex optimization problems
}
\author{Hao Luo\thanks{School of Mathematical Sciences, Peking University, Beijing, 100871, China. Email: luohao@math.pku.edu.cn}}
\date{}
\maketitle

\begin{abstract}
		This work proposes an accelerated primal-dual dynamical system for affine constrained convex optimization and presents a class of primal-dual methods with nonergodic convergence rates. In continuous level, exponential decay of a novel Lyapunov function is established and in discrete level, implicit, semi-implicit and explicit numerical discretizations for the continuous model are considered sequentially and lead to new accelerated primal-dual methods for solving linearly constrained optimization problems. Special structures of the subproblems in those schemes are utilized to develop efficient inner solvers. In addition, nonergodic convergence rates in terms of  primal-dual gap, primal objective residual and feasibility violation are proved via a tailored discrete Lyapunov function. Moreover, our method has also been applied to decentralized distributed optimization for fast and efficient solution.
\end{abstract}
	\medskip\noindent{\bf Keywords:} 
	convex optimization, linear constraint, dynamical system, exponential decay, primal-dual method, acceleration, nonergodic rate, decentralized distributed optimization


\section{Introduction}
In this paper, we are concerned with primal-dual methods for linearly constrained convex optimization:
\begin{equation}\label{eq:min-f-X-Ax-b}
	\min_{x\in \mathcal X}\,f(x)\quad \st~Ax = b,
\end{equation}
where $A\in\R^{m\times n},\,b\in\R^m,\,f:\R^n\to\R\cup\{+\infty\}$ is proper, closed and convex but possibly nonsmooth and $\mathcal X\subset \R^n$ is some (simple) closed convex set such as the box or the half space.
Through out, the domain of $f$
is assumed to have nonempty intersection with $\mathcal X$;
also, to promise nonempty feasible set, the vector $b$ shall belong
to the image of $\mathcal X$ under the linear transform $A:\R^n\to\R^m$.

The well-known augmented Lagrangian method (ALM) for \cref{eq:min-f-X-Ax-b} can be dated back to \cite{hestenes_multiplier_1969}. It recovers the proximal point algorithm for the dual problem of  \cref{eq:min-f-X-Ax-b} (cf. \cite{rockafellar_augmented_1976}) and is also equivalent to the Bregman method \cite{yin_bregman_2008} for total variation-based image restoration. 
Accelerated variants of the classical ALM using extrapolation technique \cite{Nesterov1983,tseng_on_accelerated_Seattle_2008} for the multiplier are summarized as follows. For smooth objective, He and Yuan \cite{he_acceleration_2010} proposed an accelerated ALM. Later in \cite{kang_accelerated_2013}, this was extended to nonsmooth case, and further generalizations such as inexact version and linearization can be found in \cite{huang_accelerated_2013,kang_inexact_2015}. For strongly convex but not necessarily smooth objective, Tao and Yuan \cite{tao_accelerated_2016} proposed an accelerated Uzawa method. We note that those accelerated methods mentioned here share the same nonergodic convergence rate $O(1/k^2)$ for the dual variable $\lambda_k$ (or the nonnegative residual $\mathcal L(x^*,\lambda^*)-\mathcal L(x_k,\lambda_k)$ which is (approximately) equal to the dual objective residual). 

To get nonergodic rates for the primal objective residual $\snm{f(x_k)-f(x^*)}$ and the feasibility violation $\nm{Ax_k-b}$, quadratic penalty  with continuation \cite{Lan2013} is sometimes combined 
with extrapolation. The accelerated quadratic penalty (AQP) method in \cite{li_convergence_2017} was proved to enjoy the rates $O(1/k)$ and $O(1/k^2)$, respectively for convex and strongly convex cases. In \cite{Xu2017}, a partially linearized accelerated proximal ALM was proposed and the sublinear rate $O(1/k^2)$ has been established for convex objective. However, for strongly convex case, the convergence rate of the fully linearized proximal ALM in \cite{Xu2017} is in ergodic sense. Based on Nesterov's smoothing technique \cite{nesterov_excessive_2005,nesterov_smooth_2005}, Tran-Dinh et al. \cite{patrascu_adaptive_2017,tran-dinh_constrained_2014,tran-dinh_primal-dual_2015,tran-dinh_smooth_2018} developed a primal-dual framework for linearly constrained convex optimization and applied it to \cref{eq:min-f-X-Ax-b} to obtain accelerated rates in nonergodic sense. Sabach and  Teboulle \cite{sabach_faster_2020} also presented a novel algorithm framework that can be used to \cref{eq:min-f-X-Ax-b} for nonergodic convergence rate.

For linear and quadratic programmings, superlinearly convergent semi-smooth Newton (SsN) based proximal augmented Lagrangian methods have been proposed in \cite{li_asymptotically_2020,niu_sparse_2021}.
It is worth noticing that Salim et al. \cite{salim_optimal_2021} developed a linearly convergent  primal-dual algorithm for problem \cref{eq:min-f-X-Ax-b} with strongly convex smooth objective and full column rank $A$. This method requires an inner Chebyshev iteration that plays the role of precondition and has been proved to achieve the complexity lower bound $\sqrt{\kappa_f\chi}\snm{\ln\epsilon}$, where $\kappa_f$ and $\chi$ are the condition numbers of $f$ and $A^{\top}A$, respectively.

On the other hand, some continuous-time primal-dual dynamical models for \cref{eq:min-f-X-Ax-b} have been developed as well. In \cite{zeng_distributed_2018}, Zeng et al. proposed two continuous models, and with strictly convex assumption, they proved the decay rate $O(1/t)$ for the primal-dual gap in ergodic sense. In \cite{zeng_dynamical_2019}, the asymptotic vanishing damping model \cite{su_dierential_2016} for unconstrained optimization 
was extended to a continuous-time primal-dual accelerated method with 
the decay rate $O(1/t^{2})$. We refer to \cite{bot_improved_2021,he_perturbed_2021} for more generalizations. 
However, none of the above works considered numerical discretizations for their models and developed new primal-dual algorithms. Recently, in \cite{he_convergence_2021,he_fast_2021,he_inertial_2021}, He et al. extended the inertial primal-dual dynamical system in \cite{zeng_distributed_2018} to obtain faster decay rates, by introducing suitable time scaling factors. They also proposed primal-dual methods based on proper time discretizations and proved nonergodic rate $O(1/k^2)$ for convex objective. In addition, for implicit scheme, linear rate has been proved by means of time rescaling effect.
For the two block case: 
\begin{equation}\label{eq:2b}
	f(x) =f_1(x_1)+ f_2(x_2),\quad Ax = A_1x_1+A_2x_2,
\end{equation}
more primal-dual dynamical systems can be found in \cite{attouch_fast_2021,franca_admm_2018,franca_nonsmooth_2021,he_convergence_2020}. In this setting, or even more general multi-block case (cf. \cref{eq:nb}), the alternating direction method of multiplies (ADMM) is one of the most prevailing splitting algorithms. We refer to \cite{boyd_distributed_2010,chambolle_first-order_2011,chambolle_ergodic_2016,chen_direct_2016,goldstein_fast_2014,han_linear_2015,He_Yuan2012,he_non-ergodic_2015,Li2019,lin_iteration_2015,liu_partial_2018,yuan_discerning_2020} and the references therein.

The remainder of this paper is organized as follows. In the rest of the introduction part, we continue with some essential notations and briefly summarize our main results. In \cref{sec:apd}, the accelerated primal-dual flow model is introduced and the exponential decay shall be established as well. Then, implicit, semi-implicit and explicit discretization  are considered sequentially from \cref{sec:apd-im,sec:apd-ex-x-im-l,sec:apd-correc-ex-x-im-l,sec:apd-ex-x-ex-l}, and nonergodic convergence rates are proved via a unified discrete Lyapunov function. After that, numerical reports for decentralized distributed optimization are presented in \cref{sec:ddo}, and finally, some concluding remarks are given in \cref{sec:apd-conclu}.
\subsection{Notations}
Let $\dual{\cdot,\cdot}$ be the usual $l^2$-inner product and set $\nm{\cdot} = \sqrt{\dual{\cdot,\cdot}}$. 
For a proper, closed and convex function $g:\R^{n}\to\R\cup\{+\infty\}$, we say $g\in\mathcal S_{\mu}^0(\mathcal X)$ if $\mu\geqslant 0$ and
\begin{equation}\label{eq:def-mu}
	g(y)\geqslant g(y)+\dual{p,y-x}+\frac{\mu}{2}\nm{y-x}^2
	\quad\text{for all }x,\,y\in\mathcal X,
\end{equation}
where $p\in\partial g(x)$. Let $\mathcal S_{\mu}^1(\mathcal X)$ be the set of all continuous differentiable functions in $\mathcal S_{\mu}^0(\mathcal X)$, and moreover, if $g\in\mathcal S_{\mu}^1(\mathcal X)$ has $L$-Lipschitz continuous gradient:
\[
\dual{\nabla g(x)-\nabla g(y),x-y}\leqslant L\nm{x-y}^2
\quad\text{for all }x,\,y\in\mathcal X,
\]
then we say $g\in \mathcal S_{\mu,L}^{1,1}(\mathcal X)$.
If $\mathcal X=\R^n$, then the underlying space $\mathcal X$ shall be dropped for simplicity, e.g., $\mathcal S_{\mu}^0(\R^n) = \mathcal S_{\mu}^0$.

For any $\beta\geqslant 0$, we set $g_\beta=g+\beta/2\nm{Ax-b}^2$ and for $\sigma>0$, let $\ell_\sigma(x): = 1/(2\sigma)\nm{Ax-b}^2$. It is evident that if $g\in\mathcal S_{\mu}^{0}(\mathcal X) $, then $g_\beta\in\mathcal S_{\mu_\beta}^0(\mathcal X)$, where  $\mu_\beta=\mu+\beta\sigma_{\min}^2(A)$ with $\sigma_{\min}(A)\geqslant 0$ being the smallest singular value of $A$. In addition, if $g\in\mathcal S_{\mu,L}^{1,1}(\mathcal X)$, then $g_\beta\in\mathcal S_{\mu_\beta,L_\beta}^{1,1}(\mathcal X)$, where  $L_\beta=L+\beta\nm{A}^2$. Moreover, for $\eta>0$, let $\prox_{ \eta g}^{\mathcal X}:\R^n\to\mathcal X$ be the proximal operator of $g$ over $\mathcal X$:
\begin{equation}\label{eq:prox-X}
	\prox^{\mathcal X}_{\eta g}(x): = \mathop{\rm argmin}\limits_{y\in\mathcal X}\left\{
	g(y)+\frac{1}{2\eta}\nm{y-x}^2
	\right\}
	\quad\text{for all }x\in\R^n.
\end{equation}
It is clear that $\prox^{\mathcal X}_{\eta g} = \prox_{\eta (g+\delta_{\mathcal X})}$, where $\delta_{\mathcal X}$ denotes the indicator function of $\mathcal X$, and if $\mathcal X=\R^n$, then \cref{eq:prox-X} agrees with the conventional proximal operator $\prox_{\eta g}$.

Given any $\beta\geqslant 0$, define the augmented Lagrangian of \cref{eq:min-f-X-Ax-b} by that
\[
\mathcal L_\beta(x,\lambda): =  f_\beta(x) + \delta_{\mathcal X}(x)
+\dual{\lambda,Ax-b}
\quad \forall\,(x,\lambda)\in
\R^n\times\R^m,
\]
and for $\beta=0$, we write $\mathcal L(x,\lambda)=\mathcal L_0(x,\lambda)$. Let $(x^*,\lambda^*)$ be a saddle point of $\mathcal L(x,\lambda)$, which means
\[
\min_{x\in\R^n} \mathcal L(x,\lambda^*)= \mathcal L(x^*,\lambda^*)=	\max_{\lambda\in\R^m}\mathcal L(x^*,\lambda) ,
\]
then $(x^*,\lambda^*)$ also 
satisfies the Karush--Kuhn--Tucker (KKT) system
\begin{equation}\label{eq:KKT}
	\left\{
	\begin{split}
		&{}0=Ax^*-b,\\		
		&{}0\in \partial f(x^*)+N_{\mathcal X}(x^*)+A^{\top}\lambda^*,
	\end{split}
	\right.
\end{equation}
where $\partial f(x^*)$ denotes the subdifferential of $f$ at $x^*$ and $N_{\mathcal X}(x^*)$ is the {\it norm cone} of $\mathcal X$ at $x^*$, which is defined as $	N_{\mathcal X}(x^*): = \left\{y\in\R^n:\dual{y,z-x^*}\leqslant 0
\,\,\text{ for all } z\in \mathcal X\right\}$. Throughout, we assume \cref{eq:min-f-X-Ax-b} admits at least one KKT point $(x^*,\lambda^*)$ satisfying \cref{eq:KKT}.
\subsection{Summary of main results}
\label{sec:apd-main-res}
In this work, for problem \cref{eq:min-f-X-Ax-b} with $f\in\mathcal S_\mu^0(\mathcal X),\,\mu\geqslant 0$, 
we propose the {\it accelerated primal-dual} (APD) flow system
\begin{subnumcases}{}
	\theta\lambda'  ={}\nabla_\lambda \mathcal L_\beta(v,\lambda),
	\label{eq:apd-sys-lambda-intro}\\
	x' = {}v-x,
	\label{eq:apd-sys-x-intro}\\
	\gamma v'  \in{}\mu_\beta(x-v)-\partial_x \mathcal L_\beta(x,\lambda),
	\label{eq:apd-sys-v-intro}
\end{subnumcases}
where $\partial_x\mathcal L_\beta(x,\lambda)=\partial f_\beta(x)
+N_{\mathcal X}(x)+A^{\top}\lambda$ and 
the above two scaling factors $\theta$ and $\gamma$ satisfy $	\theta' ={}-\theta$ and $
\gamma' ={}\mu_\beta-\gamma$, respectively.
We also introduce a novel Lyapunov function
\begin{equation}\label{eq:Et-intro}
	\mathcal E(t) = \mathcal L_\beta(x(t),\lambda^*) - \mathcal L_\beta(x^*,\lambda(t)) + \frac{\gamma(t)}{2}\nm{v(t)-x^*}^2+\frac{\theta(t)}{2}\nm{\lambda(t)-\lambda^*}^2,
\end{equation}
and prove the exponential decay $\mathcal E(t)=O(e^{-t})$ uniformly for $\mu_\beta\geqslant 0$, under the smooth case $f\in\mathcal S_\mu^1$. For general nonsmooth case, i.e., the differential inclusion \cref{eq:apd-sys-lambda-intro} itself, solution existence in proper sense together with the exponential decay is not considered in this paper. In addition, compared with our previous first-order primal-dual flow system \cite{luo_primal-dual_2021}, the current model \cref{eq:apd-sys-v-intro}, together with its time discretizations presented in this work, can be viewed as accelerated extensions.

Nevertheless, a family of accelerated primal-dual algorithms for \cref{eq:min-f-X-Ax-b} are presented systematically from numerical discretizations of our APD flow \cref{eq:apd-sys-lambda-intro} and 
analyzed via a unified Lyapunov function
\begin{equation}\label{eq:apd-Ek-intro}
	\mathcal E_k=
	\mathcal L_\beta(x_k,\lambda^*)-\mathcal L_\beta(x^*,\lambda_k)
	+\frac{\theta_k}{2}\nm{\lambda_k-\lambda^*}^2
	+\frac{\gamma_k}{2}\nm{v_k-x^*}^2,
\end{equation}
which is a discrete analogue to \cref{eq:Et-intro}. We shall prove the contraction property
\[
\mathcal E_{k+1}-\mathcal E_k\leqslant -\alpha_k\mathcal E_{k+1}
\quad \text{for all } k\in\mathbb N, 
\]
and then derive the {\it nonergodic} convergence estimate
\[
\mathcal L(x_k,\lambda^*)-\mathcal L(x^*,\lambda_k)+\snm{f(x_k)-f(x^*)}+\nm{Ax_k-b}\leqslant C\theta_k,
\]
where $\theta_k$ gives explicit decay rate for each method and $C>0$ is some constant. 

All these methods differ mainly from the treatment for the subproblem \cref{eq:xk1}, and we give a brief summary as below.
\begin{itemize}
	\item For convex objective $f$, if we use the augmented proximal subproblem
	\begin{equation}\label{eq:xk1}
		x_{k+1}=\mathop{\argmin}\limits_{x\in\mathcal X}\left\{f(x)+\frac{\sigma_k}{2}\nm{Ax-b}^2+\frac{\alpha_k}{2}\nm{x-\widehat{x}_k}^2\right\},
	\end{equation}
	then we have linear rate; see the implicit scheme \cref{eq:apd-im-x-im-l-v} and \cref{thm:conv-apd-im-x-im-l}.
	\item If one only linearizes $f$ (when it is smooth or has smooth component $h$ such that $f = h+g$) , then the rate is $O(L/k^2)$, where $L$ denotes the Lipschitz constant of $\nabla f$ (or $\nabla h$); see the semi-implicit discretization \cref{eq:apd-ex-x-im-l-x-h-g} and \cref{thm:apd-ex-x-im-l-h-g}.
	\item If one only linearizes the augmented term $\nm{Ax-b}^2$ in \cref{eq:xk1}, then the rate becomes $O(\nm{A}/k)$; see another semi-implicit scheme \cref{eq:apd-im-x-ex-l-v} and \cref{thm:conv-apd-im-x-ex-l}.	
	\item If both $f$ and the augmented term are linearized, then the final convergence rate is $O((\nm{A}+\sqrt{L})/k)$; see the explicit discretization \cref{eq:apd-ex-x-ex-l-x-h-g} and \cref{thm:conv-apd-ex-x-ex-l-h-g}.
\end{itemize}
We note that, for convex case $\mu=0$, all of our methods listed above are close to those existing algorithms in \cite{he_convergence_2021,he_fast_2021,he_inertial_2021,li_convergence_2017,sabach_faster_2020,tran-dinh_constrained_2014,tran-dinh_primal-dual_2015,tran-dinh_smooth_2018,Xu2017}, and they share the corresponding nonergodic rates.
However, for strongly convex case $\mu>0$, the above three linearized methods can achieve faster convergence rates:
$O((1-\sqrt{\mu/L})^k)$, $O(\nm{A}^2/k^2)$, and $O((\nm{A}^2+L)/k^2)$, respectively. Particularly, in \cite{sabach_faster_2020,tran-dinh_constrained_2014,tran-dinh_primal-dual_2015,tran-dinh_smooth_2018}, the rate $O(\nm{A}^2/k^2)$ has been achieved with strongly convex objective. 

Both of the two methods \cref{eq:apd-im-x-ex-l-v,eq:apd-ex-x-ex-l-x-h-g} only involve the proximal calculation of $f$ (or its nonsmooth part $g$).
As for the implicit scheme \cref{eq:apd-im-x-im-l-v} and the semi-implicit discretization \cref{eq:apd-ex-x-im-l-x-h-g}, following the spirit from \cite{li_asymptotically_2020,luo_primal-dual_2021,niu_sparse_2021},  we can transform the related subproblems into some nonlinear equations (or linear SPD systems) with respect to the dual variable, and then develop efficient inner solvers, such as the SsN method (or the preconditioned conjugate gradient (PCG) iteration), provided that there has some additional special structure such as sparsity.

In this work, we have not considered the two block case \cref{eq:2b}, for which ADMM-type methods are more practical. Taking this into account, the implicit scheme \cref{eq:apd-im-x-im-l-l} and the semi-implicit one \cref{eq:apd-ex-x-im-l-x-h-g} can not be applied directly to \cref{eq:2b}. However, as byproducts, both the semi-implicit discretization \cref{eq:apd-im-x-ex-l-v} and the explicit one \cref{eq:apd-ex-x-ex-l-x-h-g} are available for \cref{eq:2b} and lead to linearized parallel ADMM-type methods; see more discussions in \cref{rem:jdmm}.
\section{Accelerated Primal-Dual Flow}
\label{sec:apd}
As a combination of the Nesterov accelerated gradient flow \cite{luo_accelerated_2021,luo_differential_2021} and the primal-dual flow 
\cite{luo_primal-dual_2021}, our accelerated primal-dual flow  reads as 
\begin{subnumcases}{}
	\theta\lambda'  ={}\nabla_\lambda \mathcal L_\beta(v,\lambda),
	\label{eq:apd-sys-lambda}\\
	x' = {}v-x,
	\label{eq:apd-sys-x}\\
	\gamma v'  \in{}\mu_\beta(x-v)-\partial_x \mathcal L_\beta(x,\lambda),
	\label{eq:apd-sys-v}
\end{subnumcases}
where  $\partial_x\mathcal L_\beta(x,\lambda)=\partial f_\beta(x)+N_{\mathcal X}(x)+A^{\top}\lambda$, $\theta$ and $\gamma$ are two built-in scaling factors governed respectively by
\begin{equation}\label{eq:theta-gamma}
	\theta' =-\theta,\quad 
	\gamma' ={}\mu_\beta-\gamma,
\end{equation}
with $\mu_\beta=\mu+\beta\sigma_{\min}^2(A)$ and the initial condition $(\theta(0),\gamma(0))=(\theta_0,\gamma_0)>0$. It is not hard to calculate explicit solution of \cref{eq:theta-gamma}:
\[
\theta(t) = {}\theta_0e^{-t},\quad 
\gamma(t) = {}\mu_\beta+(\gamma_0-\mu_\beta)e^{-t}.
\]
Therefore, both $\theta$ and $\gamma$ are positive and approach to 0 and $\mu_\beta$ respectively with exponential rate. In addition, we have that $\gamma_{\min}:=\min\{\mu_\beta,\gamma_0\}\leqslant \gamma(t)\leqslant \gamma_{\max}:=\max\{\mu_\beta,\gamma_0\}$ for all $t\geqslant 0$.
However for algorithm designing, we shall keep the differential equation formulation \cref{eq:theta-gamma} and treat $\theta$ and $\beta$ as unknowns.

For simplicity, in this section, we restrict ourselves to the smooth case: $\mathcal X  = \R^n$ and $f\in\mathcal S_{\mu,L}^{1,1}$, for which unique classical solution to \cref{eq:apd-sys-lambda} can be obtained easily since now $\mathcal L_\beta(x,\lambda)$ is linear with respect to $\lambda$ and $L_\beta$-Lipschitz continuous in terms of $x$. The general nonsmooth case $f\in\mathcal S_\mu^0(\mathcal X)$, however,
deserves further investigation on the solution existence in proper sense, which together the nonsmooth version of \cref{lem:dE}, is beyond the scope of this work.

Now, our APD flow \cref{eq:apd-sys-lambda} becomes
\begin{subnumcases}{}
	\theta\lambda'  ={}\nabla_\lambda \mathcal L_\beta(v,\lambda),
	\label{eq:apd-sys-lambda-smooth}\\
	x' = {}v-x,
	\label{eq:apd-sys-x-smooth}\\
	\gamma v'  ={}\mu_\beta(x-v)-\nabla_x \mathcal L_\beta(x,\lambda),
	\label{eq:apd-sys-v-smooth}
\end{subnumcases}
with initial condition  $(\lambda(0),x(0),v(0))=(\lambda_0,x_0,v_0)\in\R^m\times \R^n\times \R^n$. Applying standard well-posedness theory of ordinary differential equations implies that the system \cref{eq:apd-sys-lambda-smooth} admits a unique solution $(\lambda,x,v)\in C^2([0,\infty);\R^m)\times C^2([0,\infty);\R^n)\times C^1([0,\infty);\R^n)$.

Let us equip the system \cref{eq:apd-sys-lambda-smooth} with a suitable Lyapunov function 
\begin{equation}\label{eq:Et}
	\mathcal E(t) := \mathcal L_\beta(x(t),\lambda^*) - \mathcal L_\beta(x^*,\lambda(t)) +\frac{\gamma(t)}{2}\nm{v(t)- x^*}^2
	+\frac{\theta(t)}{2}\nm{\lambda(t)-\lambda^*}^2,
\end{equation}
where $0\leqslant t<\infty$.  The following lemma establishes the exponential decay of \cref{eq:Et}, which holds uniformly for $\mu\geqslant 0$.
\begin{lem}\label{lem:dE}
	Assume $f\in\mathcal S_{\mu,L}^{1,1}$ with $0\leqslant \mu\leqslant L<\infty$ and let $(\lambda,x,v)$ be the unique solution to \cref{eq:apd-sys-x-smooth}, then for $\mathcal E(t)$ defined by \cref{eq:Et}, it holds that
	\begin{equation}\label{eq:dE}
		\frac{\dd}{\dd t}\mathcal E(t)
		\leqslant -\mathcal E(t)
		-\frac{\mu_\beta}{2}\nm{x'(t)}^2,
	\end{equation}
	which implies 
	\begin{equation}\label{eq:exp-Et}
		\mathcal E(t)+\frac{\mu_\beta}{2}\int_{0}^{t}e^{s-t}
		\nm{x'(s)}^2{\rm d}s\leqslant 
		e^{-t}\mathcal E(0),\quad 0\leqslant t<\infty.
	\end{equation}
	Moreover, $\nm{Ax(t)-b}\leqslant e^{-t}\mathcal R_0$ and $\snm{f(x(t))-f(x^*)}\leqslant e^{-t}\big(\mathcal E(0)+\mathcal R_0\nm{\lambda^*}\big)$, where $\mathcal R_0=\sqrt{2\theta_0\mathcal E(0)}+\theta_0\nm{\lambda_0-\lambda^*}+\nm{Ax_0-b}$.
\end{lem}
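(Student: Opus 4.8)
The plan is to establish the differential inequality \cref{eq:dE} by direct differentiation, bootstrap it to the integral bound \cref{eq:exp-Et} via an integrating factor, and then read off the feasibility and objective estimates from the decay of $\mathcal E$. Since $\mathcal X=\R^n$ and $f\in\mathcal S_{\mu,L}^{1,1}$, we have $\mathcal L_\beta(x,\lambda)=f_\beta(x)+\dual{\lambda,Ax-b}$ with $\nabla_x\mathcal L_\beta(x,\lambda)=\nabla f_\beta(x)+A^{\top}\lambda$ and $\nabla_\lambda\mathcal L_\beta(x,\lambda)=Ax-b$, and $f_\beta\in\mathcal S_{\mu_\beta}^1$. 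First I would differentiate each of the four pieces of $\mathcal E(t)$ along \cref{eq:apd-sys-lambda-smooth}, substituting the scaling ODEs $\theta'=-\theta$ and $\gamma'=\mu_\beta-\gamma$ for the coefficient derivatives and the three state equations for $x'$, $\gamma v'$ and $\theta\lambda'$. The term $-\frac{\dd}{\dd t}\mathcal L_\beta(x^*,\lambda)=-\dual{Ax^*-b,\lambda'}$ vanishes because $Ax^*=b$, and this identity will be used repeatedly.

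Second, I would collect the resulting expression by type. The cross terms linear in the running multiplier cancel identically ($-\dual{\lambda,Av-b}$ from the $v$-equation against $\dual{Av-b,\lambda}$ from the $\theta$-term), and the terms linear in $\lambda^*$ combine, again via $Ax^*=b$, into exactly $-\dual{\lambda^*,Ax-b}$. Substituting the definition of $\mathcal E$ and cancelling the common $-\tfrac{\theta}{2}\nm{\lambda-\lambda^*}^2$ and $-\dual{\lambda^*,Ax-b}$, the claimed inequality \cref{eq:dE} reduces to the purely primal statement
\[
\dual{\nabla f_\beta(x),x^*-x}+f_\beta(x)-f_\beta(x^*)\leqslant -\mu_\beta\dual{x-v,v-x^*}-\tfrac{\mu_\beta}{2}\nm{v-x^*}^2-\tfrac{\mu_\beta}{2}\nm{v-x}^2 .
\]

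Third, I would close this in two moves. The $\mu_\beta$-strong convexity of $f_\beta$ yields $\dual{\nabla f_\beta(x),x^*-x}+f_\beta(x)-f_\beta(x^*)\leqslant -\tfrac{\mu_\beta}{2}\nm{x-x^*}^2$, so it remains to show $-\tfrac{\mu_\beta}{2}\nm{x-x^*}^2$ is bounded by the right-hand side. Writing $x-x^*=(x-v)+(v-x^*)$, the two quantities in fact coincide by the polarization identity $\nm{a+b}^2=\nm{a}^2+2\dual{a,b}+\nm{b}^2$. This exact cancellation—rather than a mere inequality—is the structural heart of the estimate, and recognizing that strong convexity combined with this splitting reproduces precisely the dissipation $-\tfrac{\mu_\beta}{2}\nm{x'}^2$ (using $x'=v-x$) is the step I expect to require the most care. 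The integral form \cref{eq:exp-Et} then follows by multiplying \cref{eq:dE} by $e^{t}$, so that $\frac{\dd}{\dd t}\big(e^{t}\mathcal E\big)\leqslant -\tfrac{\mu_\beta}{2}e^{t}\nm{x'}^2$, integrating on $[0,t]$ and multiplying back by $e^{-t}$.

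Finally, for the two pointwise estimates I would first treat the feasibility. Eliminating $v=x+x'$ in the dual equation $\theta\lambda'=Av-b$ and writing $r=Ax-b$ turns it into $\theta\lambda'=r+r'$; since $\theta=\theta_0e^{-t}$ this is exactly $\theta_0\lambda'=\frac{\dd}{\dd t}\big(e^{t}r\big)$, which integrates to $e^{t}r(t)=r(0)+\theta_0\big(\lambda(t)-\lambda_0\big)$. Each summand of $\mathcal E$ is nonnegative—the gap $\mathcal L_\beta(x,\lambda^*)-\mathcal L_\beta(x^*,\lambda)$ because $\mathcal L(x,\lambda^*)\geqslant\mathcal L(x^*,\lambda^*)=f(x^*)$ by the saddle-point property—so $\tfrac{\theta(t)}{2}\nm{\lambda(t)-\lambda^*}^2\leqslant\mathcal E(t)\leqslant e^{-t}\mathcal E(0)$ gives $\theta_0\nm{\lambda(t)-\lambda^*}\leqslant\sqrt{2\theta_0\mathcal E(0)}$; together with $\nm{\lambda(t)-\lambda_0}\leqslant\nm{\lambda(t)-\lambda^*}+\nm{\lambda_0-\lambda^*}$ this yields $\nm{Ax(t)-b}\leqslant e^{-t}\mathcal R_0$. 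For the objective I would combine the two sides of the same nonnegativity: the lower bound $f(x)-f(x^*)\geqslant -\dual{\lambda^*,Ax-b}\geqslant -e^{-t}\mathcal R_0\nm{\lambda^*}$, and the upper bound $f(x)-f(x^*)+\dual{\lambda^*,Ax-b}\leqslant\mathcal E(t)\leqslant e^{-t}\mathcal E(0)$ obtained by discarding the nonnegative quadratic terms, whence $\snm{f(x(t))-f(x^*)}\leqslant e^{-t}\big(\mathcal E(0)+\mathcal R_0\nm{\lambda^*}\big)$.
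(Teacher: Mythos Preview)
Your proposal is correct and follows essentially the same approach as the paper: differentiate $\mathcal E$, exploit $Ax^*=b$ to collapse the dual cross terms, and close the estimate via the $\mu_\beta$-strong convexity of $f_\beta$ together with the three-point (polarization) identity $\|x-x^*\|^2=\|x-v\|^2+2\dual{x-v,v-x^*}+\|v-x^*\|^2$, which is exactly the paper's \cref{eq:3id}. Your treatment of the feasibility and objective bounds is in fact more explicit than the paper's, which simply cites an analogous corollary for those final estimates.
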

\begin{proof}
	Notice that $\mathcal L_\beta(x^*,\lambda) = f(x^*)$ is a constant for all $\lambda$. This fact will also be used implicitly somewhere else.
	A direct computation gives
	\[
	\begin{aligned}
		\frac{\dd}{\dd t}\mathcal E(t) ={}&
		\dual{x',\nabla_x\mathcal L_\beta(x,\lambda^*)}
		+ \frac{\gamma'}{2}\nm{v-x^*}^2+\dual{\gamma v',v-x^*}
		+	\frac{\theta'}{2}\nm{\lambda-\lambda^*}^2	+\dual{\theta\lambda',\lambda-\lambda^*}.
	\end{aligned}
	\]
	In view of \cref{eq:theta-gamma,eq:apd-sys-x-smooth}, we replace all the derivatives with their right hand sides and obtain $\mathcal E'(t)=I_1+I_2$, where
	\[
	\begin{aligned}
		I_1:={}&	-\frac{\theta}{2}\nm{\lambda-\lambda^*}^2
		+ \frac{\mu_\beta-\gamma}{2}\nm{v-x^*}^2
		+\mu_\beta\dual{x-v,v-x^*},\\
		I_2:={}&\dual{\nabla_x\mathcal L_\beta(x,\lambda^*),v-x}-\dual{\nabla_x\mathcal L_\beta(x,\lambda),v-x^*}
		+\dual{\nabla_\lambda\mathcal L_\beta(v,\lambda),\lambda-\lambda^*}.
	\end{aligned}
	\]
	Recall the identity 
	\begin{equation}\label{eq:3id}
		\mu_\beta\dual{x-v,v-x^*} = \frac{\mu_\beta}{2}
		\left(
		\nm{x-x^*}^2-\nm{v-x^*}^2-\nm{v-x}^2
		\right),
	\end{equation}
	which is trivial but very useful in our later analysis. We rewrite $I_1$ as follows
	\begin{equation}\label{eq:eq-I1}
		I_1={}\frac{\mu_\beta}{2}\nm{x-x^*}^2-\frac{\gamma}{2}\nm{v-x^*}^2
		-\frac{\theta}{2}\nm{\lambda-\lambda^*}^2-\frac{\mu_\beta}{2}\nm{v-x}^2.
	\end{equation}
	Inserting the splitting 
	\[
	\dual{\nabla_x\mathcal L_\beta(x,\lambda^*),v-x} = 
	\dual{\nabla_x\mathcal L_\beta(x,\lambda^*),x^*-x}
	+	\dual{\nabla_x\mathcal L_\beta(x,\lambda^*),v-x^*}
	\]
	into $I_2$ and using $\nabla_x\mathcal L_\beta(x,\lambda^*)-\nabla_x\mathcal L_\beta(x,\lambda)
	=A^{\top}(\lambda^*-\lambda)$, we find
	\[
	I_2 = \dual{\nabla_x\mathcal L_\beta(x,\lambda^*),x^*-x}
	+\dual{A^{\top}(\lambda^*-\lambda),v-x^*}
	+\dual{\nabla_\lambda\mathcal L_\beta(v,\lambda),\lambda-\lambda^*}.
	\]
	Thanks to \eqref{eq:apd-sys-lambda-smooth} and the optimality condition \cref{eq:KKT}, i.e., $Ax^* = b$, the 
	sum of last two terms vanishes. Hence, it follows from the fact $\mathcal L_\beta(\cdot,\lambda^*)\in\mathcal S_{\mu_\beta}^1$ that (cf. \cref{eq:def-mu})
	\[
	\begin{aligned}
		I_2	 	\leqslant {}&\mathcal L_\beta(x^*,\lambda^*)	-\mathcal L_\beta(x,\lambda^*)	
		-\frac{\mu_\beta}{2}\nm{x-x^*}^2
		={}\mathcal L_\beta(x^*,\lambda)	-\mathcal L_\beta(x,\lambda^*)	
		-\frac{\mu_\beta}{2}\nm{x-x^*}^2.	
	\end{aligned}
	\]
	Now, in view of $x'=v-x$, collecting the above estimate and \cref{eq:eq-I1} implies \cref{eq:dE}. 
	
	From \cref{eq:dE} follows \cref{eq:exp-Et}, and analogous to \cite[Corollary 2.1]{luo_primal-dual_2021}, it is not hard to establish the exponential decay estimates of the feasibility violation $\nm{Ax(t)-b}$ and the primal objective residual $\snm{f(x(t))-f(x^*)}$. Consequently, this completes the proof of this lemma.
\end{proof}

\section{The Implicit Discretization}
\label{sec:apd-im}
From now on, we arrive at the discrete level and will consider several 
numerical discretizations for the APD flow system \cref{eq:apd-sys-lambda}. Those differential equation solvers mainly include an implicit Euler scheme \cref{eq:apd-im-x-im-l-v}, two semi-implicit schemes (cf. \cref{eq:apd-im-x-ex-l-v,eq:apd-ex-x-im-l-x-h-g}) and an explicit scheme \cref{eq:apd-ex-x-ex-l-x-h-g}, and are transformed into primal-dual algorithms for the original affine constrained convex optimization problem \cref{eq:min-f-X-Ax-b}. Nonergodic convergence rates will also be established via a unified discrete Lyapunov function.

In this section, let us start with the fully implicit Euler method:
\begin{subnumcases}{}
	\theta_k \frac{\lambda_{k+1}-\lambda_k}{\alpha_k} = {} \nabla_\lambda \mathcal L_\beta(v_{k+1},\lambda_{k+1}),\label{eq:apd-im-x-im-l-l}\\
	\frac{x_{k+1}-x_{k}}{\alpha_k}={}v_{k+1}-x_{k+1},\label{eq:apd-im-x-im-l-x}\\
	\gamma_k \frac{v_{k+1}-v_k}{\alpha_k} \in{}\mu_\beta(x_{k+1}-v_{k+1}) -\partial_x\mathcal L_\beta\left(x_{k+1}, \lambda_{k+1}\right),
	\label{eq:apd-im-x-im-l-v}
\end{subnumcases}
with initial guess $(x_0,v_0)\in\mathcal X\times \R^n$.
The scaling parameter system \cref{eq:theta-gamma} is discretized implicitly as follows
\begin{equation}\label{eq:apd-im-gamma-theta}
	\frac{\theta_{k+1}-\theta_{k}}{\alpha_k}=-\theta_{k+1},\quad 	
	\frac{\gamma_{k+1}-\gamma_{k}}{\alpha_k}={}\mu_\beta-\gamma_{k+1},
\end{equation}
with $\theta_0=1$ and $\gamma_0>0$. This will be used in all the forthcoming methods.

Before the convergence analysis, let us have a look at the solvability. 
By \eqref{eq:apd-im-x-im-l-x}, express $v_{k+1}$ in terms of $x_{k+1}$ and $x_k$ and plug it into  \eqref{eq:apd-im-x-im-l-l} and \eqref{eq:apd-im-x-im-l-v} to obtain
\begin{subnumcases}{}
	\lambda_{k+1}={}\lambda_k-\frac{1}{\theta_{k}}(Ax_k-b)+\frac{1}{\theta_{k+1}}(Ax_{k+1}-b),
	\label{eq:apd-im-x-im-l-lk1}\\
	x_{k+1}
	\in{}y_k
	-\eta_k\left(\partial f_\beta(x_{k+1})+N_{\mathcal X}(x_{k+1})+A^{\top}\lambda_{k+1}\right),
	\label{eq:apd-im-x-im-l-xk1}
\end{subnumcases}
where $\eta_k=\alpha_k^2/\tau_k$ and
\begin{equation}\label{eq:im-x-im-l-tauk-yk}
	\tau_k:={}\gamma_k+\mu_\beta\alpha_k+\gamma_k\alpha_k,\quad
	y_k:={} \tau_k^{-1}
	\left((\gamma_k+\mu_\beta\alpha_k)x_k+\gamma_k\alpha_kv_k\right).
\end{equation}
Eliminating $\lambda_{k+1}$ from \cref{eq:apd-im-x-im-l-xk1} to get
\begin{equation}
	\label{eq:apd-im-x-im-l-xk1-argmin}
	x_{k+1}
	=\mathop{ \argmin}\limits_{x\in\mathcal X}
	\left\{
	f_\beta(x)+\frac{1}{2\theta_{k+1}}\nm{Ax-b}^2
	+\frac{1}{2\eta_k}\nm{x-w_k}^2
	\right\},
\end{equation}
where $w_k :=y_k
-\eta_kA^{\top}\left(
\lambda_k-\theta_{k}^{-1}(Ax_k-b)\right)$. We note that except the augmented term in $f_\beta$, the quadratic penalty term $\ell_{\theta_{k+1}}(x)= 1/(2\theta_{k+1})\nm{Ax-b}^2 $ in \cref{eq:apd-im-x-im-l-xk1-argmin} comes from the implicit choice $\lambda_{k+1}$ in \eqref{eq:apd-im-x-im-l-v}, since it is coupled with $x_{k+1}$. If we drop that penalty term, then \cref{eq:apd-im-x-im-l-xk1-argmin} is very close to the classical proximal ALM. Clearly, we have $\{x_k\}\subset \mathcal X$ and once we get $x_{k+1}\in\mathcal X$ from \cref{eq:apd-im-x-im-l-xk1-argmin}, both $v_{k+1}$ and $\lambda_{k+1}$ are obtained sequentially. 

In addition, if $\beta=0$, then we may utilize the hidden structure of \cref{eq:apd-im-x-im-l-lk1} to solve it more efficiently. Indeed, by \eqref{eq:apd-im-x-im-l-xk1}, it follows that $x_{k+1} = \prox^{\mathcal X}_{\eta_kf}(y_k-\eta_k A^{\top}\lambda_{k+1})$,
which together with \eqref{eq:apd-im-x-im-l-lk1} gives
\begin{equation}
	\label{eq:apd-im-x-im-l-lk1-nonlinear}
	\theta_{k+1}\lambda_{k+1}-A\prox^{\mathcal X}_{\eta_kf}(y_k-\eta_k A^{\top}\lambda_{k+1})={}\theta_{k+1}
	\left(\lambda_k-\theta_{k}^{-1}(Ax_k-b)\right)-b.
\end{equation}
According to \cref{sec:SsN}, such a nonlinear equation may be solved via the SsN method (\cref{algo:SsN}). We stop the discussion here and put some remarks at the end of this section.

For convergence analysis, we introduce a tailored Lyapunov function
\begin{equation}\label{eq:Ek}
	\mathcal E_k:=
	\mathcal L_\beta(x_k,\lambda^*)-\mathcal L_\beta(x^*,\lambda_k)
	+\frac{\gamma_k}{2}\nm{v_k-x^*}^2
	+\frac{\theta_k}{2}\nm{\lambda_k-\lambda^*}^2,\quad k\in\mathbb N,
\end{equation}
which matches the discrete version of \cref{eq:Et}. 
\begin{thm}\label{thm:conv-apd-im-x-im-l}
	Assume $f\in\mathcal S_{\mu}^0(\mathcal X)$ with $\mu\geqslant 0$. 
	Then for the fully implicit scheme \cref{eq:apd-im-x-im-l-l} with $(x_0,v_0)\in\mathcal X\times \R^n$ and any $\alpha_k>0$, we have $\{x_k\}\subset \mathcal X$ and 
	\begin{equation}\label{eq:diff-Ek-im-x-im-l}
		\mathcal E_{k+1}-	\mathcal E_{k}
		\leqslant -\alpha_k		\mathcal E_{k+1}, \quad\text{for all~}k\in\mathbb N.
	\end{equation}
	Moreover, there holds that
	\begin{subnumcases}
		{}\nm{Ax_{k}-b}\leqslant \theta_k\mathcal R_0,\label{eq:conv-Axk-b-im-x-im-l}\\
		0\leqslant \mathcal L(x_{k},\lambda^*)-	\mathcal L(x^*,\lambda_{k})
		\leqslant \theta_k\mathcal E_0,
		\label{eq:conv-Lk-im-x-im-l}
		\\	
		{}\snm{f(x_k)-f(x^*)}
		\leqslant  \theta_k\left(\mathcal E_0+\mathcal R_0\nm{\lambda^*}\right),
		\label{eq:conv-fk-im-x-im-l}
	\end{subnumcases}
	where $\theta_k = \prod_{i=0}^{k-1}\frac{1}{1+\alpha_i}$ and 
	\begin{equation}\label{eq:R0}
		\mathcal R_0:=
		\sqrt{2\mathcal E_0}+
		\nm{\lambda_0-\lambda^*}+\nm{Ax_0-b}
	\end{equation}
\end{thm}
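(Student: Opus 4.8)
The plan is to replay the continuous Lyapunov computation of \cref{lem:dE} at the discrete level, where the implicit discretization will generate extra nonpositive remainders rather than forcing any step-size restriction, consistent with the claim that every $\alpha_k>0$ is admissible. First I would settle solvability: the subproblem \cref{eq:apd-im-x-im-l-xk1-argmin} minimizes a strongly convex objective over $\mathcal X$, where the proximal term $\frac{1}{2\eta_k}\nm{x-w_k}^2$ guarantees coercivity and a unique minimizer $x_{k+1}\in\mathcal X$; then $v_{k+1}$ and $\lambda_{k+1}$ are recovered from \cref{eq:apd-im-x-im-l-x,eq:apd-im-x-im-l-l}, giving $\{x_k\}\subset\mathcal X$. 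The optimality condition of \cref{eq:apd-im-x-im-l-xk1-argmin} simultaneously furnishes an explicit subgradient $p_{k+1}\in\partial_x\mathcal L_\beta(x_{k+1},\lambda_{k+1})$ realizing \cref{eq:apd-im-x-im-l-v}, which I will use throughout. Note that only convexity (not smoothness) of $f$ is needed here, which is why the implicit scheme covers the full nonsmooth class $\mathcal S_\mu^0(\mathcal X)$.

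The core is the contraction \cref{eq:diff-Ek-im-x-im-l}, which I would establish in the equivalent form $(1+\alpha_k)\mathcal E_{k+1}\leqslant \mathcal E_k$. Expanding $(1+\alpha_k)\mathcal E_{k+1}-\mathcal E_k$, I would first rewrite the recursions \cref{eq:apd-im-gamma-theta} as $(1+\alpha_k)\theta_{k+1}=\theta_k$ and $(1+\alpha_k)\gamma_{k+1}=\gamma_k+\alpha_k\mu_\beta$ to factor the quadratic potentials, then apply the backward three-point identity $\nm{u_{k+1}-u^*}^2-\nm{u_k-u^*}^2=2\dual{u_{k+1}-u_k,u_{k+1}-u^*}-\nm{u_{k+1}-u_k}^2$ to the $\lambda$- and $v$-potentials; this is the discrete source of the stabilizing terms $-\frac{\theta_k}{2}\nm{\lambda_{k+1}-\lambda_k}^2$ and $-\frac{\gamma_k}{2}\nm{v_{k+1}-v_k}^2$. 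Substituting \cref{eq:apd-im-x-im-l-l} (using $\nabla_\lambda\mathcal L_\beta(v_{k+1},\lambda_{k+1})=A(v_{k+1}-x^*)$ after $Ax^*=b$) and \cref{eq:apd-im-x-im-l-v} into the resulting inner products produces a cross term $\pm\alpha_k\dual{A^{\top}(\lambda_{k+1}-\lambda^*),v_{k+1}-x^*}$ that cancels between the $\theta$- and $\gamma$-contributions, exactly as in the continuous proof. Writing $p_{k+1}=q_{k+1}+A^{\top}(\lambda_{k+1}-\lambda^*)$ with $q_{k+1}\in\partial_x\mathcal L_\beta(x_{k+1},\lambda^*)$, splitting $v_{k+1}-x^*=(x_{k+1}-x^*)+(v_{k+1}-x_{k+1})$ via \cref{eq:apd-im-x-im-l-x}, and invoking $\mu_\beta$-strong convexity of $\mathcal L_\beta(\cdot,\lambda^*)$ tested at both $x^*$ and $x_k$, the objective-gap contributions $\alpha_k\big(\mathcal L_\beta(x_{k+1},\lambda^*)-f(x^*)\big)$ and $\mathcal L_\beta(x_{k+1},\lambda^*)-\mathcal L_\beta(x_k,\lambda^*)$ cancel against the two convexity lower bounds, leaving only $\mu_\beta$-weighted quadratics. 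Collapsing these by the identity \cref{eq:3id} reduces the whole expression to $-\tfrac{\alpha_k\mu_\beta}{2}\nm{v_{k+1}-x_{k+1}}^2-\tfrac{\mu_\beta}{2}\nm{x_{k+1}-x_k}^2-\tfrac{\theta_k}{2}\nm{\lambda_{k+1}-\lambda_k}^2-\tfrac{\gamma_k}{2}\nm{v_{k+1}-v_k}^2\leqslant 0$, which is \cref{eq:diff-Ek-im-x-im-l}. Orchestrating these cancellations — in particular aligning the cross term and the two strong-convexity applications so that all nonquadratic pieces vanish — is the main obstacle; everything else is bookkeeping.

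Finally I would extract the rates. Telescoping $(1+\alpha_k)\mathcal E_{k+1}\leqslant\mathcal E_k$ with $\theta_k=\prod_{i=0}^{k-1}(1+\alpha_i)^{-1}$ yields $\mathcal E_k\leqslant\theta_k\mathcal E_0$, and since $x^*$ minimizes $\mathcal L_\beta(\cdot,\lambda^*)$ over $\mathcal X$ (by \cref{eq:KKT} together with $Ax^*=b$), each $\mathcal E_k\geqslant 0$. Estimate \cref{eq:conv-Lk-im-x-im-l} then follows from $0\leqslant\mathcal L(x_k,\lambda^*)-\mathcal L(x^*,\lambda_k)\leqslant\mathcal L_\beta(x_k,\lambda^*)-\mathcal L_\beta(x^*,\lambda_k)\leqslant\mathcal E_k$, the nonnegativity coming from the saddle-point property. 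For the feasibility bound \cref{eq:conv-Axk-b-im-x-im-l} I would exploit the hidden telescoping in \cref{eq:apd-im-x-im-l-lk1}: the quantity $\lambda_k-\theta_k^{-1}(Ax_k-b)$ is invariant in $k$, so $Ax_k-b=\theta_k\big(\lambda_k-\lambda_0+Ax_0-b\big)$, and bounding $\nm{\lambda_k-\lambda^*}\leqslant\sqrt{2\mathcal E_0}$ from the $\theta$-potential gives $\nm{Ax_k-b}\leqslant\theta_k\mathcal R_0$. The objective residual \cref{eq:conv-fk-im-x-im-l} then follows from $f(x_k)-f(x^*)=\big[\mathcal L(x_k,\lambda^*)-\mathcal L(x^*,\lambda_k)\big]-\dual{\lambda^*,Ax_k-b}$ combined with the two preceding bounds and Cauchy--Schwarz.
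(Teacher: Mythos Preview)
Your proposal is correct and follows essentially the same route as the paper: the same three-point identities on the $\lambda$- and $v$-potentials, the same cross-term cancellation via $\nabla_\lambda\mathcal L_\beta(v_{k+1},\lambda_{k+1})=Av_{k+1}-b$, the same split $\alpha_k(v_{k+1}-x^*)=\alpha_k(x_{k+1}-x^*)+(x_{k+1}-x_k)$ from \cref{eq:apd-im-x-im-l-x} feeding two applications of $\mu_\beta$-strong convexity, the identity \cref{eq:3id}, and the invariance of $\lambda_k-\theta_k^{-1}(Ax_k-b)$ for the feasibility bound. The only cosmetic difference is that you organize the computation as $(1+\alpha_k)\mathcal E_{k+1}-\mathcal E_k\leqslant 0$ (exploiting $(1+\alpha_k)\theta_{k+1}=\theta_k$ and $(1+\alpha_k)\gamma_{k+1}=\gamma_k+\alpha_k\mu_\beta$ up front), whereas the paper tracks $\mathcal E_{k+1}-\mathcal E_k$ and extracts $-\alpha_k\mathcal E_{k+1}$ at the end.
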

\begin{proof}
	Mimicking the proof of \cref{lem:dE}, we replace the derivative with the difference 
	$	\mathcal E_{k+1}-\mathcal E_{k} = I_1+I_2+I_3$, where
	\begin{equation}\label{eq:I1-I2-I3}
		\left\{
		\begin{aligned}
			I_1	:={}&\mathcal L_\beta\left(x_{k+1},\lambda^*\right)
			-\mathcal L_\beta\left(x_{k},\lambda^*\right),\\
			I_2:={}&\frac{\theta_{k+1}}{2}
			\nm{\lambda_{k+1}-\lambda^*}^2 - 
			\frac{\theta_{k}}{2}
			\nm{\lambda_{k}-\lambda^*}^2,\\
			I_3:=	{}&\frac{\gamma_{k+1}}{2}
			\nm{v_{k+1}-x^*}^2 - 
			\frac{\gamma_{k}}{2}
			\nm{v_{k}-x^*}^2.
		\end{aligned}
		\right.
	\end{equation}
	
	Let us set the first term $I_1$ aside and consider the estimates for $I_2$ and $I_3$. 
	For a start, 
	by the equation of $\{\theta_k\}$ in \eqref{eq:apd-im-gamma-theta}, an evident calculation yields that
	\begin{equation}
		\label{eq:I1-im-x-im-l}
		\begin{aligned}
			I_2=&\frac{\theta_{k+1}-\theta_{k}}{2}\nm{\lambda_{k+1}-\lambda^*}^2
			+\frac{\theta_{k}}{2}
			\left(\nm{\lambda_{k+1}-\lambda^*}^2 - 
			\nm{\lambda_{k}-\lambda^*}^2\right)\\
			=&	-\frac{\alpha_k\theta_{k+1}}{2}\nm{\lambda_{k+1}-\lambda^*}^2
			-\frac{\theta_{k}}{2}\nm{\lambda_{k+1}-\lambda_k}^2
			+\theta_{k}\dual{\lambda_{k+1}-\lambda_k,\lambda_{k+1}-\lambda^*}.
		\end{aligned}
	\end{equation}
	According to \eqref{eq:apd-im-x-im-l-l}, we rewrite the last cross term in \cref{eq:I1-im-x-im-l} and obtain
	\begin{equation}
		\label{eq:I1-im-x-im-l-simple}
		\begin{aligned}
			I_2=&-\frac{\alpha_k\theta_{k+1}}{2}\nm{\lambda_{k+1}-\lambda^*}^2
			-\frac{\theta_{k}}{2}\nm{\lambda_{k+1}-\lambda_{k}}^2
			+\alpha_{k}\dual{Av_{k+1}-b,\lambda_{k+1}-\lambda^*}.
		\end{aligned}
	\end{equation}
	Similarly, by \eqref{eq:apd-im-gamma-theta}, the term $I_3$ admits the decomposition
	\begin{equation}\label{eq:I2-1-im-x-ex-l}
		\begin{aligned}
			I_3={}&
			\frac{\alpha_k(\mu_\beta-\gamma_{k+1})}{2}
			\nm{v_{k+1}-x^*}^2-	\frac{\gamma_{k}}2\nm{v_{k+1}-v_k}^2
			+\gamma_{k}
			\dual{v_{k+1}-v_k, v_{k+1} -x^*}.
		\end{aligned}
	\end{equation}
	In view of \eqref{eq:apd-im-x-im-l-v}, it is not hard to find
	\[
	\gamma_{k}(v_{k+1}-v_k) = \mu_\beta\alpha_k(x_{k+1} - v_{k+1}) - \alpha_k\left(\xi_{k+1}+A^{\top}\lambda_{k+1}\right),
	\]
	where $\xi_{k+1}\in\partial f_\beta(x_{k+1})+N_{\mathcal X}(x_{k+1})$. Hence, $I_3$ can be further  expanded by that
	
	\begin{equation}\label{eq:I2-2-im-x-ex-l}
		\begin{aligned}
			I_3
			= {}&\mu_\beta\alpha_k\dual{x_{k+1} - v_{k+1}, v_{k+1} - x^{*}}
			-
			\alpha_k \dual{ \xi_{k+1}+A^{\top}\lambda^*, v_{k+1} - x^{*}}\\
			{}&\quad+	\frac{\alpha_k(\mu_\beta-\gamma_{k+1})}{2}
			\nm{v_{k+1}-x^*}^2		-	\frac{\gamma_{k}}2\nm{v_{k+1}-v_k}^2
			-\alpha_k\dual{Av_{k+1}-b,\lambda_{k+1}-\lambda^*},
		\end{aligned}
	\end{equation}
	where the last term in the above equality offsets the last term in \cref{eq:I1-im-x-im-l-simple}. By \cref{eq:3id}, the first cross term in \cref{eq:I2-2-im-x-ex-l} is rewritten as follows
	\begin{equation}\label{eq:cross}
		\begin{aligned}
			{}&			2\dual{x_{k+1} - v_{k+1}, v_{k+1} - x^{*}}=
			\left\|x_{k+1}-x^{*}\right\|^{2}
				-\|x_{k+1}-v_{k+1}\|^{2}
				-\left\|v_{k+1}-x^{*}\right\|^{2}.
		\end{aligned}
	\end{equation}
	Observing  \eqref{eq:apd-im-x-im-l-x}, we split the second cross term in \cref{eq:I2-2-im-x-ex-l} and get
	\[
	\begin{aligned}
		{}&
		-\alpha_k \dual{\xi_{k+1}+A^{\top}\lambda^*, v_{k+1} - x^{*}} 	
		={}
		-	\dual{ \xi_{k+1}+A^{\top}\lambda^*,x_{k+1}-x_k} 	-\alpha_k\dual{ \xi_{k+1}+A^{\top}\lambda^*, x_{k+1}-x^*},
	\end{aligned}
	\]
	By the fact that $\mathcal L_\beta(\cdot,\lambda^*)\in\mathcal S_{\mu_\beta}^{0}(\mathcal X)$ and $\xi_{k+1}+A^{\top}\lambda^*\in\partial_x\mathcal L_\beta(x_{k+1},\lambda^*)$, we obtain
	\begin{equation}\label{eq:est-mid-I3}
		\begin{aligned}
			-\alpha_k \dual{\xi_{k+1}+A^{\top}\lambda^*, v_{k+1} - x^{*}} 
			\leqslant {}&\mathcal L_\beta(x_k,\lambda^*)-\mathcal L_\beta(x_{k+1},\lambda^*)
			-\frac{\mu_\beta\alpha_k}{2}\nm{x_{k+1}-x^*}^2\\
			{}&\quad+\alpha_k\left(\mathcal L_\beta(x^*,\lambda^*)-\mathcal L_\beta(x_{k+1},\lambda^*)\right).
		\end{aligned}
	\end{equation}
	Note that the first term in \cref{eq:est-mid-I3} nullifies $I_1$ exactly.
	We find, after rearranging terms and dropping the surplus negative square term $-\nm{x_{k+1}-v_{k+1}}^2$, that
	\begin{equation}\label{eq:I1+I2+I3-im-x-im-l}
		\begin{aligned}
			\mathcal E_{k+1}-	\mathcal E_{k}\leqslant &-\alpha_{k}	\mathcal E_{k+1}
			-\frac{\theta_{k}}{2}\nm{\lambda_{k+1}-\lambda_{k}}^2
			-	\frac{\gamma_{k}}2\nm{v_{k+1}-v_k}^2,
		\end{aligned}
	\end{equation}
	which implies \cref{eq:diff-Ek-im-x-im-l} immediately.
	
	By the equation of $\{\theta_k\}$ in \cref{eq:apd-im-gamma-theta}, we have $\theta_k = \prod_{i=0}^{k-1}\frac{1}{1+\alpha_i}$, and from \eqref{eq:diff-Ek-im-x-im-l} follows  $	\mathcal E_k
	\leqslant \theta_k\mathcal E_0$, which promises \eqref{eq:conv-Lk-im-x-im-l}. So it is enough to establish
	\cref{eq:conv-Axk-b-im-x-im-l}.
	By \eqref{eq:apd-im-x-im-l-l}, we find
	\begin{equation}\label{eq:lk1-lk}
		\lambda_{k+1} 
		={}\lambda_k -\frac{1}{\theta_{k}}(Ax_k-b)+\frac{1}{\theta_{k+1}}(Ax_{k+1}-b).
	\end{equation}
	Whence, it follows that
	\begin{equation}\label{eq:lk-l0}
		\lambda_k-\frac1{\theta_k}(Ax_k-b) 
		= \lambda_0-(Ax_0-b),
		\quad k\in\mathbb N,
	\end{equation}
	which implies the inequality
	\[
	\begin{aligned}\nm{Ax_k-b}={}&\theta_k\nm{\lambda_k-\lambda_0
			+(Ax_0-b)}
		\leqslant{}
		\theta_k\nm{\lambda_k-\lambda_0}
		+\theta_k\nm{Ax_0-b}.
	\end{aligned}
	\]
	Thanks to the estimate $	\mathcal E_k
	\leqslant \theta_k\mathcal E_0$, we have $\nm{\lambda_k-\lambda^*}^2
	\leqslant 2\mathcal E_0$ and moreover,
	\[
	\begin{aligned}\nm{Ax_k-b}
		\leqslant{}& \theta_k\nm{\lambda_k-\lambda^*}+\theta_k\nm{\lambda_0-\lambda^*}
		+\theta_k\nm{Ax_0-b}
		\leqslant \theta_k\mathcal R_0,
	\end{aligned}
	\]
	which proves \eqref{eq:conv-Axk-b-im-x-im-l}. In addition, it is clear that
	\[
	\begin{aligned}
		0\leqslant 	\mathcal L(x_k,\lambda^*)-\mathcal L(x^*,\lambda_k)
		={}&f(x_k)-f(x^*)+\dual{\lambda^*,Ax_k-b}
		\leqslant{}
		\mathcal L_\beta(x_k,\lambda^*)-
		\mathcal L_\beta(x^*,\lambda_k)\leqslant \theta_k
		\mathcal{E}_0,
	\end{aligned}
	\]
	and thus there holds
	\[
	\begin{aligned}
		\snm{f(x_k)-f(x^*)}\leqslant	{}& 
		\snm{\dual{\lambda^*,Ax_k-b}}+
		\theta_k
		\mathcal{E}_0
		\leqslant 	{}\theta_k\left(
		\mathcal{E}_0+\nm{\lambda^*}\mathcal R_0
		\right).
	\end{aligned}
	\]
	This establishes \eqref{eq:conv-fk-im-x-im-l} and finishes the proof of this theorem.
\end{proof}

To the end, let us make some final remarks on the implicit discretization \cref{eq:apd-im-x-im-l-l}.
First of all, the augmented term $\beta/2\nm{Ax-b}^2$ in $f_\beta$ is different from the penalty term $\ell_{\theta_{k+1}}(x)=1/(2\theta_{k+1})\nm{Ax-b}^2$ in \cref{eq:apd-im-x-im-l-xk1-argmin}. The latter is mainly due to the implicit discretization of $\lambda$ in $\partial_x\mathcal L_\beta(x,\lambda)$, which is coupled with $v$ and therefore $x$, by \eqref{eq:apd-im-x-im-l-l} and \eqref{eq:apd-im-x-im-l-x}. The former makes sense only in the case that $\sigma_{\min}(A)>0$, which brings strong convexity to $f_\beta$ and promises $\mu_\beta=\mu+\beta\sigma_{\min}^2(A)>0$ even if $f$ is only convex (i.e., $\mu=0$). However, $\sigma_{\min}(A)>0$ means $A$ has full column rank. We are not assuming that this must be true throughout the paper but just want to be benefit from this situation. On the other hand, \cref{thm:conv-apd-im-x-im-l} implies the convergence rate has nothing to do with $\mu $ and $\mu_\beta$. Hence, for the implicit Euler method \cref{eq:apd-im-x-im-l-v}, there is no need to call these two parameters. Below, we summarize \cref{eq:apd-im-x-im-l-v} in \cref{algo:Im-APD} by setting $\mu=0$ and $\beta=0$. 
\begin{algorithm}[H]
	\caption{Implicit APD method for \cref{eq:min-f-X-Ax-b} with $f\in\mathcal S_{0}^{0}(\mathcal X)$.}
	\label{algo:Im-APD}
	\begin{algorithmic}[1] 
		\REQUIRE  $\theta_0=1,\,\gamma_0>0,\, (x_0,v_0)\in\mathcal X\times \R^n,\,\lambda_0\in\R^m$.
		\FOR{$k=0,1,\ldots$}
		\STATE Choose step size $\alpha_k>0$.
		\STATE Update $\displaystyle \theta_{k+1}= \theta_k/(1+\alpha_k)$ and compute $\gamma_k=\theta_k\gamma_0$.
		\STATE Solve $(\lambda_{k+1},x_{k+1})$ from \cref{eq:apd-im-x-im-l-lk1} with $\beta=0$ and $\mu_\beta=0$. This reduces to either \cref{eq:apd-im-x-im-l-xk1-argmin} or \cref{eq:apd-im-x-im-l-lk1-nonlinear}.
		\STATE Update $\displaystyle v_{k+1} =x_{k+1}+(x_{k+1}-x_k)/\alpha_k$. 
		\ENDFOR
	\end{algorithmic}
\end{algorithm}

Secondly, it is not surprising to see the unconditional contraction \cref{eq:diff-Ek-im-x-im-l}, which corresponds to the continuous case \cref{eq:dE}. 
In other words, fully implicit scheme is more likely to inherit core properties, such as exponential decay and time scaling, from the continuous level. Indeed, the exponential decay $O(e^{-t})$ in \cref{eq:exp-Et} is nothing but the time scaling effect, and it has been maintained by \cref{eq:apd-im-x-im-l-x} since we have no restriction on the step size $\alpha_k$. This can also be observed from \cite{he_convergence_2021,he_fast_2021,luo_primal-dual_2021}, and even for unconstrained problems  \cite{attouch_fast_2019,chen_first_2019,luo_differential_2021}. 
If $\alpha_k\geqslant \alpha_{\min}>0$, then the linear rate $(1+\alpha_{\min})^{-k}$ follows, and if we choose $\tau_k = \alpha_k^2$, then by \cref{eq:est-1}, we have the sublinear rate $O(1/k^2)$.

Thirdly, one may observe the relation \cref{eq:lk-l0}, which allows us to drop the sequence $\{\lambda_k\}$ and simplify \cref{algo:Im-APD}. This particular feature exists in all the forthcoming algorithms, and thus they can be simplified as possible as we can. But dropping $\{\lambda_k\}$ means we shall solve $x_{k+1}$ from the inner problem \cref{eq:apd-im-x-im-l-xk1-argmin}, which calls the proximal calculation of $f_{\beta}+\ell_{\theta_{k+1}}$ over $\mathcal X$. In some cases, it would be better to keep $\{\lambda_k\}$ as it is and consider the inner problem with $\lambda_{k+1}$, as discussed before on \cref{eq:apd-im-x-im-l-lk1-nonlinear}, which can be solved via the SsN method if $\prox_{\eta f}^{\mathcal X}$ is semi-smooth and has special structure. 
However, no matter which subproblem, proximal calculation of $f_{\beta}$ or $f$ may not be easy, especially for the composite case $f = h+g$. 

Finally, the implicit scheme \cref{eq:apd-im-x-im-l-l}, as well as the semi-implicit one \cref{eq:apd-ex-x-im-l-x-h-g}, can not lead to ADMM-type methods when applied to the two block case \cref{eq:2b}, since the augmented term still exists (even for $\beta=0$) and it makes $x_1$ and $x_2$ coupled with each other. However, for \cref{eq:apd-im-x-ex-l-v} and \cref{eq:apd-ex-x-ex-l-x-h-g}, they lead to linearized parallel ADMM-type methods; see \cref{rem:jdmm}.

Nevertheless, we shall emphasis that, the implicit scheme \cref{eq:apd-im-x-im-l-x} renders us some useful aspects. Nonergodic convergence rates analysis of all the forthcoming algorithms are followed from it and based on the unified Lyapunov function \cref{eq:Ek}. Also, it motivates us to consider semi-implicit and explicit discretizations, which bring linearization and lead to better primal-dual algorithms.

\section{A Semi-implicit Discretization}
\label{sec:apd-ex-x-im-l}
As we see, the implicit choice $\lambda_{k+1}=\lambda_k+\alpha_k/\theta_k(Av_{k+1}-b)$ in \eqref{eq:apd-im-x-im-l-v} makes $x_{k+1}$ and $\lambda_{k+1}$ coupled with each other. It is natural to consider the explicit one
\begin{equation}\label{eq:barl-k}
	\whk=
	\lambda_k+\frac{\alpha_k}{\theta_k}\left(
	Av_k-b
	\right),
\end{equation}
which gives a semi-implicit discretization
\begin{subnumcases}{}
	\theta_k \frac{\lambda_{k+1}-\lambda_k}{\alpha_k} = {} \nabla_\lambda \mathcal L_\beta(v_{k+1},\lambda_{k+1}),\label{eq:apd-im-x-ex-l-l}\\
	\frac{x_{k+1}-x_{k}}{\alpha_k}={}v_{k+1}-x_{k+1},\label{eq:apd-im-x-ex-l-x}\\
	\gamma_k \frac{v_{k+1}-v_k}{\alpha_k} \in{}\mu_\beta(x_{k+1}-v_{k+1}) -\partial_x\mathcal L_\beta(x_{k+1}, \whk).
	\label{eq:apd-im-x-ex-l-v}
\end{subnumcases}

Being different from $\lambda_{k+1} $, the explicit choice \cref{eq:barl-k} brings the gap $A(v_{k+1}-v_k)$, which can be controlled by  the additional negative term $-\nm{v_{k+1}-v_k}^2$ in \cref{eq:I1+I2+I3-im-x-im-l}.
Again, the initial guess is given by $(x_0,v_0)\in\mathcal X\times \R^n$, and the parameter system \cref{eq:theta-gamma} is still 
discretized by \cref{eq:apd-im-gamma-theta}.

Recall that $\gamma_{\min} = \min\{\mu_\beta,\gamma_{0}\}$ and $\gamma_{\max} = \max\{\mu_\beta,\gamma_{0}\}$. Moreover, from \cref{eq:apd-im-gamma-theta}, it is not hard to conclude that 
\[
\gamma_{\min}\leqslant \gamma_k\leqslant \gamma_{\max}, \quad\text{for all~}k\in\mathbb N.
\]
Let us first establish the contraction property of the Lyapunov function \cref{eq:Ek}, from which we 
can obtain nonergodic convergence rate as well. After that we discuss the solvability of \cref{eq:apd-im-x-ex-l-l} and summarize it in \cref{algo:Semi-APD}.
\begin{thm}\label{thm:conv-apd-im-x-ex-l}
	Assume $f\in\mathcal S_{\mu}^0(\mathcal X)$ with $\mu\geqslant 0$. 
	Then for the semi-implicit scheme \cref{eq:apd-im-x-ex-l-l} with initial guess $(x_0,v_0)\in\mathcal X\times \R^n$ and the relation $\gamma_{k}\theta_k=\nm{A}^2\alpha_k^2$, we have $\{x_k\}\subset \mathcal X$ and 
	\begin{equation}\label{eq:diff-Ek-im-x-ex-l}
		\mathcal E_{k+1}-	\mathcal E_{k}
		\leqslant -\alpha_k		\mathcal E_{k+1}, \quad\text{for all~}k\in\mathbb N.
	\end{equation}
	Moreover, it holds that
	\begin{subnumcases}
		{}\nm{Ax_{k}-b}\leqslant \theta_k\mathcal R_0,\label{eq:conv-Axk-b-im-x-ex-l}\\
		0\leqslant \mathcal L(x_{k},\lambda^*)-	\mathcal L(x^*,\lambda_{k})
		\leqslant \theta_k\mathcal E_0,
		\label{eq:conv-Lk-im-x-ex-l}
		\\	
		{}\snm{f(x_k)-f(x^*)}
		\leqslant  \theta_k\left(\mathcal E_0+\mathcal R_0\nm{\lambda^*}\right),
		\label{eq:conv-fk-im-x-ex-l}
	\end{subnumcases}
	where $\mathcal R_0$ has been defined by \cref{eq:R0} and 
\begin{equation}\label{eq:bk-est-im-x-ex-l}
	\theta_k
	\leqslant
	\min\left\{
	\frac{Q}{	\sqrt{\gamma_0}k+Q}
	,\,
	\frac{Q^2}{(\sqrt{\gamma_{\min}}k+Q)^2}
	\right\}\quad\text{with}\,\,Q = 3\nm{A}+\sqrt{\gamma_{\max}}.
\end{equation}
\end{thm}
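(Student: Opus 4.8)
I sketch a proof plan for \cref{thm:conv-apd-im-x-ex-l}.

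The plan is to follow the template of the proof of \cref{thm:conv-apd-im-x-im-l} almost verbatim, isolating the single place where the explicit dual evaluation \cref{eq:barl-k} differs from the implicit one. Writing $\mathcal E_{k+1}-\mathcal E_k = I_1+I_2+I_3$ exactly as in \cref{eq:I1-I2-I3}, the manipulations of $I_1$ and $I_2$ are unchanged, because \cref{eq:apd-im-x-ex-l-l,eq:apd-im-x-ex-l-x} coincide with their fully implicit counterparts; in particular $I_2$ again produces the cross term $+\alpha_k\dual{Av_{k+1}-b,\lambda_{k+1}-\lambda^*}$. The only modification occurs in $I_3$: from \cref{eq:apd-im-x-ex-l-v} one now has $\gamma_k(v_{k+1}-v_k)=\mu_\beta\alpha_k(x_{k+1}-v_{k+1})-\alpha_k(\xi_{k+1}+A^{\top}\whk)$ with $\xi_{k+1}\in\partial f_\beta(x_{k+1})+N_{\mathcal X}(x_{k+1})$, so after splitting $A^{\top}\whk=A^{\top}\lambda^*+A^{\top}(\whk-\lambda^*)$ and using $Ax^*=b$, the term $\xi_{k+1}+A^{\top}\lambda^*\in\partial_x\mathcal L_\beta(x_{k+1},\lambda^*)$ still feeds the convexity estimate \cref{eq:est-mid-I3} (which nullifies $I_1$), while the offsetting cross term becomes $-\alpha_k\dual{Av_{k+1}-b,\whk-\lambda^*}$ instead of $-\alpha_k\dual{Av_{k+1}-b,\lambda_{k+1}-\lambda^*}$.

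Consequently, rather than cancelling exactly against the $I_2$ term, their sum leaves a residual $\alpha_k\dual{Av_{k+1}-b,\lambda_{k+1}-\whk}$. The key computation is that subtracting \cref{eq:barl-k} from $\lambda_{k+1}=\lambda_k+(\alpha_k/\theta_k)(Av_{k+1}-b)$ gives $\lambda_{k+1}-\whk=(\alpha_k/\theta_k)A(v_{k+1}-v_k)$, while the same update shows $Av_{k+1}-b=(\theta_k/\alpha_k)(\lambda_{k+1}-\lambda_k)$; hence the residual equals $\alpha_k\dual{\lambda_{k+1}-\lambda_k,A(v_{k+1}-v_k)}$. This is precisely the quantity that must be absorbed by the two negative squares $-\frac{\theta_k}{2}\nm{\lambda_{k+1}-\lambda_k}^2-\frac{\gamma_k}{2}\nm{v_{k+1}-v_k}^2$ already present in \cref{eq:I1+I2+I3-im-x-im-l}. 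By Cauchy--Schwarz, $\nm{A(v_{k+1}-v_k)}\le\nm{A}\nm{v_{k+1}-v_k}$, and Young's inequality gives $\alpha_k\nm{A}\,\nm{\lambda_{k+1}-\lambda_k}\nm{v_{k+1}-v_k}\le\frac{\theta_k}{2}\nm{\lambda_{k+1}-\lambda_k}^2+\frac{\alpha_k^2\nm{A}^2}{2\theta_k}\nm{v_{k+1}-v_k}^2$; the coefficient of the last square equals $\frac{\gamma_k}{2}$ exactly under the balancing relation $\gamma_k\theta_k=\nm{A}^2\alpha_k^2$, which is the hypothesis of the theorem. This closes the residual and yields \cref{eq:diff-Ek-im-x-ex-l}. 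I expect this absorption step to be the crux of the argument: the relation $\gamma_k\theta_k=\nm{A}^2\alpha_k^2$ is dictated by nothing other than the requirement that Young's inequality be exactly tight.

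With the contraction in hand, the three convergence estimates \cref{eq:conv-Axk-b-im-x-ex-l,eq:conv-Lk-im-x-ex-l,eq:conv-fk-im-x-ex-l} follow word-for-word as in \cref{thm:conv-apd-im-x-im-l}: iterating \cref{eq:diff-Ek-im-x-ex-l} gives $\mathcal E_k\le\theta_k\mathcal E_0$, whence $\nm{\lambda_k-\lambda^*}^2\le 2\mathcal E_0$; since the dual and primal-position updates are identical to the implicit case, the invariant \cref{eq:lk-l0} still holds and produces $\nm{Ax_k-b}\le\theta_k\mathcal R_0$, and the gap and objective-residual bounds follow from nonnegativity of the gap and the identity $\mathcal L(x_k,\lambda^*)-\mathcal L(x^*,\lambda_k)=f(x_k)-f(x^*)+\dual{\lambda^*,Ax_k-b}$.

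Finally, for the explicit decay \cref{eq:bk-est-im-x-ex-l} I would analyze the scalar recursions alone. From \cref{eq:apd-im-gamma-theta} one derives the closed form $\gamma_k=\mu_\beta+(\gamma_0-\mu_\beta)\theta_k$ (so $\gamma_{\min}\le\gamma_k\le\gamma_{\max}$ and $\theta_k\le 1$), and the balancing relation gives $\alpha_k=\sqrt{\gamma_k\theta_k}/\nm{A}$. Setting $t_k:=1/\theta_k$, the update $\theta_{k+1}=\theta_k/(1+\alpha_k)$ becomes $t_{k+1}-t_k=t_k\alpha_k=\sqrt{\gamma_k t_k}/\nm{A}$. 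For the $O(1/k)$ branch I use $\gamma_k t_k=\gamma_0+\mu_\beta(t_k-1)\ge\gamma_0$ together with $t_k\ge 1$ to get $t_{k+1}-t_k\ge\sqrt{\gamma_0}/\nm{A}\ge\sqrt{\gamma_0}/Q$, hence $t_k\ge 1+(\sqrt{\gamma_0}/Q)k$. For the sharper $O(1/k^2)$ branch I control $\sqrt{t_k}$ instead: the telescoping identity $\sqrt{t_{k+1}}-\sqrt{t_k}=\sqrt{t_k}(\sqrt{1+\alpha_k}-1)=\sqrt{\gamma_k}/\big(\nm{A}(\sqrt{1+\alpha_k}+1)\big)$ reduces matters to bounding $\nm{A}(\sqrt{1+\alpha_k}+1)\le Q$; since $\alpha_k\le\sqrt{\gamma_{\max}}/\nm{A}$ and $\sqrt{1+u}\le 2+u$ for $u\ge 0$, one gets $\nm{A}(\sqrt{1+\alpha_k}+1)\le\nm{A}(3+\sqrt{\gamma_{\max}}/\nm{A})=3\nm{A}+\sqrt{\gamma_{\max}}=Q$, so $\sqrt{t_{k+1}}-\sqrt{t_k}\ge\sqrt{\gamma_{\min}}/Q$ and $\sqrt{t_k}\ge 1+(\sqrt{\gamma_{\min}}/Q)k$. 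Inverting $t_k$ in each case yields the two terms in the minimum of \cref{eq:bk-est-im-x-ex-l}. This last part is purely computational; the only ingenuity is that the constant $Q=3\nm{A}+\sqrt{\gamma_{\max}}$ is reverse-engineered from the elementary inequality $\sqrt{1+u}\le 2+u$.
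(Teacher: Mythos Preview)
Your plan is correct and tracks the paper's proof closely; the only organizational difference is in where you absorb the mismatch caused by $\whk$. The paper instead inserts $\whk$ directly into the $I_2$ computation via the three-point identity
\[
-\tfrac{\theta_k}{2}\nm{\lambda_{k+1}-\lambda_k}^2+\theta_k\dual{\lambda_{k+1}-\lambda_k,\lambda_{k+1}-\whk}
=\tfrac{\theta_k}{2}\big(\nm{\lambda_{k+1}-\whk}^2-\nm{\lambda_k-\whk}^2\big),
\]
drops $-\tfrac{\theta_k}{2}\nm{\lambda_k-\whk}^2$, and is then left with the single positive term $\tfrac{\theta_k}{2}\nm{\lambda_{k+1}-\whk}^2$, which is absorbed by $-\tfrac{\gamma_k}{2}\nm{v_{k+1}-v_k}^2$ alone under the same relation $\gamma_k\theta_k=\nm{A}^2\alpha_k^2$. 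Your route keeps $I_2$ untouched and handles the resulting cross term $\theta_k\dual{\lambda_{k+1}-\lambda_k,\lambda_{k+1}-\whk}$ by Cauchy--Schwarz/Young, spending both negative squares; the two computations are algebraically equivalent (the slack you discard in Young is exactly the $-\tfrac{\theta_k}{2}\nm{\lambda_k-\whk}^2$ the paper drops). Your decay-rate analysis is likewise the same as the paper's, with your closed form $\gamma_k=\mu_\beta+(\gamma_0-\mu_\beta)\theta_k$ giving a slightly cleaner derivation of the $O(1/k)$ branch (in fact your $t_{k+1}-t_k\geq\sqrt{\gamma_0}/\nm{A}$ is sharper than the stated bound with $Q$).
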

\begin{proof}
	The fact $\{x_k\}\subset\mathcal X$ comes from \cref{eq:apd-im-x-ex-l-xk1-argmin}.	Following \cref{thm:conv-apd-im-x-im-l}, we start from the difference 
	$	\mathcal E_{k+1}-\mathcal E_{k} = I_1+I_2+I_3$, where $I_1,I_2$ and $I_3$ are defined in \cref{eq:I1-I2-I3}.
	
	For $I_2$, we continue with \cref{eq:I1-im-x-im-l} and insert $\whk$ into the last cross term to obtain
	\[
	\begin{aligned}
		I_2=&-\frac{\alpha_k\theta_{k+1}}{2}\nm{\lambda_{k+1}-\lambda^*}^2
		-\frac{\theta_{k}}{2}\nm{\lambda_{k+1}-\lambda_k}^2
		+\theta_{k}\big\langle \lambda_{k+1}-\lambda_k,\lambda_{k+1}-\whk+\whk-\lambda^*\big\rangle\\
		=&-\frac{\alpha_k\theta_{k+1}}{2}\nm{\lambda_{k+1}-\lambda^*}^2
		+\theta_{k}\big\langle \lambda_{k+1}-\lambda_k,\whk-\lambda^*\big\rangle
		+\frac{\theta_{k}}{2}\left(\big\|\lambda_{k+1}-\whk\big\|^2-\big\|\lambda_{k}-\whk\big\|^2\right)
		.
	\end{aligned}
	\]
	By \eqref{eq:apd-im-x-ex-l-l} we rewrite the cross term and 
	drop the negative term $-\big\|\lambda_{k}-\whk\big\|^2$ 
	to get
	\begin{equation}\label{eq:I2-im-x-ex-l}
		I_2\leqslant {}-\frac{\alpha_k\theta_{k+1}}{2}\nm{\lambda_{k+1}-\lambda^*}^2
		+\frac{\theta_{k}}{2}\big\|\lambda_{k+1}-\whk\big\|^2
		+\alpha_{k}\big\langle Av_{k+1}-b,\whk-\lambda^*\big\rangle.
	\end{equation}
	The estimation of $I_3$ is in line with that of  \cref{thm:conv-apd-im-x-im-l}, with $\lambda_{k+1}$ being $\whk$. For simplicity, we will not recast the redundant details here. Consequently, one finds that the estimate \cref{eq:I1+I2+I3-im-x-im-l} now becomes 
	\[
	\begin{aligned}
		\mathcal E_{k+1}-	\mathcal E_{k}\leqslant &-\alpha_{k}	\mathcal E_{k+1}
		+\frac{\theta_{k}}{2}\big\| \lambda_{k+1}-\whk\big\|^2
		-	\frac{\gamma_{k}}2\nm{v_{k+1}-v_k}^2.
	\end{aligned}
	\]
	Thanks to \eqref{eq:apd-im-x-ex-l-l} and \cref{eq:barl-k}, we have that $		\lambda_{k+1}-\whk = \alpha_k/\theta_{k}A(v_{k+1}-v_k)$, and by our choice $\gamma_{k}\theta_k=\nm{A}^2\alpha_k^2$, it is not hard to see 
	\[
	\begin{aligned}
		\frac{\theta_{k}}{2}\big\| \lambda_{k+1}-\whk\big\|^2
		\leqslant {}\frac{\nm{A}^2\alpha_k^2}{2\theta_{k}}\nm{v_{k+1}-v_k}^2
		=\frac{\gamma_{k}}2\nm{v_{k+1}-v_k}^2.
	\end{aligned}
	\]
	Putting this back to the previous estimate implies \cref{eq:diff-Ek-im-x-ex-l}.
	
	As the proof of \cref{eq:conv-Axk-b-im-x-ex-l} is similar with 
\cref{eq:conv-Lk-im-x-im-l}, it boils down to checking the 
decay estimate \cref{eq:bk-est-im-x-ex-l}. Let us start from the following estimate
\[
\frac{1}{\sqrt{\theta_{k+1}}}-\frac{1}{\sqrt{\theta_{k}}}
=\frac{\alpha_{k}/\sqrt{\theta_{k}} }{1+\sqrt{1+\alpha_{k}}}
=\frac{\sqrt{\gamma_k}}{\nm{A}+\sqrt{1+\alpha_k}\nm{A}},
\]
where we used the identity $\theta_{k}=\theta_{k+1}(1+\alpha_k)$ (cf. \cref{eq:apd-im-gamma-theta}) and the relation $\nm{A}^2\alpha_k^2 = \gamma_k\theta_{k}$. Since $\gamma_{\min}\leqslant \gamma_k\leqslant \gamma_{\max}$, we have
\[
\begin{aligned}
	\sqrt{1+\alpha_k}\nm{A}\leqslant {}&\nm{A}+\sqrt{\alpha_k}\nm{A}
	=\nm{A} + \sqrt{\nm{A}\sqrt{\theta_k\gamma_k}}\\
	\leqslant {}& \nm{A} + \nm{A}+\sqrt{\theta_{k}\gamma_k}
	\leqslant 2\nm{A}+\sqrt{\gamma_{\max}},
\end{aligned}
\]
and it follows that
\begin{equation}\label{eq:est-1}
	\frac{1}{\sqrt{\theta_{k+1}}}-\frac{1}{\sqrt{\theta_{k}}}\geqslant 
	\frac{\sqrt{\gamma_{\min}}}{Q}
	\quad\Longrightarrow\quad
	\theta_k\leqslant \frac{Q^2}{(\sqrt{\gamma_{\min}}k+Q)^2}.
\end{equation}
Here recall that $Q = 3\nm{A}+\sqrt{\gamma_{\max}}$.
In addition, by \eqref{eq:apd-im-gamma-theta}, we have
\[
\frac{\gamma_{k+1}}{\gamma_{k}} = \frac{1+\mu_\beta\alpha_k/\gamma_k}{1+\alpha_k}\geqslant \frac{1}{1+\alpha_k}=\frac{\theta_{k+1}}{\theta_{k}},
\]
which means $\gamma_k\geqslant \gamma_0\theta_k$ and also implies
\[
\frac{1}{\sqrt{\theta_{k+1}}}-\frac{1}{\sqrt{\theta_{k}}}
\geqslant 
\frac{\sqrt{\gamma_0}}{Q}\sqrt{\theta_{k}}.
\]
As $\theta_{k}\geqslant \theta_{k+1}>0$, we obtain 
\[
\frac{1}{\theta_{k+1}}-\frac{1}{\theta_{k}}\geqslant 
\frac{\sqrt{\gamma_0}}{Q}
\quad\Longrightarrow\quad
\theta_k\leqslant \frac{Q}{	\sqrt{\gamma_0}k+Q},
\]
which together with \cref{eq:est-1} gives \cref{eq:bk-est-im-x-ex-l}
and concludes the proof of this theorem.
\end{proof}

Analogously to \cref{eq:apd-im-x-im-l-lk1}, one has
\begin{subnumcases}{}
	\lambda_{k+1}={}\lambda_k-\frac{1}{\theta_{k}}(Ax_k-b)+\frac{1}{\theta_{k+1}}(Ax_{k+1}-b),
	\label{eq:apd-im-x-ex-l-lk1}\\
	x_{k+1}
	\in{}y_k
	-\eta_k\left(\partial f_\beta(x_{k+1})+N_{\mathcal X}(x_{k+1})+A^{\top}\whk\right),
	\label{eq:apd-im-x-ex-l-xk1}
\end{subnumcases}
where $\tau_k$ and $y_k$ are defined in \cref{eq:im-x-im-l-tauk-yk} and $\eta_k=\alpha_k^2/\tau_k$.
Then it is possible to eliminate $\lambda_{k+1}$ from \cref{eq:apd-im-x-ex-l-xk1} and get
\begin{equation}
	\label{eq:apd-im-x-ex-l-xk1-argmin}
	x_{k+1}
	=\mathop{ \argmin}\limits_{x\in\mathcal X}
	\left\{
	f_\beta(x)
	+\frac{1}{2\eta_k}\big\|x-y_k
	+\eta_kA^{\top}\whk\big\|^2
	\right\}.
\end{equation}
Comparing this with \cref{eq:apd-im-x-im-l-xk1-argmin}, we see explicit discretization of $\lambda$ in $\partial_x\mathcal L_\beta(x,\lambda)$ leads to linearization of the penalty term $\ell_{\theta_{k+1}}(x)$. As mentioned at the end of \cref{sec:apd-im}, the advantage of the augmented term in $f_\beta$ is to enlarge $\mu_\beta=\mu+\beta\sigma_{\min}^2(A)$ when $\sigma_{\min}(A)>0$. This promises $\gamma_{\min}>0$, and by \cref{eq:bk-est-im-x-ex-l} we have the faster rate $O(1/k^2)$ but the price is to compute $\prox_{f_\beta}^{\mathcal X}$. Otherwise, if $\sigma_{\min}(A)=0$, then that term is useless and we shall set $\beta=0$, which means \cref{eq:apd-im-x-ex-l-xk1-argmin} only involves the operation $\prox_{f}^{\mathcal X}$, i.e., the proximal computation of $f$ on $\mathcal X$.

To the end of this section, let us reformulate \cref{eq:apd-im-x-ex-l-l} with the step size $\gamma_{k}\theta_k=\nm{A}^2\alpha_k^2$ in \cref{algo:Semi-APD}, which is called the semi-implicit APD method.
\begin{algorithm}[H]
	\caption{Semi-implicit APD method for \cref{eq:min-f-X-Ax-b} with $f\in\mathcal S_{\mu}^{0}(\mathcal X), \mu\geqslant 0$.}
	\label{algo:Semi-APD}
	\begin{algorithmic}[1] 
		\REQUIRE  $\beta\geqslant 0,\,\theta_0=1,\,\gamma_0>0,\, (x_0,v_0)\in\mathcal X\times \R^n,\,\lambda_0\in\R^m$.
		\STATE Set $\beta=0$ if $\sigma_{\min}(A)=0$, and let $\mu_\beta=\mu+\beta\sigma_{\min}^2(A)$.
		\FOR{$k=0,1,\ldots$}
		\STATE Choose step size $\alpha_k=\sqrt{\theta_{k}\gamma_k}/\nm{A}$.
		\STATE Update $\displaystyle \gamma_{k+1} = (\gamma_k+\mu_\beta\alpha_k)/(1+\alpha_k)$ and $\displaystyle \theta_{k+1}= \theta_k/(1+\alpha_k)$.
		\STATE Set $\displaystyle  \tau_k = \gamma_k+\mu_\beta\alpha_k+\gamma_k\alpha_k$ and $\eta_k=\alpha_k^2/\tau_k$.
		\STATE Set $\displaystyle  y_k={}\left((\gamma_k
		+\mu_\beta\alpha_k)x_k+\gamma_k\alpha_kv_k\right)/\tau_k$.
		\STATE Compute $\whk
		=\lambda_k+\alpha_k/\theta_k\left(
		Av_k-b
		\right)$.
		\STATE Update $\displaystyle	
		x_{k+1}=\prox_{\eta_kf_\beta}^{\mathcal X}(y_k
		-\eta_kA^{\top}\whk)$.\label{step:Semi-APD-xk1}
		\STATE Update $\displaystyle v_{k+1} =x_{k+1}+(x_{k+1}-x_k)/\alpha_k$. 		
		\STATE Update $\lambda_{k+1}=
		\lambda_k+\alpha_k/\theta_k\left(
		Av_{k+1}-b
		\right)$.
		\ENDFOR
	\end{algorithmic}
\end{algorithm}
\begin{remark}
	Notice that for $\beta=0$, \cref{eq:apd-im-x-ex-l-xk1-argmin} 
	is close to the partially linearized proximal ALM. In addition, by using 
	the relation \eqref{eq:apd-im-x-ex-l-l}, we can drop the 
	sequence $\{\lambda_{k}\}$ and simplify \cref{algo:Semi-APD} 
	as a method involving only two-term sequence $\{(x_k,y_k)\}$. \hfill\ensuremath{\blacksquare}
\end{remark}
\begin{remark}
	From \cref{eq:conv-Lk-im-x-ex-l,eq:bk-est-im-x-ex-l}, we conclude the nonergodic convergence rate 
	\begin{equation}\label{eq:rate-im-x-ex-l}
		{}\snm{f(x_k)-f(x^*)}+\nm{Ax_k-b}	\leqslant 
		C
		\left\{
		\begin{aligned}
			&		\frac{\nm{A}}{k},&&\mu_\beta = 0,\\
			&	\frac{\nm{A}^2}{k^2},&&\mu _\beta> 0,
		\end{aligned}
		\right.
	\end{equation}
	where the implicit constant $C$ may depend on small $\gamma_0$.
	But for large $\gamma_0$ (compared with $\nm{A}$), $C$ can be uniformly bounded with respect to $\gamma_0$. This holds for all the rates in the sequel. 
	For a detailed verification of this claim, we refer to \cite{luo_differential_2021}.
	\hfill\ensuremath{\blacksquare}
\end{remark}
\begin{remark}\label{rem:jdmm}
	As mentioned at the end of \cref{sec:apd-im}, since the augmented term has been linearized, both the semi-implicit discretization \cref{eq:apd-im-x-ex-l-v} and the explicit one \cref{eq:apd-ex-x-ex-l-x-h-g} can be applied to the two block case \cref{eq:2b} directly.
	
	As a byproduct, the scheme \cref{eq:apd-im-x-ex-l-v} with $\beta=0$ leads to a linearized parallel proximal ADMM. Correspondingly, for updating $x_{k+1} = (x_{k+1}^1,x_{k+1}^2)$, step \ref{step:Semi-APD-xk1} of \cref{algo:Semi-APD} involves two parallel proximal calculations: $\prox_{\eta_kf_1}$ and $\prox_{\eta_kf_2}$. In fact, we claim that it can be extended to the multi-block case	
	\begin{equation}\label{eq:nb}
		f(x) =\sum_{i=1}^{n}f_i(x_i),\quad Ax = \sum_{i=1}^{n}A_ix_i,
	\end{equation}
	and the nonergodic rate \cref{eq:rate-im-x-ex-l} still holds true. This means for general convex $f_i$, we have the nonergodic rate $O(1/k)$ but to obtain the faster rate $O(1/k^2)$, all components $f_i$'s shall be strongly convex to ensure $\mu>0$. This is very close to the decomposition method in \cite{tran-dinh_primal-dual_2015} and the predictor corrector proximal multipliers \cite{chen_proximal-based_1994}. \qed
\end{remark}

\section{A Corrected Semi-implicit Operator Splitting Scheme}
\label{sec:apd-correc-ex-x-im-l}
The semi-implicit discretization proposed in \cref{sec:apd-ex-x-im-l} applies explicit discretization to $\lambda$ in \eqref{eq:apd-im-x-ex-l-v}. It is of course reasonable to use explicit discretization for $x$ in \eqref{eq:apd-im-x-ex-l-x}. To be more precise, consider the following  semi-implicit discretization for \cref{eq:apd-sys-lambda}:
\begin{subnumcases}{}
	\theta_k \frac{\lambda_{k+1}-\lambda_k}{\alpha_k} = {} \nabla_\lambda \mathcal L_\beta(v_{k+1},\lambda_{k+1}),\label{eq:apd-ex-x-im-l-l}\\
	\frac{x_{k+1}-x_{k}}{\alpha_k}={}v_{k}-x_{k+1},\label{eq:apd-ex-x-im-l-x}\\
	\gamma_k \frac{v_{k+1}-v_k}{\alpha_k} \in {}\mu_\beta(x_{k+1}-v_{k+1}) -\partial_x\mathcal L_\beta
	(x_{k+1},\lambda_{k+1}),
	\label{eq:apd-ex-x-im-l-v}
\end{subnumcases}
where the parameter system \cref{eq:theta-gamma} is 
still discretized by \cref{eq:apd-im-gamma-theta}.

As one may see, $x_{k+1}$ can be updated from \eqref{eq:apd-ex-x-im-l-x} easily but there comes a problem: can we compute the subgradient $\xi_{k+1}\in\partial f_{\beta}(x_{k+1})+N_{\mathcal X}(x_{k+1})$ ? Once such a $\xi_{k+1}$ is obtained, \eqref{eq:apd-ex-x-im-l-v} becomes
\begin{equation*}
	\gamma_k \frac{v_{k+1}-v_k}{\alpha_k} ={}\mu_\beta(x_{k+1}-v_{k+1}) -\big(\xi_{k+1}+A^{\top}\lambda_{k+1}\big).
\end{equation*}
Observing form this and \eqref{eq:apd-ex-x-im-l-l}, $\lambda_{k+1}$ is only linearly coupled with $v_{k+1}$.

However, to get $\xi_{k+1}$, we shall impose the condition: $x_{k+1}\in\mathcal X$, which is promised if both $x_k$ and $v_k$ belong to $\mathcal X$, as $x_{k+1}$ is a convex combination of them. Unfortunately, it is observed that the semi-implicit scheme \cref{eq:apd-ex-x-im-l-v} does not preserve the property: $v_{k+1}\in\mathcal X$. Therefore, the sequence $\{(x_k,v_k)\}$ may be outside $\mathcal X$.

Below, in \cref{sec:one-it-ex-x-im-l}, we shall give a one-iteration analysis to further illustrate the ``degeneracy'' of the scheme \cref{eq:apd-ex-x-im-l-l}, which loses the contraction property \cref{eq:diff-Ek-im-x-ex-l}, and then we propose a modified scheme as a remedy in \cref{sec:correc-ex-x-im-l}.
\subsection{A one-iteration analysis}
\label{sec:one-it-ex-x-im-l}
As before, we wish to establish the contraction property with respect to the discrete Lyapunov function \cref{eq:Ek} but there exists some cross term that makes us  in trouble.
\begin{lem}\label{thm:conv-apd-ex-x-im-l}
	Suppose $f\in\mathcal S_{\mu}^{0}(\mathcal X)$ with $\mu\geqslant 0$. Let $k\in\mathbb N$ be fixed and assume  $(x_k,v_k)\in \mathcal X\times \mathcal X$. Then for the semi-implicit scheme \cref{eq:apd-ex-x-im-l-l} with $\alpha_k>0$, we have $x_{k+1}\in\mathcal X$ and
	\begin{equation}\label{eq:diff-Ek-ex-x-im-l}
		\begin{aligned}
			\mathcal E_{k+1}\!-\!\mathcal E_{k}
			\leqslant&-\alpha_k\mathcal E_{k+1}	
			-\alpha_k
			\dual{ \xi_{k+1}+A^{\top}\lambda^*,v_{k+1}-v_k}
			-	\frac{\gamma_{k}}2\nm{v_{k+1}-v_k}^2
			-\frac{\theta_{k}}{2}\nm{\lambda_{k+1}-\lambda_{k}}^2,
		\end{aligned}
	\end{equation}
	where $\xi_{k+1}\in\partial f_\beta(x_{k+1})+N_{\mathcal X}(x_{k+1})$.
\end{lem}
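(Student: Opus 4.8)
The plan is to mirror the proof of \cref{thm:conv-apd-im-x-im-l} step by step and to isolate the single place where the explicit treatment of $x$ in \cref{eq:apd-ex-x-im-l-x} produces a term that cannot be absorbed. First I would dispose of the feasibility claim: since \cref{eq:apd-ex-x-im-l-x} rearranges to $x_{k+1}=(x_k+\alpha_k v_k)/(1+\alpha_k)$, the iterate $x_{k+1}$ is a convex combination of $x_k,v_k\in\mathcal X$, so $x_{k+1}\in\mathcal X$ by convexity and the subgradient $\xi_{k+1}\in\partial f_\beta(x_{k+1})+N_{\mathcal X}(x_{k+1})$ is well defined.

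Then I would write $\mathcal E_{k+1}-\mathcal E_k=I_1+I_2+I_3$ with $I_1,I_2,I_3$ exactly as in \cref{eq:I1-I2-I3}. Because the $\lambda$-equation \cref{eq:apd-ex-x-im-l-l} and the parameter rule \cref{eq:apd-im-gamma-theta} coincide with those of \cref{sec:apd-im}, the estimate \cref{eq:I1-im-x-im-l-simple} for $I_2$ transfers verbatim, and so does the expansion \cref{eq:I2-2-im-x-ex-l} of $I_3$ obtained from the implicit relation $\gamma_k(v_{k+1}-v_k)=\mu_\beta\alpha_k(x_{k+1}-v_{k+1})-\alpha_k(\xi_{k+1}+A^{\top}\lambda_{k+1})$ read off \cref{eq:apd-ex-x-im-l-v}. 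In particular, the terms $\pm\alpha_k\dual{Av_{k+1}-b,\lambda_{k+1}-\lambda^*}$ still cancel between $I_2$ and $I_3$, and the identity \cref{eq:cross} handles the $\mu_\beta$ cross term precisely as before.

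The single new ingredient is the treatment of $-\alpha_k\dual{\xi_{k+1}+A^{\top}\lambda^*,v_{k+1}-x^*}$. In \cref{sec:apd-im} the relation $x'=v-x$ was discretized fully implicitly, giving $v_{k+1}-x_{k+1}=(x_{k+1}-x_k)/\alpha_k$ and hence only the two inner products bounded by convexity in \cref{eq:est-mid-I3}. Here \cref{eq:apd-ex-x-im-l-x} instead yields $v_k-x_{k+1}=(x_{k+1}-x_k)/\alpha_k$, so I would use the splitting
\[
v_{k+1}-x^*=(v_{k+1}-v_k)+\frac{x_{k+1}-x_k}{\alpha_k}+(x_{k+1}-x^*),
\]
which reproduces the two familiar terms together with the extra piece $-\alpha_k\dual{\xi_{k+1}+A^{\top}\lambda^*,v_{k+1}-v_k}$. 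Applying \cref{eq:est-mid-I3} to the two familiar terms cancels $I_1$ and, after combining with the $\mu_\beta$ contributions from \cref{eq:cross}, assembles $-\alpha_k\mathcal E_{k+1}$ plus a surplus negative square $-\mu_\beta\alpha_k/2\,\nm{x_{k+1}-v_{k+1}}^2$; discarding that square and retaining the leftover negatives $-\gamma_k/2\,\nm{v_{k+1}-v_k}^2$ and $-\theta_k/2\,\nm{\lambda_{k+1}-\lambda_k}^2$ gives \cref{eq:diff-Ek-ex-x-im-l}.

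The main obstacle is conceptual rather than computational: the extra cross term $-\alpha_k\dual{\xi_{k+1}+A^{\top}\lambda^*,v_{k+1}-v_k}$ has indefinite sign and cannot be dominated by the two available negative squares without additional structure, since $\xi_{k+1}$ is merely a subgradient and need not be bounded in terms of $v_{k+1}-v_k$. This is exactly the \emph{degeneracy} that destroys the clean contraction \cref{eq:diff-Ek-im-x-ex-l}; accordingly the lemma only records the inequality with this term left in, and the correction developed in \cref{sec:correc-ex-x-im-l} is designed to neutralize it.
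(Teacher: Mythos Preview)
Your proposal is correct and follows essentially the same route as the paper's proof: both reuse the $I_2$ identity \cref{eq:I1-im-x-im-l-simple} and the $I_3$ expansion \cref{eq:I2-2-im-x-ex-l} verbatim, and both handle the second cross term by the splitting $\alpha_k(v_{k+1}-x^*)=\alpha_k(v_{k+1}-v_k)+(x_{k+1}-x_k)+\alpha_k(x_{k+1}-x^*)$ coming from \cref{eq:apd-ex-x-im-l-x}, applying the convexity bound \cref{eq:est-mid-I3} to the last two pieces and leaving the first as the uncontrolled extra term. Your identification of the surplus square $-\tfrac{\mu_\beta\alpha_k}{2}\nm{x_{k+1}-v_{k+1}}^2$ to be discarded and your remark on why the leftover cross term is the source of degeneracy are exactly in line with the paper's argument.
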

\begin{proof}
	Again, let us follow the proof of \cref{thm:conv-apd-im-x-im-l} and begin with the difference 
	$	\mathcal E_{k+1}-\mathcal E_{k} = I_1+I_2+I_3$, where $I_1,I_2$ and $I_3$ are defined in \cref{eq:I1-I2-I3}. 
	
	We just copy the identity \cref{eq:I1-im-x-im-l-simple} for $I_2$ here:
	\[
	I_2=-\frac{\alpha_k\theta_{k+1}}{2}\nm{\lambda_{k+1}-\lambda^*}^2
	-\frac{\theta_{k}}{2}\nm{\lambda_{k+1}-\lambda_{k}}^2
	+\alpha_{k}\dual{Av_{k+1}-b,\lambda_{k+1}-\lambda^*}.
	\]
	For $I_3$, let us start from \cref{eq:I2-2-im-x-ex-l}, i.e.,
	\[
	\begin{aligned}
		I_3
		= {}&\mu_\beta\alpha_k\dual{x_{k+1} - v_{k+1}, v_{k+1} - x^{*}}
		-
		\alpha_k \dual{ \xi_{k+1}+A^{\top}\lambda^*, v_{k+1} - x^{*}}\\
		{}&\quad+	\frac{\alpha_k(\mu_\beta-\gamma_{k+1})}{2}
		\nm{v_{k+1}-x^*}^2		-	\frac{\gamma_{k}}2\nm{v_{k+1}-v_k}^2\\
		{}&\qquad	-\alpha_k\dual{Av_{k+1}-b,\lambda_{k+1}-\lambda^*} .
	\end{aligned}
	\]
	The first cross term is expanded as \cref{eq:cross} but the second cross term contains more:
	\begin{align*}
		{}&
		-\alpha_k \dual{ \xi_{k+1}+A^{\top}\lambda^*, v_{k+1} - x^{*}} \\
		=&-\alpha_k
		\dual{ \xi_{k+1}+A^{\top}\lambda^*,v_{k+1}-v_k}
		-	\dual{ \xi_{k+1}+A^{\top}\lambda^*,x_{k+1}-x_k} \\
		{}&\quad	-\alpha_k\dual{\xi_{k+1}+A^{\top}\lambda^*, x_{k+1}-x^*},
	\end{align*}
	where we have used \eqref{eq:apd-ex-x-im-l-x}. Similar with \cref{eq:est-mid-I3}, we have 
	\[
	\begin{aligned}
		-\alpha_k \dual{ \xi_{k+1}+A^{\top}\lambda^*, v_{k+1} - x^{*}} 
		\leqslant {}&\mathcal L_\beta(x_k,\lambda^*)-\mathcal L_\beta(x_{k+1},\lambda^*)
		-\alpha_k
		\dual{ \xi_{k+1}+A^{\top}\lambda^*,v_{k+1}-v_k}\\
		{}&\quad+\alpha_k\left(\mathcal L_\beta(x^*,\lambda^*)-\mathcal L_\beta(x_{k+1},\lambda^*)\right)
		-\frac{\mu_\beta\alpha_k}{2}\nm{x_{k+1}-x^*}^2.
	\end{aligned}
	\]
	Note that $I_1$ and the first term in the above estimate cancel out each other.
	Summarizing those results, we find that \cref{eq:I1+I2+I3-im-x-im-l} now reads as 
	\begin{equation}
		\label{eq:diff-ex-x-im-l-mid}
		\begin{aligned}
			\mathcal E_{k+1}\!-\!	\mathcal E_{k}\leqslant &-\alpha_{k}	\mathcal E_{k+1}
			-\alpha_k
			\dual{ \xi_{k+1}+A^{\top}\lambda^*,v_{k+1}-v_k}
			-	\frac{\gamma_{k}}2\nm{v_{k+1}-v_k}^2
			-\frac{\theta_{k}}{2}\nm{\lambda_{k+1}-\lambda_{k}}^2,
		\end{aligned}
	\end{equation}
	which gives 
	\cref{eq:diff-Ek-ex-x-im-l} and completes the proof of this lemma.
\end{proof}
\subsection{Correction via extrapolation}
\label{sec:correc-ex-x-im-l}
We now have two main difficulties: one is to cancel the cross terms in \cref{eq:diff-Ek-ex-x-im-l}, and the other is to maintain the sequence $\{(x_k,v_k)\}$ in $\mathcal X$. For the first, following the main idea from \cite{luo_differential_2021}, we replace $x_{k+1}$ in \cref{eq:apd-ex-x-im-l-x} by $y_k$ and add an extra extrapolation step to update $x_{k+1}$. For the second, a minor modification is to substitute $\partial_x\mathcal L_\beta
(x_{k+1},\lambda_{k+1}) $ in \eqref{eq:apd-ex-x-im-l-v} with 
$\partial_x\mathcal L_\beta
(v_{k+1},\lambda_{k+1}) $ and this leads to
\begin{subnumcases}{}
	\theta_k \frac{\lambda_{k+1}-\lambda_k}{\alpha_k} = {} \nabla_\lambda \mathcal L_\beta(v_{k+1},\lambda_{k+1}),\label{eq:apd-ex-x-im-l-l-f}\\
	\frac{y_{k}-x_{k}}{\alpha_k}={}v_{k}-y_{k},\label{eq:apd-ex-x-im-l-y-f}\\
	\gamma_k \frac{v_{k+1}-v_k}{\alpha_k} \in {}\mu_\beta(y_{k}-v_{k+1}) - 
	\partial_x\mathcal L_\beta
	(v_{k+1},\lambda_{k+1}) ,
	\label{eq:apd-ex-x-im-l-v-f}\\
	\frac{x_{k+1}-x_{k}}{\alpha_k}={}v_{k+1}-x_{k+1}.
	\label{eq:apd-ex-x-im-l-x-f}
\end{subnumcases}
Here the step \eqref{eq:apd-ex-x-im-l-v-f} becomes implicit, i.e., $f_\beta$ is discretized implicitly in terms of $v_{k+1}$. Although \cref{eq:apd-ex-x-im-l-x-f} is totally different from the fully implicit method \cref{eq:apd-im-x-im-l-v} and the previous semi-implicit method \cref{eq:apd-im-x-ex-l-v}, both of which applied implicit discretization to $f_\beta$ (with respect to $x_{k+1}$), we shall leave it alone and adopt possible explicit discretization for $f_\beta$. This is somewhat equivalent to linearizing $f_\beta$ and thus requires smoothness of $f$. 

Therefore, in general, we consider the composite 
case $f = h+g$ where $h\in\mathcal S_{\mu,L}^{1,1}(\mathcal X)$ with $0\leqslant \mu\leqslant L<\infty$ 
and $g\in\mathcal S_0^0(\mathcal X)$. Then linearization can be applied to the smooth part $h$ while implicit scheme is maintained for the nonsmooth part $g$. This utilizes the separable structure of $f$ and is called {\it operator splitting}, which is also known as forward-backward technique. Needless to say, the case $g=0$ is allowed, and for $h\in\mathcal S_{0,L}^{1,1}(\mathcal X),\,g\in\mathcal S_\mu^0(\mathcal X)$, we can split $h+g$ as $(h(x) + \mu/2\|x\|^2) + (g(x) - \mu/2\|x\|^2)$, which reduces to our current setting.

Keeping this in mind, we consider the following corrected semi-implicit scheme: given $(\lambda_k,x_k,v_k)\in\R^m\times \mathcal X\times \mathcal X$ and $\alpha_k>0$, compute $(\lambda_{k+1},x_{k+1},v_{k+1})\in\R^m\times \mathcal X\times \mathcal X$ from 
\begin{subnumcases}{}
	\theta_k \frac{\lambda_{k+1}-\lambda_k}{\alpha_k} = {} \nabla_\lambda \mathcal L_\beta(v_{k+1},\lambda_{k+1}),\label{eq:apd-ex-x-im-l-l-h-g}\\
	\frac{y_{k}-x_{k}}{\alpha_k}={}v_{k}-y_{k},\label{eq:apd-ex-x-im-l-y-h-g}\\
	\gamma_k \frac{v_{k+1}-v_k}{\alpha_k} \in \mu_\beta(y_{k}- v_{k+1}) - 
	\left(\nabla h_\beta(y_k)+ \partial g_{\mathcal X}(v_{k+1})+ A^{\top}\lambda_{k+1}\right),
	\label{eq:apd-ex-x-im-l-v-h-g}\\
	\frac{x_{k+1}-x_{k}}{\alpha_k}={}v_{k+1}-x_{k+1},
	\label{eq:apd-ex-x-im-l-x-h-g}
\end{subnumcases}
where $g_{\mathcal X} = g+\delta_{\mathcal X}$ and $\partial g_{\mathcal X}(v_{k+1})=\partial g(v_{k+1})+N_{\mathcal X}(v_{k+1})$.

Evidently, the step \eqref{eq:apd-ex-x-im-l-v-h-g} can be rewritten as
\begin{equation}\label{eq:apd-ex-x-im-l-lk1-h-g-argmin}
	v_{k+1} = \mathop{\argmin}_{\mathcal X}\left\{
	g(v)+\dual{A^{\top}\lambda_{k+1}+	\nabla h_\beta(y_k),v}+\frac{\tau_k}{2\alpha_k}\nm{v-w_k}^2
	\right\},
\end{equation}
where $\tau_k=\gamma_{k}+\mu_\beta\alpha_k$ and $w_{k}={}(\gamma_kv_{k}+\mu_\beta\alpha_ky_k)/\tau_k$.
Also, after eliminating $\lambda_{k+1}$, \cref{eq:apd-ex-x-im-l-lk1-h-g-argmin} 
can be further rearranged as follows
\begin{equation}\label{eq:apd-ex-x-im-l-lk1-h-g-argmin-new}
	v_{k+1} = \mathop{\argmin}_{\mathcal X}\left\{
	g(v)+\dual{z_k,v}+\frac{\alpha_k}{2\theta_{k}}\nm{Av-b}^2
	+\frac{\tau_k}{2\alpha_k}\nm{v-w_k}^2
	\right\},
\end{equation}
where $z_k=\nabla h_\beta(y_k)+A^{\top}\lambda_{k}$. Since $x_k,\,v_k\in\mathcal X$, by \eqref{eq:apd-ex-x-im-l-y-h-g} it clear that $y_k\in\mathcal X$, and once $v_{k+1}\in\mathcal X$ is obtained, we can update $x_{k+1}\in\mathcal X$ and $\lambda_{k+1}$ sequentially. Whence, if $x_0,v_0\in \mathcal X$, then the modified scheme \cref{eq:apd-ex-x-im-l-l-h-g} maintains $\{(x_k,y_k,v_k)\}\subset\mathcal X$.

Particularly, if $\mathcal X=\R^n$, then the step \cref{eq:apd-ex-x-im-l-lk1-h-g-argmin-new} is very close to \cite[Algorithm 3]{he_convergence_2021} and the accelerated linearized proximal ALM \cite{Xu2017}, both of which are proved to 
possess the nonergodic rate $O(L/k^2)$ under the assumption that $f=h+g$ is convex and $h$ has $L$-Lipschitz continuous gradient. As proved below in \cref{thm:apd-ex-x-im-l-h-g}, our method \cref{eq:apd-ex-x-im-l-x-h-g} also enjoys this rate for $\mu_\beta=0$. But for $\mu_\beta>0$, we have faster linear rate, and in \cref{sec:algo-sub-ex-x-im-l}, following the spirit from \cite{li_asymptotically_2020,luo_primal-dual_2021,niu_sparse_2021}, we will discuss how to design proper inner solver by utilizing the structure of the subproblem with respect to $\lambda_{k+1}$, instead of computing $v_{k+1}$ directly from \cref{eq:apd-ex-x-im-l-lk1-h-g-argmin-new}.
\subsection{Nonergodic convergence rate}
In this part, let us establish the contraction property of the corrected semi-implicit 
scheme \cref{eq:apd-ex-x-im-l-l-h-g} and prove its convergence rate.
\begin{thm}\label{thm:apd-ex-x-im-l-h-g}
	Assume $f=h+g$ where $h\in\mathcal S_{\mu,L}^{1,1}(\mathcal X)$ with $0\leqslant \mu\leqslant L<\infty$ and $g\in\mathcal S_0^{0}(\mathcal X)$. Given initial value $x_0,v_0\in \mathcal X$, the corrected semi-implicit scheme \cref{eq:apd-ex-x-im-l-x-h-g} generates $\{(x_k,y_k,v_k)\}\subset \mathcal X$, and if  $L_\beta\alpha_k^2=\gamma_{k}$, then there holds 
	\begin{equation}\label{eq:diff-Ek-ex-x-im-l-correc}
		\mathcal E_{k+1}-	\mathcal E_k
		\leqslant -\alpha_k		\mathcal E_{k+1}, \quad\text{for all~}k\in\mathbb N,
	\end{equation}
	which implies that
	\begin{subnumcases}
		{}\nm{Ax_{k}-b}\leqslant \theta_k\mathcal R_0,\label{eq:conv-Axk-b-ex-x-im-l}\\
		0\leqslant \mathcal L(x_{k},\lambda^*)-	\mathcal L(x^*,\lambda_{k})
		\leqslant \theta_k\mathcal E_0,
		\label{eq:conv-Lk-ex-x-im-l}
		\\	
		{}\snm{f(x_k)-f(x^*)}
		\leqslant  \theta_k\left(\mathcal E_0+\mathcal R_0\nm{\lambda^*}\right),
		\label{eq:conv-fk-ex-x-im-l}
	\end{subnumcases}
	where $\mathcal R_0$ has been defined by \cref{eq:R0} and 
	\begin{equation}\label{eq:bk-est-ex-x-im-l}
		\theta_k\leqslant 
		\min\left\{
		\frac{4L_\beta}{(\sqrt{\gamma_0}\, k+2\sqrt{L_\beta})^2},\,
		\left(1+\sqrt{\frac{\gamma_{\min}}{L_\beta}}\right)^{-k}
		\right\}.
	\end{equation}
	Here, recall that $\gamma_{\min} = \min\{\mu_\beta,\gamma_{0}\}$.
\end{thm}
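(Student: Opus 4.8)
The plan is to mirror the proof of \cref{thm:conv-apd-im-x-im-l}, starting from the decomposition $\mathcal E_{k+1}-\mathcal E_k=I_1+I_2+I_3$ with $I_1,I_2,I_3$ as in \cref{eq:I1-I2-I3}, and to recycle the one-iteration computation behind \cref{thm:conv-apd-ex-x-im-l}. First I would record the invariance $\{(x_k,y_k,v_k)\}\subset\mathcal X$: since $x_k,v_k\in\mathcal X$, relation \cref{eq:apd-ex-x-im-l-y-h-g} gives $y_k\in\mathcal X$, the argmin characterization \cref{eq:apd-ex-x-im-l-lk1-h-g-argmin-new} gives $v_{k+1}\in\mathcal X$, and then $x_{k+1}\in\mathcal X$ as the convex combination $x_{k+1}=(x_k+\alpha_kv_{k+1})/(1+\alpha_k)$ coming from \cref{eq:apd-ex-x-im-l-x-h-g}; induction closes the claim. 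I would then fix $p_{k+1}\in\partial g_{\mathcal X}(v_{k+1})$ realizing the inclusion \cref{eq:apd-ex-x-im-l-v-h-g}, and keep the identity \cref{eq:I1-im-x-im-l-simple} for $I_2$ verbatim, since the $\lambda$-update in \cref{eq:apd-ex-x-im-l-l-h-g} is the same implicit one as before.

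The core is the treatment of $I_3$. Substituting $\gamma_k(v_{k+1}-v_k)=\alpha_k\big[\mu_\beta(y_k-v_{k+1})-\nabla h_\beta(y_k)-p_{k+1}-A^{\top}\lambda_{k+1}\big]$ into the decomposition \cref{eq:I2-2-im-x-ex-l} (now with $\mu_\beta(y_k-v_{k+1})$ in place of $\mu_\beta(x_{k+1}-v_{k+1})$ and $\nabla h_\beta(y_k)+p_{k+1}$ in place of $\xi_{k+1}$), the term $-\alpha_k\dual{A^{\top}\lambda_{k+1},v_{k+1}-x^*}$ offsets the cross term $\alpha_k\dual{Av_{k+1}-b,\lambda_{k+1}-\lambda^*}$ of $I_2$, leaving the single coupled quantity $-\alpha_k\dual{q,v_{k+1}-x^*}$ with $q:=\nabla h_\beta(y_k)+p_{k+1}+A^{\top}\lambda^*$, together with the $\mu_\beta$-cross term expanded via \cref{eq:3id}. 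The decisive algebraic step is the splitting $\alpha_k(v_{k+1}-x^*)=\alpha_k(x_{k+1}-x^*)+(x_{k+1}-x_k)$ from \cref{eq:apd-ex-x-im-l-x-h-g}, after which I bound $-\dual{q,\cdot}$ by three elementary inequalities: the descent lemma for $h_\beta$ at $y_k$ evaluated at $x_{k+1}$, the $\mu_\beta$-strong convexity of $h_\beta$ at $y_k$ against $x^*$, and the convexity of $g_{\mathcal X}$ at $v_{k+1}$ used jointly with Jensen's inequality $\,(1+\alpha_k)g_{\mathcal X}(x_{k+1})\leqslant g_{\mathcal X}(x_k)+\alpha_kg_{\mathcal X}(v_{k+1})$. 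The $h_\beta(y_k)$ values then cancel, all pieces of $I_1$ are absorbed exactly, and (using $Ax^*=b$) one arrives at $-\alpha_k\big[\mathcal L_\beta(x_{k+1},\lambda^*)-\mathcal L_\beta(x^*,\lambda_{k+1})\big]$, i.e.\ $-\alpha_k$ times the objective-gap part of $\mathcal E_{k+1}$.

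The main obstacle I expect is controlling the linearization error $\tfrac{(1+\alpha_k)L_\beta}{2}\nm{x_{k+1}-y_k}^2$ produced by the descent lemma. The key identity is $x_{k+1}-y_k=\tfrac{\alpha_k}{1+\alpha_k}(v_{k+1}-v_k)$, obtained by combining \cref{eq:apd-ex-x-im-l-y-h-g,eq:apd-ex-x-im-l-x-h-g}; with the step-size relation $L_\beta\alpha_k^2=\gamma_k$ this error becomes $\tfrac{\gamma_k}{2(1+\alpha_k)}\nm{v_{k+1}-v_k}^2$, which is dominated by the negative term $-\tfrac{\gamma_k}{2}\nm{v_{k+1}-v_k}^2$ already present in $I_3$. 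Simultaneously the $\mu_\beta$-square bookkeeping must close: the $\nm{y_k-x^*}^2$ terms cancel between the strong-convexity inequality and \cref{eq:3id}, the residual $-\tfrac{\alpha_k\mu_\beta}{2}\nm{y_k-v_{k+1}}^2$ is discarded, and the $\nm{v_{k+1}-x^*}^2$ contributions combine with the $\tfrac{\alpha_k(\mu_\beta-\gamma_{k+1})}{2}\nm{v_{k+1}-x^*}^2$ coefficient of $I_3$ into exactly $-\tfrac{\alpha_k\gamma_{k+1}}{2}\nm{v_{k+1}-x^*}^2$. Adding the $-\tfrac{\alpha_k\theta_{k+1}}{2}\nm{\lambda_{k+1}-\lambda^*}^2$ from $I_2$ and dropping the surplus negative squares yields $\mathcal E_{k+1}-\mathcal E_k\leqslant-\alpha_k\mathcal E_{k+1}$, which is \cref{eq:diff-Ek-ex-x-im-l-correc}.

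Finally, \cref{eq:conv-Axk-b-ex-x-im-l,eq:conv-Lk-ex-x-im-l,eq:conv-fk-ex-x-im-l} follow as in \cref{thm:conv-apd-im-x-im-l}: the contraction gives $\mathcal E_k\leqslant\theta_k\mathcal E_0$ and hence $\nm{\lambda_k-\lambda^*}^2\leqslant2\mathcal E_0$, while the $\lambda$-update telescopes to $\lambda_k-\theta_k^{-1}(Ax_k-b)=\lambda_0-(Ax_0-b)$, from which the feasibility and objective-residual bounds are read off through $\mathcal R_0$ as defined in \cref{eq:R0}. For the decay estimate \cref{eq:bk-est-ex-x-im-l} I would use $\alpha_k=\sqrt{\gamma_k/L_\beta}$ in parallel with \cref{eq:est-1}: the monotonicity $\gamma_k\geqslant\gamma_0\theta_k$ (derived from the recursion \cref{eq:apd-im-gamma-theta} exactly as in \cref{thm:conv-apd-im-x-ex-l}) lower-bounds $\tfrac{1}{\sqrt{\theta_{k+1}}}-\tfrac{1}{\sqrt{\theta_k}}$ by a constant multiple of $\sqrt{\gamma_0/L_\beta}$ (bounding $\sqrt{1+\alpha_k}$ through $\gamma_k\leqslant\gamma_{\max}$), which telescopes to the $O(L_\beta/k^2)$ branch, whereas $\gamma_k\geqslant\gamma_{\min}$ gives $\theta_{k+1}/\theta_k=(1+\alpha_k)^{-1}\leqslant(1+\sqrt{\gamma_{\min}/L_\beta})^{-1}$ and hence the geometric branch; taking the minimum furnishes \cref{eq:bk-est-ex-x-im-l}.
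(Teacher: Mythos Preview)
Your proposal is correct and follows essentially the same route as the paper: the same $I_1+I_2+I_3$ decomposition from \cref{eq:I1-I2-I3}, the identity \cref{eq:I1-im-x-im-l-simple} for $I_2$, and for $I_3$ the expansion through the $v$-update together with the relation $(1+\alpha_k)(x_{k+1}-y_k)=\alpha_k(v_{k+1}-v_k)$ so that the descent-lemma error $\tfrac{(1+\alpha_k)L_\beta}{2}\nm{x_{k+1}-y_k}^2$ is absorbed by the spare $-\tfrac{\gamma_k}{2}\nm{v_{k+1}-v_k}^2$ under $L_\beta\alpha_k^2=\gamma_k$. The only slip is bookkeeping: your ``three elementary inequalities'' need a fourth, the plain convexity $\dual{\nabla h_\beta(y_k),x_k-y_k}\leqslant h_\beta(x_k)-h_\beta(y_k)$, to produce the $h_\beta(x_k)$ term that cancels the $h$-part of $I_1$ and makes the $h_\beta(y_k)$-cancellation you invoke actually close (the paper uses exactly this inequality in the display just before \cref{eq:diff-ex-x-im-l-h-g}).
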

\begin{proof}
	The fact $\{(x_k,y_k,v_k)\}\subset \mathcal X$ has been showed above. As before, we focus on the difference 
	$	\mathcal E_{k+1}-\mathcal E_{k} = I_1+I_2+I_3$, where $I_1,I_2$ and $I_3$ are defined in \cref{eq:I1-I2-I3}.
	
	For the first term $I_1$, we have
	\begin{equation}
		\label{eq:I1-ex-x-im-l-h-g}
		I_1 = h_\beta(x_{k+1})-h_\beta(x_k)+ g(x_{k+1})-g(x_k)+\dual{\lambda^*,A(x_{k+1}-x_k)},
	\end{equation}
	and the identity \cref{eq:I1-im-x-im-l-simple} for $I_2$ holds true here.
	
	For $I_3$, we shall begin with \cref{eq:I2-1-im-x-ex-l}, i.e.,
	\[
	\begin{aligned}
		I_3
		= {}&\frac{\alpha_k(\mu_\beta-\gamma_{k+1})}{2}
		\nm{v_{k+1}-x^*}^2
		-	\frac{\gamma_{k}}2\nm{v_{k+1}-v_k}^2
		+\gamma_k\dual{v_{k+1} - v_{k}, v_{k+1} - x^{*}}.
	\end{aligned}
	\]
	From \cref{eq:apd-ex-x-im-l-lk1-h-g-argmin}, it is not hard to obtain the necessary optimal condition of $v_{k+1}$ (see \cite[Eq (2.9) for instance]{nesterov_gradient_2013}):
	\[
	\dual{\frac{\tau_k}{\alpha_k}(v_{k+1}-w_k)+\zeta_{k+1}+A^{\top}\lambda_{k+1}+	\nabla f_\beta(y_k),\,v_{k+1}-x}\leqslant 0,
	\]
	for all $x\in\mathcal X$, where $\tau_k=\gamma_k+\mu_\beta\alpha_k,\,w_{k}={}(\gamma_kv_{k}+\mu_\beta\alpha_ky_k)/\tau_k$ and $\zeta_{k+1}\in\partial g(v_{k+1})$. Particularly, we have
	\[
	\begin{aligned}
		\gamma_{k}\dual{v_{k+1}-v_k,v_{k+1}-x^*}
		\leqslant
		{}&\mu_\beta\alpha_k\dual{y_k-v_{k+1},v_{k+1}-x^*}\\
		{}&\,	-\alpha_k\dual{\zeta_{k+1}+A^{\top}\lambda_{k+1}+\nabla h_\beta(y_k),v_{k+1}-x^*}.
	\end{aligned}
	\]
	By \cref{eq:3id}, the first cross term in the above inequality is rewritten as follows
	\[
	\begin{aligned}
		\mu_\beta\alpha_k	\dual{y_{k} - v_{k+1}, v_{k+1} - x^{*}}
		={}&\frac{\mu_\beta\alpha_k}{2}
		\left(\left\|y_{k}-x^{*}\right\|^{2}
		-\|y_{k}-v_{k+1}\|^{2}
		-\left\|v_{k+1}-x^{*}\right\|^{2}\right)\\
		\leqslant{}&\frac{\mu_\beta\alpha_k}{2}
		\left(\left\|y_{k}-x^{*}\right\|^{2}
		-\left\|v_{k+1}-x^{*}\right\|^{2}\right).
	\end{aligned}
	\]
	Thanks to the extrapolation step \eqref{eq:apd-ex-x-im-l-x-h-g}, it holds that
	\begin{equation*}
		\begin{aligned}
			&	-\alpha_k\dual{ A^{\top}\lambda_{k+1},v_{k+1}-x^*}
			=-\alpha_k\dual{\lambda_{k+1},Av_{k+1}-b}\\
			=&-\alpha_k\dual{\lambda_{k+1}-\lambda^*,Av_{k+1}-b}
			-\alpha_k\dual{\lambda^*,Av_{k+1}-b}\\
			=&-\alpha_k\dual{\lambda_{k+1}-\lambda^*,Av_{k+1}-b}
			-\alpha_k\dual{\lambda^*,Ax_{k+1}-b}-\dual{\lambda^*,A(x_{k+1}-x_k)}.
		\end{aligned}
	\end{equation*}
	This together with the convexity of $g$ and the fact $\{v_k\}\subset \mathcal X$ gives 
	\[
	\begin{aligned}
		{}&
		-\alpha_k \dual{\zeta_{k+1}+A^{\top}\lambda_{k+1}, v_{k+1} - x^{*}}\\
		\leqslant &-\alpha_k\dual{\lambda_{k+1}-\lambda^*,Av_{k+1}-b}
		-\alpha_k\dual{\lambda^*,Ax_{k+1}-b}-\dual{\lambda^*,A(x_{k+1}-x_k)}\\
		{}&\quad-\alpha_k(g(x_{k+1})-g(x^*))-\alpha_k(g(v_{k+1})-g(x_{k+1})).
	\end{aligned}
	\]
	According to the update for $y_{k}$ (cf. \eqref{eq:apd-ex-x-im-l-y-h-g}), we find
	\begin{align*}
		&
		-\alpha_k \dual{ \nabla h_\beta(y_k), v_{k+1} - x^{*}} \\
		=&-\alpha_k\dual{\nabla h_\beta(y_k),v_{k+1} - v_k} 
		-\alpha_k\dual{  \nabla h_\beta(y_k), v_{k} - x^{*}}\\
		=&-\alpha_k\dual{\nabla h_\beta(y_k),v_{k+1} - v_k}
		- \dual{ \nabla h_\beta(y_k), y_{k} - x_{k}} 
		- \alpha_k\dual{  \nabla h_\beta(y_k), y_{k} - x^{*}}.
	\end{align*}
	As $h_\beta\in\mathcal S_{\mu_\beta}^1(\mathcal X)$, by the fact $\{(x_k,y_k)\}\subset \mathcal X$, it follows that
	\begin{align*}
		{}&
		- \dual{ \nabla h_\beta(y_k), y_{k} - x_{k}} 
		- \alpha_k\dual{  \nabla h_\beta(y_k), y_{k} - x^{*}}\\
		\leqslant {}&h_\beta(x_k)-h_\beta(y_k)
		- \alpha_k\left(h_\beta(y_k)-h_\beta(x^*)\right) 
		-\frac{\mu_\beta\alpha_k}{2}\nm{x^*-y_k}^2\\
		={}&h_\beta(x_{k})-h_\beta(x_{k+1}) +
		(1+\alpha_k)\left(h_\beta(x_{k+1})-h_\beta(y_{k})\right)
		\\		
		{}&\quad	- \alpha_k\left(h_\beta(x_{k+1})-h_\beta(x^*)\right) 
		-\frac{\mu_\beta\alpha_k}{2}\nm{x^*-y_k}^2.
	\end{align*}
	
	Hence, summarizing the above detailed expansions yields the estimate of $I_3$ and by a careful but not hard rearrangement of all the bounds from $I_1$ to $I_3$, we arrive at
	\begin{equation}\label{eq:diff-ex-x-im-l-h-g}
		\begin{aligned}
			\mathcal E_{k+1}-\mathcal E_{k}\leqslant &
			-\alpha_k\mathcal E_{k+1}
			-	\frac{\gamma_{k}}2\nm{v_{k+1}-v_k}^2
			-\frac{\theta_{k}}{2}\nm{\lambda_{k+1}-\lambda_{k}}^2\\
			{}&		+(1+\alpha_k)\left(h_\beta(x_{k+1})- h_\beta(y_{k})\right)	-\alpha_k\dual{\nabla h_\beta(y_k),v_{k+1} - v_k}\\
			{}&	\quad			+(1+\alpha_k)g(x_{k+1})-g(x_{k})	-\alpha_kg(v_{k+1}).		
		\end{aligned}
	\end{equation}
	Recalling \eqref{eq:apd-ex-x-im-l-x-h-g}, $x_{k+1}$ is a convex combination of $x_k$ and $v_{k+1}$ and the last line of \cref{eq:diff-ex-x-im-l-h-g} is nonpositive.
	Let us consider the second line. It is clear that \cite[Chapter 2]{nesterov_lectures_2018}
	\[
	h_\beta(x_{k+1})-h_\beta(y_k)\leqslant \dual{\nabla h_\beta(y_k),x_{k+1}-y_k} + \frac{L_\beta}{2}\nm{x_{k+1}-y_k}^2.
	\]
	From \eqref{eq:apd-ex-x-im-l-y-h-g} and \eqref{eq:apd-ex-x-im-l-x-h-g}, we obtain the relation $(1+\alpha_k)	(x_{k+1}-y_k) = \alpha_k(v_{k+1}-v_k)$,
	which together with the previous estimate gives
	\begin{equation}
		\label{eq:fk1-fk-ex-x-im-l}
		\begin{aligned}
			(1+\alpha_k)\left(h_\beta(x_{k+1})-h_\beta(y_{k})\right)
			-\alpha_k\dual{\nabla h_\beta(y_k),v_{k+1} - v_k}
			\leqslant 
			{}&
			\frac{L_\beta\alpha_k^2}{2(1+\alpha_k)}\nm{v_{k+1}-v_k}^2.
		\end{aligned}
	\end{equation}
	Plugging this into \cref{eq:diff-ex-x-im-l-h-g} gives
	\[
	\mathcal E_{k+1}-\mathcal E_{k}\leqslant 
	-\alpha_k\mathcal E_{k+1}
	+\left(\frac{L_\beta\alpha_k^2}{2(1+\alpha_k)}
	-\frac{\gamma_k}{2}\right)
	\nm{v_{k+1}-v_k}^2
	\leqslant 
	-\alpha_k\mathcal E_{k+1},
	\]
	where we have used the relation $L_\beta\alpha_k^2=\gamma_{k}$.
	The above estimate  implies \cref{eq:diff-Ek-ex-x-im-l-correc}.
	
	The proof of \cref{eq:conv-fk-ex-x-im-l} is analogous to that of \cref{eq:conv-Axk-b-im-x-im-l}. Clearly, we have $\alpha_{k}=\sqrt{\gamma_{k}/L_\beta}\geqslant \sqrt{\gamma_{\min}/L_\beta}$. If $\mu_\beta=0$, then $\{\gamma_k\}$ and $\{\theta_k\}$ are equivalent in the sense that $\gamma_k = \gamma_0\theta_k$. Therefore, a similar discussion as that of \cref{eq:bk-est-im-x-ex-l} establishes the decay estimate \cref{eq:bk-est-ex-x-im-l}.
	This completes the proof of this theorem.
\end{proof}
\subsection{Main algorithm and its subproblem}
\label{sec:algo-sub-ex-x-im-l}
Let us reformulate \eqref{eq:apd-ex-x-im-l-l-h-g} and \cref{eq:apd-ex-x-im-l-lk1-h-g-argmin} as follows
\begin{equation}\label{eq:sub-ex-x-im-l-h-g}
	\left\{
	\begin{aligned}
		\lambda_{k+1}={}&\lambda_k+\alpha_k/\theta_k(Av_{k+1}-b),\\
		v_{k+1}	 ={} &\prox_{t_k g}^{\mathcal X}(z_k-t_kA^{\top}\lambda_{k+1}),
	\end{aligned}
	\right.
\end{equation}
where $w_{k}$ and $\tau_k$ are the same in \cref{eq:apd-ex-x-im-l-lk1-h-g-argmin}, $t_k = \alpha_k/\tau_k$ and $z_k = w_k-t_k\nabla h_\beta(y_k)$. 
In the sequel, we shall discuss how to solve the subproblem \cref{eq:sub-ex-x-im-l-h-g} by 
utilizing its special structure. In summary, there are two cases. The first one $g = 0$ and $\mathcal X=\R^n$ leads to a linear saddle point system \cref{eq:lk1-vk1-v-X=R} and further gives two SPD systems \cref{eq:lk1-SPD,eq:vk1-SPD}, both of which can be solved via PCG (\cref{algo:PCG}). For the rest general case, \cref{eq:sub-ex-x-im-l-h-g} is transformed into a nonlinear equation (cf. \cref{eq:lk1-nonlinear}) in terms of  $\lambda_{k+1}$ and it is possible to use the SsN method (\cref{algo:SsN}), which would be quite efficient provided that the problem itself is semismooth and has sparsity structure.

We put detailed discussions of the  subproblem \cref{eq:sub-ex-x-im-l-h-g} in the following part and summarize the corrected scheme \cref{eq:apd-ex-x-im-l-l-h-g} with the step size $L_\beta\alpha_k^2=\gamma_k$ in \cref{algo:Semi-APDFB-h-g}, which is called the semi-implicit accelerated primal-dual forward-backward (Semi-APDFB for short) method.
\begin{algorithm}[h]
	\caption{Semi-APDFB method for $f = h+g$ with $h\in\mathcal S_{\mu,L}^{1,1}(\mathcal X)$ and $g\in\mathcal S_0^0(\mathcal X)$}
	\label{algo:Semi-APDFB-h-g}
	\begin{algorithmic}[1] 
		\REQUIRE  $\beta\geqslant 0,\,\theta_0=1,\,\gamma_0>0,\, (x_0,v_0)\in\mathcal X\times \mathcal X,\,\lambda_0\in\R^m$.
		\STATE Set $\beta=0$ if $\sigma_{\min}(A)=0$.
		\STATE Let $\mu_\beta=\mu+\beta\sigma_{\min}^2(A)$ 
		and $L_\beta=L+\beta\nm{A}^2$.
		\FOR{$k=0,1,\ldots$}
		\STATE Choose step size $\alpha_k=\sqrt{\gamma_k/L_\beta}$.
		\STATE Update $\displaystyle \gamma_{k+1} = (\gamma_k+\mu_\beta\alpha_k)/(1+\alpha_k)$ and $\displaystyle \theta_{k+1}= \theta_k/(1+\alpha_k)$.
		\STATE Set $\displaystyle  \tau_k = \gamma_k+\mu_\beta\alpha_k$ and $\displaystyle  y_{k}={}(x_k+\alpha_kv_{k})/(1+\alpha_k)$.
		\STATE Set  $		w_{k}={}\tau_k^{-1}(\gamma_kv_{k}+\mu_\beta\alpha_ky_k)$ and $z_k = w_k-\alpha_k/\tau_k\nabla h_\beta(y_k)$.
		\IF{$g=0$ and $\mathcal X=\R^n$}
		\STATE Solve $(\lambda_{k+1},\,v_{k+1})$ from the linear saddle-point system \cref{eq:lk1-vk1-l-X=R}, which can be done by applying  \cref{algo:PCG} to either \cref{eq:lk1-SPD} or \cref{eq:vk1-SPD} with  suitable preconditioner $M$ and the tolerance $\varepsilon$.
		\ELSE
		\STATE Solve $\lambda_{k+1}$ from the nonlinear equation \cref{eq:lk1-nonlinear} via \cref{algo:SsN}.
		\STATE Update $\displaystyle  v_{k+1}={}\prox^{\mathcal X}_{t_k g}(z_k-t_kA^{\top}\lambda_{k+1})$.		
		\ENDIF
		\STATE Update $\displaystyle  x_{k+1}={}(x_k+\alpha_kv_{k+1})/(1+\alpha_k)$.
		\ENDFOR
	\end{algorithmic}
\end{algorithm}
\subsubsection{The case $g = 0$ and $\mathcal X=\R^n$}
Let us first treat this special case and take the opportunity here to present a practical PCG method. In this situation, \cref{eq:sub-ex-x-im-l-h-g} reads simply as follows
\begin{subnumcases}{}
	v_{k+1}	 ={}  z_k-t_kA^{\top}\lambda_{k+1},
	\label{eq:lk1-vk1-v-X=R}\\
	\lambda_{k+1}=\lambda_k+\alpha_k/\theta_k(Av_{k+1}-b).
	\label{eq:lk1-vk1-l-X=R}
\end{subnumcases}
Eliminating $v_{k+1}$ gives
\begin{equation}\label{eq:lk1-SPD}
	\left(\theta_kI+\alpha_kt_kAA^{\top}\right)\lambda_{k+1} 
	= \theta_{k}\lambda_k+\alpha_k(Az_k-b).
\end{equation}
On the other hand, we have
\begin{equation}\label{eq:vk1-SPD}
	\left(\theta_kI+\alpha_kt_kA^{\top}A\right)v_{k+1} 
	=\theta_kz_k-t_k
	A^{\top}(\theta_k\lambda_k-\alpha_kb).
\end{equation}

Practically, we can choose the one with smaller size and consider suitable efficient linear SPD solvers. In \cref{algo:PCG}, 
we present a practical PCG iteration (cf. \cite[Appendix B3]{shewchuk_introduction_1994}) for solving a given SPD system $Hd = e$ with the tolerance $\varepsilon$ and the preconditioner $M$ that is an SPD approximation of $H$ and easy to invert.
\begin{algorithm}[h]
	\caption{A Practical PCG for the SPD system $Hd = e$}
	\label{algo:PCG}
	\begin{algorithmic}[1] 
		\REQUIRE  ~~\\
		$H$: an SPD matrix, $M$: the preconditioner;\\
		$e$: the right hand side vector, $\varepsilon\in(0,1)$: the error tolerance.
		\ENSURE An approximation $d$ to $H^{-1} e$.
		\STATE Choose an initial guess $d_0$.
		\STATE Set maximum number of iterations 
		$i_{\max}$.
		\STATE $i = 0$,\,$r= e - Hd_0,\,p= M^{-1}r,\,\delta= \dual{r,p},\,\delta_0=\delta$.
		\WHILE{$i<i_{\max}$ and $\delta>\varepsilon^2\delta_0$}
		\STATE $\delta_{\rm old}=\delta,\,q = Hp,\,\alpha = \delta_{\rm old}/\dual{q,p},\,d = d + \alpha p$.
		\IF{$i$ is divisible by 50} 
		\STATE $r = e-H d$.
		\ELSE
		\STATE $r = r-\alpha q$.
		\ENDIF		
		\STATE $w = M^{-1}r,\,\delta = \dual{r,w},\,
		\beta= \delta/\delta_{\rm old},\,
		p = w + \beta p$.		
		\STATE $i = i+1$.
		\ENDWHILE
	\end{algorithmic}
\end{algorithm}
\subsubsection{The general case}
\label{sec:SsN}
Now, introduce a mapping $F_k:\R^m\to\R^m$ by that
\begin{equation*}
	F_k(\lambda) := 	\theta_{k}\lambda-\alpha_k A\prox_{t_k g}^{\mathcal X}(z_k-t_kA^{\top}\lambda) - r_k,
\end{equation*}
where $r_k = \theta_{k}\lambda_k-\alpha_kb$.
Then eliminating $v_{k+1}$ from \cref{eq:sub-ex-x-im-l-h-g}
gives a nonlinear equation
\begin{equation}
	\label{eq:lk1-nonlinear}
	F_k(\lambda_{k+1}) =0.
\end{equation}

Note that $\prox_{t_kg}^{\mathcal X}$ is nothing but the proximal operator of $g_{\mathcal X}=g+\delta_{\mathcal X}$. Hence, it is monotone and $1$-Lipschitz continuous. In fact, we have (cf. \cite[Proposition 12.27]{Bauschke2011})
\begin{equation*}
	\dual{\prox_{t_kg}^{\mathcal X}(x)-\prox_{t_kg}^{\mathcal X}(y),x-y}
	\geqslant \nm{\prox_{t_kg}^{\mathcal X}(x)-\prox_{t_kg}^{\mathcal X}(y)}^2,
\end{equation*}
for all $(x,y)\in\R^n\times \R^n$, which implies
\begin{equation}\label{eq:Lip-Fk-ex-x-im-l}
	\theta_{k}\nm{\lambda-\xi}^2\leqslant 
	\dual{F_k(\lambda)-F_k(\xi),\lambda-\xi}
	\leqslant
	\rho_{k}\nm{\lambda-\xi}^2
	\quad\forall\,(\lambda,\xi)\in\R^m\times \R^m,
\end{equation}
where $\rho_{k}=\theta_{k}+\alpha_kt_k\nm{A}^2$. 
Therefore $F_k$ is monotone and $\rho_{k}$-Lipschitz continuous. 

As conventional, denote by $g_{\mathcal X}^*$ the conjugate function of $g_{\mathcal X}$ and introduce
\begin{equation}\label{eq:cal-Fk}
	\mathcal F_k(\lambda): = 		\frac{\theta_{k}}{2}\nm{\lambda}^2-\dual{r_k,\lambda}
	+\alpha_k[g_{\mathcal X}^*]_{t_k} (z_k/t_k-A^{\top}\lambda),\quad 
	\text{ for all }\lambda\in\R^m,
\end{equation}
where for any $t>0,\,[g_{\mathcal X}^*]_{t}:\R^n\to\R^n$ stands for the Moreau--Yosida approximation of $g_{\mathcal X}^*$ with parameter $t>0$, i.e.,
\[
[g_{\mathcal X}^*]_{t}(x): = \min_{y\in\R^n}
\left\{
g_{\mathcal X}^*(y) + \frac{t}{2}\nm{y-x}^2
\right\},\quad\text{ for all }x\in\R^n.
\]
As it is well-known that (see \cite[Proposition 17.2.1]{Attouch_2014} for instance) $[g_{\mathcal X}^*]_{t_k}$ is convex and continuous differentiable over $\R^n$ and $\nabla[g_{\mathcal X}^*]_{t}(x) = t(x-\prox_{g^*_{\mathcal X}/t}(x))$, we may easily conclude that $\mathcal F_k$ defined by \cref{eq:cal-Fk} is also continuous differentiable over $\R^n$. Moreover, thanks to Moreau's decomposition (cf. \cite[Theorem 6.46]{beck_first-order_2017})
\begin{equation*}
	\prox_{tg_{\mathcal X}}(x)+	t\prox_{g^*_{\mathcal X}/t}(x/t)=x
	\quad\forall\,t>0,\,x\in\R^n,
\end{equation*}
an elementary calculation gives that $\nabla \mathcal F_k(\lambda)=F_k(\lambda) $. 
Whence, from \cref{eq:Lip-Fk-ex-x-im-l}, we have $\mathcal F_k\in\mathcal S_{\theta_{k},\rho_{k}}^{1,1} $, and \cref{eq:lk1-nonlinear} 
is nothing but the Euler equation for minimizing $\mathcal F_k$.

Denote by $\partial \prox_{t_k g}^{\mathcal X}(z_k-t_kA^{\top}\lambda) $ the Clarke subdifferential \cite[Definition 2.6.1]{clarke_optimization_1987} of the monotone Lipschitz continuous mapping $\prox_{t_k g}^{\mathcal X} $ at $z_k-t_kA^{\top}\lambda$. By \cite[Chapter 7]{Facchinei2003-v2}, for all $\lambda$, it is nonempty and any $S_k(\lambda)\in\partial \prox_{t_k g}^{\mathcal X}(z_k-t_kA^{\top}\lambda) $ is positive semidefinite. If such an $S_k(\lambda)$ is symmetric, then we define an SPD operator
\begin{equation*}
	\mathcal H_k(\lambda) := \theta_{k}I+\alpha_kt_kAS_k(\lambda)A^{\top},\quad \lambda\in\R^m.
\end{equation*}
The nonsmooth version 
of Newton's method for solving \cref{eq:lk1-nonlinear} is presented as follows
\begin{equation}\label{eq:SsN}
	\lambda^{j+1} = \lambda^j - \left[\mathcal H_k(\lambda^j)\right]^{-1}F_k(\lambda^j),
	\quad j\in\mathbb N.
\end{equation}
If $\prox_{t_k g}^{\mathcal X} $ is semismooth \cite[Chapter 7]{Facchinei2003-v2}, then so is $F_k$ (see \cite[Proposition 7.4.4]{Facchinei2003-v2}) and the local superlinear convergence of the iteration \cref{eq:SsN} can be found in \cite{Qi1993,Qi1993a}. 
For global convergence, we shall perform some line search procedure \cite{dennis_numerical_1996}.

Below, in \cref{algo:SsN}, we list a semi-smooth Newton method together with a line search procedure for solving \cref{eq:lk1-nonlinear}. In practical computation, the inverse operation in \cref{eq:SsN} shall be approximated by some iterative methods. Particularly, if $S_k(\lambda)$ (and $A$) has special structure such as sparsity that allows us to do cheap matrix-vector multiplication (cf. \cite{li_highly_2017}) or construct efficient preconditioners, then one can consider PCG, as mentioned previously.
\begin{algorithm}[H]
	\caption{SsN method for solving \cref{eq:lk1-nonlinear}}
	\label{algo:SsN}
	\begin{algorithmic}[1] 
		\STATE Choose $\nu\in(0,1/2)$ and $\delta\in(0,1)$.
		\STATE Choose initial guess $\lambda\in\R^m$.
		\FOR{$j=0,1,\ldots$}
		\STATE Set $\lambda_{\rm old} = \lambda$.
		\STATE Compute $S\in\partial \prox_{t_k g}^{\mathcal X}(z_k-t_kA^{\top}\lambda_{\rm old}) $.
		\STATE Let $\mathcal H= \theta_{k}I+\alpha_kt_kASA^{\top}$ and $e = -F_k(\lambda_{\rm old})$.
		\STATE Call \cref{algo:PCG} to obtain an approximation $d$ to $\mathcal H^{-1}e$. 
		\STATE Find the smallest $r\in\mathbb N$ such that $					\mathcal F_k(\lambda_{\rm old}+\delta^rd)\leqslant \mathcal F_k(\lambda_{\rm old})+\nu\delta^r\dual{F_k(\lambda_{\rm old}),d}$.
		\STATE Update $\lambda = \lambda_{\rm old} + \delta^r d$.
		\ENDFOR
	\end{algorithmic}
\end{algorithm}
\begin{remark}	
	Note that \cref{algo:SsN} is an inexact SsN method and thus, the inner problem \cref{eq:sub-ex-x-im-l-h-g} is solved approximately. 
	Needless to say, all the methods proposed in this work have their own inner problems and for practical computation, inexact approximation shall be considered. Also, inexact convergence rate analysis shall be established but not considered in the context. \hfill\ensuremath{\blacksquare}
\end{remark}

\section{A Corrected Explicit Forward-Backward Method}
\label{sec:apd-ex-x-ex-l}
Based on \cref{eq:apd-im-x-ex-l-l,eq:apd-ex-x-im-l-x-h-g}, we consider the following scheme
\begin{subnumcases}{}
	\theta_k \frac{\lambda_{k+1}-\lambda_k}{\alpha_k} = {} \nabla_\lambda \mathcal L_\beta(v_{k+1},\lambda_{k+1}),\label{eq:apd-ex-x-ex-l-l-h-g}\\
	\frac{y_{k}-x_{k}}{\alpha_k}={}v_{k}-y_{k},\label{eq:apd-ex-x-ex-l-y-h-g}\\
	\gamma_k \frac{v_{k+1}-v_k}{\alpha_k} \in {}\mu_\beta(y_{k}- v_{k+1}) - 
	\left(\nabla h_\beta(y_k)+ \partial g_{\mathcal X}(v_{k+1})+ A^{\top}\whk\right),
	\label{eq:apd-ex-x-ex-l-v-h-g}\\
	\frac{x_{k+1}-x_{k}}{\alpha_k}={}v_{k+1}-x_{k+1},
	\label{eq:apd-ex-x-ex-l-x-h-g}
\end{subnumcases}
where $\whk$ is chosen from \cref{eq:barl-k} and the system \cref{eq:theta-gamma} is discretized via \cref{eq:apd-im-gamma-theta}. This method can be viewed as a further explicit discretization of \cref{eq:apd-ex-x-im-l-l-f}. Indeed, in step \eqref{eq:apd-ex-x-ex-l-v-h-g}, the operator splitting is still applied to $f = h+g$ but $\lambda_{k+1}$ is replaced by $\whk$. Thus $v_{k+1}$ and $\lambda_{k+1}$ are decoupled with each other, and this leads to
\begin{equation}\label{eq:apd-ex-x-ex-l-lk1-h-g-argmin}
	v_{k+1} = \mathop{\argmin}_{\mathcal X}\left\{
	g(v)+\big\langle A^{\top}\whk+	\nabla h_\beta(y_k),v\big\rangle+\frac{\tau_k}{2\alpha_k}\nm{v-w_k}^2
	\right\},
\end{equation}
where $w_{k}$ and $\tau_k$ are the same as that in \cref{eq:apd-ex-x-im-l-lk1-h-g-argmin}. Comparing \cref{eq:apd-ex-x-im-l-lk1-h-g-argmin-new,eq:apd-ex-x-ex-l-lk1-h-g-argmin}, we find the quadratic penalty term $\nm{Av-b}^2$ has been linearized.

Below, we give the convergence rate analysis of the explicit scheme \cref{eq:apd-ex-x-ex-l-x-h-g}.
\begin{thm}\label{thm:conv-apd-ex-x-ex-l-h-g}
	Assume $f=h+g$ where $h\in\mathcal S_{\mu,L}^{1,1}(\mathcal X)$ with $0\leqslant \mu\leqslant L<\infty$ and $g\in\mathcal S_0^{0}(\mathcal X)$.
	Given initial value $x_0,v_0\in \mathcal X$, the corrected explicit scheme \cref{eq:apd-ex-x-ex-l-x-h-g} generates $\{(x_k,y_k,v_k)\}\subset \mathcal X$ and if  \begin{equation}\label{eq:ak-ex-x-ex-l-h-g}
		\big(L_\beta+\nm{A}^2\big)\alpha_k^2
		=		\gamma_{k}\theta_{k} ,
	\end{equation}
	then we have the contraction
	\begin{equation}\label{eq:diff-Ek-ex-x-ex-l-h-g}
		\mathcal E_{k+1}-	\mathcal E_k
		\leqslant -\alpha_k		\mathcal E_{k+1}, \quad\text{for all~}k\in\mathbb N.
	\end{equation}
	Moreover, it holds that
	\begin{subnumcases}
		{}\nm{Ax_{k}-b}\leqslant \theta_k\mathcal R_0,\label{eq:conv-Axk-b-ex-x-ex-l}\\
		0\leqslant \mathcal L(x_{k},\lambda^*)-	\mathcal L(x^*,\lambda_{k})
		\leqslant \theta_k\mathcal E_0,
		\label{eq:conv-Lk-ex-x-ex-l}
		\\	
		{}\snm{f(x_k)-f(x^*)}
		\leqslant  \theta_k\left(\mathcal E_0+\mathcal R_0\nm{\lambda^*}\right),
		\label{eq:conv-fk-ex-x-ex-l}
	\end{subnumcases}
	where $\mathcal R_0$ is defined by \cref{eq:R0} and 
			\begin{equation}\label{eq:bk-ex-x-ex-l-correc}
		\small		\theta_k
		\leqslant
		\min\left\{
		\frac{Q}{	\sqrt{\gamma_0}k+Q}
		,\,
		\frac{Q^2}{(\sqrt{\gamma_{\min}}k+Q)^2}
		\right\}\quad\text{with}\,\,Q = 3\sqrt{L_\beta+\nm{A}^2}+\sqrt{\gamma_{\max}}.
	\end{equation}
\end{thm}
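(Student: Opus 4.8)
The plan is to treat the corrected explicit scheme \cref{eq:apd-ex-x-ex-l-x-h-g} as the simultaneous combination of two modifications already analyzed: the explicit choice $\whk$ from \cref{thm:conv-apd-im-x-ex-l} and the forward--backward (operator splitting) discretization of $f=h+g$ from \cref{thm:apd-ex-x-im-l-h-g}. First I would record that $\{(x_k,y_k,v_k)\}\subset\mathcal X$, exactly as for \cref{eq:apd-ex-x-im-l-lk1-h-g-argmin}: $y_k$ is a convex combination of $x_k,v_k\in\mathcal X$, the constrained minimization \cref{eq:apd-ex-x-ex-l-lk1-h-g-argmin} returns $v_{k+1}\in\mathcal X$, and $x_{k+1}$ is a convex combination of $x_k$ and $v_{k+1}$.

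Next I would split $\mathcal E_{k+1}-\mathcal E_k = I_1+I_2+I_3$ with $I_1,I_2,I_3$ as in \cref{eq:I1-I2-I3}. The term $I_1$ is \cref{eq:I1-ex-x-im-l-h-g}. For $I_2$, since \cref{eq:apd-ex-x-ex-l-v-h-g} uses $\whk$ rather than $\lambda_{k+1}$, I would invoke the explicit estimate \cref{eq:I2-im-x-ex-l}, producing the surplus square $\tfrac{\theta_k}{2}\nm{\lambda_{k+1}-\whk}^2$ together with the cross term $\alpha_k\dual{Av_{k+1}-b,\whk-\lambda^*}$. For $I_3$ I would rerun the computation of \cref{thm:apd-ex-x-im-l-h-g} unchanged, except that the optimality condition for $v_{k+1}$ now comes from \cref{eq:apd-ex-x-ex-l-lk1-h-g-argmin} and carries $A^{\top}\whk$ in place of $A^{\top}\lambda_{k+1}$. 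Writing $-\alpha_k\dual{A^{\top}\whk,v_{k+1}-x^*}=-\alpha_k\dual{\whk-\lambda^*,Av_{k+1}-b}-\alpha_k\dual{\lambda^*,Av_{k+1}-b}$, the first piece cancels the cross term left over from $I_2$, while the second is absorbed through the extrapolation \cref{eq:apd-ex-x-ex-l-x-h-g} exactly as before; linearizing $h$ via \cref{eq:fk1-fk-ex-x-im-l} contributes $\tfrac{L_\beta\alpha_k^2}{2(1+\alpha_k)}\nm{v_{k+1}-v_k}^2$.

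Collecting the pieces I expect
\[
\mathcal E_{k+1}-\mathcal E_k \leqslant -\alpha_k\mathcal E_{k+1} -\frac{\gamma_k}{2}\nm{v_{k+1}-v_k}^2 +\frac{L_\beta\alpha_k^2}{2(1+\alpha_k)}\nm{v_{k+1}-v_k}^2 +\frac{\theta_k}{2}\nm{\lambda_{k+1}-\whk}^2,
\]
where, as in \cref{thm:conv-apd-im-x-ex-l}, the identity $\lambda_{k+1}-\whk=(\alpha_k/\theta_k)A(v_{k+1}-v_k)$ bounds the last square by $\tfrac{\nm{A}^2\alpha_k^2}{2\theta_k}\nm{v_{k+1}-v_k}^2$. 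The crux is the step-size relation \cref{eq:ak-ex-x-ex-l-h-g}: combined with $\theta_k\leqslant\theta_0=1\leqslant 1+\alpha_k$ (so that $1/(1+\alpha_k)\leqslant 1/\theta_k$) it gives
\[
\frac{L_\beta\alpha_k^2}{2(1+\alpha_k)}+\frac{\nm{A}^2\alpha_k^2}{2\theta_k}\leqslant \frac{(L_\beta+\nm{A}^2)\alpha_k^2}{2\theta_k}=\frac{\gamma_k}{2},
\]
so both surplus terms are exactly swallowed by $-\tfrac{\gamma_k}{2}\nm{v_{k+1}-v_k}^2$, yielding \cref{eq:diff-Ek-ex-x-ex-l-h-g}. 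The estimates \cref{eq:conv-Axk-b-ex-x-ex-l,eq:conv-Lk-ex-x-ex-l,eq:conv-fk-ex-x-ex-l} then follow from the contraction precisely as in \cref{thm:conv-apd-im-x-im-l} (through $\mathcal E_k\leqslant\theta_k\mathcal E_0$ and the telescoped relation for $\lambda_k-\theta_k^{-1}(Ax_k-b)$). Finally, since \cref{eq:ak-ex-x-ex-l-h-g} reads $(L_\beta+\nm{A}^2)\alpha_k^2=\gamma_k\theta_k$, the decay bound \cref{eq:bk-ex-x-ex-l-correc} follows by repeating the derivation of \cref{eq:bk-est-im-x-ex-l} verbatim with $\nm{A}$ replaced by $\sqrt{L_\beta+\nm{A}^2}$, accounting for $Q=3\sqrt{L_\beta+\nm{A}^2}+\sqrt{\gamma_{\max}}$.

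The step I expect to be the main obstacle is the bookkeeping inside $I_3$: one must check that the lone cross term $\alpha_k\dual{Av_{k+1}-b,\whk-\lambda^*}$ surviving from the explicit $I_2$ is cancelled exactly by the matching piece arising when $A^{\top}\whk$ is split in the $v_{k+1}$-optimality condition, and that the simultaneous presence of $\whk$ (in the $\lambda$-update) and $\lambda_{k+1}$ (in the Lyapunov function) introduces no further uncancelled mismatch. Once this is confirmed, combining the two surplus squares under \cref{eq:ak-ex-x-ex-l-h-g} is routine.
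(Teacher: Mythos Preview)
Your proposal is correct and follows essentially the same route as the paper's proof: the same $I_1+I_2+I_3$ splitting, the explicit bound \cref{eq:I2-im-x-ex-l} for $I_2$, the $I_3$ expansion of \cref{thm:apd-ex-x-im-l-h-g} with $\lambda_{k+1}$ replaced by $\whk$, the descent estimate \cref{eq:fk1-fk-ex-x-im-l}, and the identity $\lambda_{k+1}-\whk=(\alpha_k/\theta_k)A(v_{k+1}-v_k)$. Your final step-size verification via $1/(1+\alpha_k)\leqslant 1/\theta_k$ is exactly the paper's bound $\theta_{k+1}\leqslant 1$ rewritten (since $\theta_{k+1}/\theta_k=1/(1+\alpha_k)$), and the bookkeeping you flag as the main obstacle---the cancellation of $\alpha_k\dual{Av_{k+1}-b,\whk-\lambda^*}$---is precisely what the paper absorbs into the phrase ``with $\lambda_{k+1}$ being $\whk$.''
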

\begin{proof}
	As \cref{eq:apd-ex-x-ex-l-lk1-h-g-argmin} promises $\{v_k\}\subset \mathcal X$, it is easily concluded from \eqref{eq:apd-ex-x-ex-l-y-h-g} and \eqref{eq:apd-ex-x-ex-l-x-h-g} that $\{(x_k,y_k)\}\subset \mathcal X$ as long as $x_0,v_0\in \mathcal X$.
	
	The proof of \cref{eq:diff-Ek-ex-x-ex-l-h-g} is almost in line with that of \cref{eq:diff-Ek-ex-x-im-l-correc}. The identity \cref{eq:I1-ex-x-im-l-h-g} of the first term $I_1$ leaves unchanged here. For $I_2$, we mention the estimate \cref{eq:I2-im-x-ex-l}:
	\[
	I_2\leqslant -\frac{\alpha_k\theta_{k+1}}{2}\nm{\lambda_{k+1}-\lambda^*}^2
	+\frac{\theta_{k}}{2}\big\|\lambda_{k+1}-\whk\big\|^2
	+\alpha_{k}\big\langle Av_{k+1}-b,\whk-\lambda^*\big\rangle.
	\]
	The expansion of $I_3$ is tedious but the same as what we did in the proof of \cref{thm:apd-ex-x-im-l-h-g}, with $\lambda_{k+1}$ being $\whk$. For simplicity, we will not go through the details here once again. Consequently, one observes that \cref{eq:diff-ex-x-im-l-h-g} now turns into
	\[
	\begin{aligned}
		\mathcal E_{k+1}-\mathcal E_{k}\leqslant &
		-\alpha_k\mathcal E_{k+1}
		-	\frac{\gamma_{k}}2\nm{v_{k+1}-v_k}^2
		+\frac{\theta_{k}}{2}\big\|\lambda_{k+1}-\whk\big\|^2\\
		{}&	\quad	+(1+\alpha_k)\left(h_\beta(x_{k+1})-h_\beta(y_{k})\right)	-\alpha_k\dual{\nabla h_\beta(y_k),v_{k+1} - v_k},
	\end{aligned}
	\]
	where the last line in terms of $g$ is nonpositive and has been dropped sine $x_{k+1}$ is a convex combination of $x_k$ and $v_{k+1}$.
	Noticing that the relation $(1+\alpha_k)	(x_{k+1}-y_k) = \alpha_k(v_{k+1}-v_k)$ holds true for \cref{eq:apd-ex-x-ex-l-l-h-g}, we still have the estimate \cref{eq:fk1-fk-ex-x-im-l} here. This implies 
	\[
	\begin{aligned}
		\mathcal E_{k+1}-\mathcal E_{k}\leqslant &
		-\alpha_k\mathcal E_{k+1}	+\frac{\theta_{k}}{2}\big\|\lambda_{k+1}-\whk\big\|^2
		+\left(\frac{L_\beta\alpha_k^2}{2(1+\alpha_k)}
		-\frac{\gamma_k}{2}\right)
		\nm{v_{k+1}- v_k}^2.
	\end{aligned}
	\]
	By \cref{eq:barl-k} and \eqref{eq:apd-ex-x-ex-l-l-h-g}, we have $\lambda_{k+1}-	\whk =\alpha_k/\theta_k A(v_{k+1}-v_k)$ and it follows that
	\begin{equation}\label{eq:last-est-ex-ex}
		\begin{aligned}
			\mathcal E_{k+1}-\mathcal E_{k}\leqslant {}&
			-\alpha_k\mathcal E_{k+1}
			+\frac{1}{2\theta_{k}}
			\big(\alpha_k^2(L_\beta\theta_{k+1}
			+\nm{A}^2)-\gamma_k\theta_k\big)
			\nm{v_{k+1}-v_k}^2.
		\end{aligned}
	\end{equation}
	Thanks to \cref{eq:ak-ex-x-ex-l-h-g} and the evident fact $\theta_{k+1}\leqslant \theta_0=1$, the last term is nonpositive, which proves \cref{eq:diff-Ek-ex-x-ex-l-h-g}.
	
	Proceeding as before, it is not hard to establish \cref{eq:conv-fk-ex-x-ex-l}. 
	As the decay estimate \cref{eq:bk-ex-x-ex-l-correc} is similar with \cref{eq:bk-est-im-x-ex-l}, 
	we conclude the proof of this theorem.
\end{proof}
\begin{remark}
	From the estimate \cref{eq:last-est-ex-ex}, one may observe the fancy choice 
	\[
	\big(	L_\beta\theta_{k+1}+\nm{A}^2\big)\alpha_k^2=\gamma_k\theta_k.
	\]
	This gives an algebraic equation in terms of $\alpha_k$ with degree three because $\theta_{k+1} = \theta_{k}/(1+\alpha_k)$. It is not a problem to determine $\{\alpha_k\}$ but such a sequence does not improve the asymptotic decay rate of $\{\theta_k\}$, as given in \cref{eq:bk-ex-x-ex-l-correc}. Hence, we chose a more simple one \cref{eq:ak-ex-x-ex-l-h-g}.
\hfill\ensuremath{\blacksquare}
\end{remark}

Now let us summarize \cref{eq:apd-ex-x-ex-l-x-h-g} together with the step size \cref{eq:ak-ex-x-ex-l-h-g} in  \cref{algo:Ex-APDFB-h-g}, which is called the explicit accelerated primal-dual forward-backward (Ex-APDFB) method. 
\begin{algorithm}[H]
	\caption{Ex-APDFB method for $f = h+g$ with $h\in\mathcal S_{\mu,L}^{1,1}(\mathcal X)$ and $g\in\mathcal S_0^0(\mathcal X)$}
	\label{algo:Ex-APDFB-h-g}
	\begin{algorithmic}[1] 
		\REQUIRE  $\beta\geqslant 0,\,\theta_0=1,\,\gamma_0>0,\, (x_0,v_0)\in\mathcal X\times \mathcal X,\,\lambda_0\in\R^m$.
		\STATE Set $\beta=0$ if $\sigma_{\min}(A)=0$, and let $\mu_\beta=\mu+\beta\sigma_{\min}^2(A)$.
		\STATE Set $L_\beta=L+\beta\nm{A}^2$ and $S_\beta=L_\beta+\nm{A}^2$.
		\FOR{$k=0,1,\ldots$}
		\STATE Choose step size $\alpha_k=\sqrt{\theta_{k}\gamma_k/S_\beta}$.
		\STATE Update $\displaystyle \gamma_{k+1} = (\gamma_k+\mu_\beta\alpha_k)/(1+\alpha_k)$ and $\displaystyle \theta_{k+1}= \theta_k/(1+\alpha_k)$.
		\STATE Set $\displaystyle  \tau_k = \gamma_k+\mu_\beta\alpha_k,\,\eta_k=\alpha_k/\tau_k$ and $y_{k}={}(x_k+\alpha_kv_{k})/(1+\alpha_k)$.
		\STATE Set $\displaystyle  	w_{k}={}\tau_k^{-1}(\gamma_kv_{k}+\mu_\beta\alpha_ky_k)$ and $\whk
		=\lambda_k+\alpha_k/\theta_k\left(
		Av_k-b		\right)$.
		\STATE Update $\displaystyle	v_{k+1}
		=\prox_{\eta_kg}^{\mathcal X}(w_k-\eta_k(\nabla h_\beta(y_k)+A^{\top}\whk))
		$.
		\STATE Update $\displaystyle  x_{k+1}={}(x_k+\alpha_kv_{k+1})/(1+\alpha_k)$.
		\STATE Update $\lambda_{k+1}=
		\lambda_k+\alpha_k/\theta_k\left(
		Av_{k+1}-b
		\right)$.
		\ENDFOR
	\end{algorithmic}
\end{algorithm}
To the end, we mention some comparisons with related works.
In view of the estimate \cref{eq:bk-ex-x-ex-l-correc}, we have
\begin{equation}\label{eq:rate-ex-apd-fb}
	{}\snm{f(x_k)-f(x^*)}	+ \nm{Ax_k-b}	\leqslant 
	C
	\left\{
	\begin{aligned}
		&		\frac{\nm{A}+\sqrt{L}}{k},&&\mu_\beta = 0,\\
		&	\frac{\nm{A}^2+L}{k^2},&&\mu_\beta > 0.
	\end{aligned}
	\right.
\end{equation}
This may give a negative answer to the question addressed in the conclusion part of  \cite{Xu2017}. That is, can we linearize the augmented term and maintain the nonergodic rate $O(1/k^2)$ 
under the assumption that $f=h+g$ is convex and $h$ has $L$-Lipschitz continuous gradient? According to \cref{eq:rate-ex-apd-fb}, if $\mu_\beta>0$, which means either  $\mu>0$ or $\sigma_{\min}(A)>0$ (i.e., $A$ has full column rank), then the rate $O(1/k^2)$ is maintained. Otherwise, it slows down to $O(1/k)$. We also notice that, for strongly convex case, the rate $O(1/k^2)$ of the fully linearized proximal ALM in \cite{Xu2017} is in ergodic sense.

As mentioned at the end of \cref{sec:apd-im}, the sequence $\{(x_k,y_k,v_k,\lambda_k)\}$ in \cref{algo:Ex-APDFB-h-g} can be further simplified to $\{(y_k,v_k)\}$ or $\{(x_k,v_k)\}$ if we drop $\{\lambda_k\}$, by using \eqref{eq:apd-ex-x-ex-l-l-h-g}.
When $\mathcal X = \R^n$, \cref{algo:Ex-APDFB-h-g} is very close to the accelerated penalty method in \cite{li_convergence_2017}, which also produces some two-term sequence $\{(x_k,y_k)\}$. Moreover, they share the same nonergodic convergence rate (cf. \cref{eq:rate-ex-apd-fb} and \cite[Theorem 4]{li_convergence_2017}).

\section{Application to  Decentralized Distributed Optimization}
\label{sec:ddo}
In this part, we focus on numerical performance of \cref{algo:Semi-APDFB-h-g} for solving decentralized distributed optimization.

Assume there is some simple connected graph $G = (V,E)$ with $n = |V|$ nodes. Each node $i\in V$ stands for an agent who accesses the information of a smooth convex objective $f_i:\R^m\to\R$ and communicates with its neighbor $N(i): = \{j\in V:(i,j)\in E\}$. The goal is to minimize the average
\begin{equation}\label{eq:min-fi-ddo}
	\min_{x\in\R^m} 
	\frac{1}{n}\sum_{i=1}^{n}f_i(x).
\end{equation}
Let $q=mn$ and introduce a vector ${\bm x}\in \R^{q}$ which has $n$ blocks. 
Each block ${\bm x}(i)\in\R^m$ is located at node $i$ and becomes a local variable 
with respect to $f_i$. Then, \cref{eq:min-fi-ddo} can be reformulated as follows
\begin{equation}\label{eq:min-f-ddo}
	\min_{\bm x\in\R^q} {f}(\bm x): = \frac{1}{n}\sum_{i=1}^{n}f_i(\bm x(i)),
	\quad \st\, \bm x(1) = \cdots=\bm x(n).
\end{equation}
We mainly consider the smooth convex case $f_i\in\mathcal S_{\mu_i,L_i}^{1,1}$ with $0\leqslant \mu_i\leqslant L_i<\infty$, which implies that ${f}\in\mathcal S_{\mu,L}^{1,1}$ with ${\mu}= \min\{\mu_i\}/n$ and ${L}= \max\{L_i\}/n$.

As we see from \cref{eq:min-f-ddo}, there comes an additional constraint, called the consensus restriction. One popular way to treat this condition is to introduce some matrix ${A}\in\R^{q\times q}$ that is symmetric positive semi-definite with null space ${\rm span }\{{\bf 1}_{q}\}$, where ${\bf 1}_q\in\R^{q}$ denotes the vector of all ones. Then \cref{eq:min-f-ddo} can be rewritten as the same form of \cref{eq:min-f-X-Ax-b}:
\begin{equation}\label{eq:min-f-A-ddo}
	\min_{\bm x\in\R^q} f(\bm x) 
	\quad \st A\bm x  = 0,
\end{equation}
which is also equivalent to 
\begin{equation}\label{eq:min-f-sq-A-ddo}
	\min_{\bm x\in\R^q} f(\bm x) 
	\quad \st \sqrt{A}\bm x  = 0.
\end{equation}
Indeed, we have $\sqrt{A}\bm x = 0\Longleftrightarrow A\bm x = 0$ since $A$ is positive semi-definite. Besides, as the null space of $A$ is ${\rm span}\{{\bf 1}_q\}$, it follows that $A\bm x=0\Longleftrightarrow\bm x(1) = \cdots=\bm x(n)$.

There are many candidates for the matrix $A$. Here we adopt $A = \Delta_G\otimes I_m$, where 
$I_m$ is the identity matrix of order $m$ and $\Delta_G = D_G-A_G$ is the Laplacian matrix of the graph $G$, with $D_G$ being the diagonal matrix of vertex degree and $A_G$ being the adjacency matrix of $G$. As $G$ is connected, by \cite[Lemma 4.3]{Bapat2014}, the null space of $\Delta_G$ is 
${\rm span}\{{\bf 1}_n\}$. This means the current $A$ satisfies our demand.

To solve \cref{eq:min-fi-ddo}, we apply \cref{algo:Semi-APDFB-h-g} to problem \cref{eq:min-f-sq-A-ddo} and further simplify it as \cref{algo:Semi-APD-DDO}, where we set $\beta=0$ since $\sigma_{\min}(\sqrt{A}) = 0$ and choose $\lambda_0=\sqrt{A}{\bm x}_0 $ to eliminate $\{\lambda_{k}\}$ since by \eqref{eq:apd-ex-x-im-l-l-h-g} and \eqref{eq:apd-ex-x-im-l-x-h-g} we have that
\[
\lambda_{k+1} - \theta_{k+1}^{-1}\sqrt{A}{\bm x}_{k+1}
=\lambda_{k} - \theta_{k}^{-1}\sqrt{A}{\bm x}_k=\cdots
=\lambda_{0} - \sqrt{A}{\bm x}_0 =0,
\]
which implies $\lambda_k = \theta_k^{-1}\sqrt{A}{\bm x}_k$ for all $k\in\mathbb N$. Recall that for \cref{eq:min-f-sq-A-ddo} the key step is to compute ${\bm v}_{k+1}$ from \cref{eq:vk1-SPD}, which now reads as follows
\begin{equation}\label{eq:vk1}
	(\epsilon_kI+A){\bm v}_{k+1}	={\bm s}_k,
\end{equation}
where $\epsilon_k=\tau_k\theta_k/\alpha_k^{2}$ and ${\bm s}_k=\epsilon_k{\bm z}_k-A{\bm x}_k/\alpha_k$. 
Since $\gamma_{k}=L\alpha_k^2$ and $\tau_k=\gamma_k+\mu\alpha_k$, we have $\epsilon_k = O(\theta_k)$. Therefore, \cref{eq:vk1} is a nearly singular SPD system and careful iterative 
method shall be considered. Instead of solving the original system \cref{eq:vk1}, in the next part, we shall discuss how to obtain ${\bm v}_{k+1}$ efficiently, by applying  PCG iteration (i.e., \cref{algo:PCG}) to the augmented system \cref{eq:aug-vk1}.
\begin{algorithm}
	\caption{Semi-APDFB method for \cref{eq:min-f-sq-A-ddo} with $f\in\mathcal S_{\mu,L}^{1,1},\,0\leqslant \mu\leqslant L<\infty$}
	\label{algo:Semi-APD-DDO}
	\begin{algorithmic}[1] 
		\REQUIRE  $\gamma_0,\, \bm x_0,\,{\bm v}_0\in\R^{q}$.
		\FOR{$k=0,1,\ldots$}
		\STATE Choose step size $\alpha_k=\sqrt{\gamma_k/L}$.
		\STATE Update $\displaystyle \gamma_{k+1} = (\gamma_k+\mu\alpha_k)/(1+\alpha_k)$ and $\displaystyle \theta_{k+1}= \theta_k/(1+\alpha_k)$.
		\STATE Set $\displaystyle  \tau_k = \gamma_k+\mu\alpha_k$ and $\displaystyle  {\bm y}_{k}={}(\bm x_k+\alpha_k{\bm v}_{k})/(1+\alpha_k)$.
		\STATE Set  $		{\bm w}_{k}={}\tau_k^{-1}(\gamma_k{\bm v}_{k}+\mu\alpha_k{\bm y}_k)$ and ${\bm z}_k = {\bm w}_k-\alpha_k/\tau_k\nabla f({\bm y}_k)$.
		\STATE Solve $\widehat{\bm v}=(\widehat{ v}_1,\widehat{\bm v}^{\top}_2)^{\top}$ from \cref{eq:aug-vk1} via \cref{algo:PCG} with Jacobi preconditioner and the tolerance $\varepsilon=\|A{\bm x}_k\|/10$.
		\STATE Recover ${\bm v}_{k+1} = \widehat{ v}_1{\bf 1}_q+\widehat{\bm v}_2$.		
		\STATE Update $\displaystyle  \bm x_{k+1}={}(\bm x_k+\alpha_k{\bm v}_{k+1})/(1+\alpha_k)$.
		\ENDFOR
	\end{algorithmic}
\end{algorithm}
\subsection{Robust null space method for \cref{eq:vk1}}
For simplicity, let us fix $k$ and write $\epsilon=\epsilon_k,\,{\bm s} = {\bm s_k}$ and $A_\epsilon =\epsilon I+A $.
Note that the condition number of $A_\epsilon$ is $1+\lambda_{\max}(A)/\epsilon$. 
Therefore, classical iterative methods, such as Jacobi and Gauss-Seidel (GS) iterations, have to converge dramatically slowly as $\epsilon$ becomes small.

Recall that the null space of $A$ is ${\rm span}\{{\bf 1}_{q}\}$. 
Following \cite{lee_robust_2007,padiy_generalized_2001}, let us 
introduce the {\it augmented system} of \cref{eq:vk1} by that
\begin{equation}\label{eq:aug-vk1}
	\mathcal A\widehat{\bm v} =\begin{pmatrix}
		\epsilon  q&		\epsilon{\bm 1}_q^{\top}		\\
		\epsilon{\bf 1}_q&		A_\epsilon\\
	\end{pmatrix}
	\begin{pmatrix}
		\widehat{ v}_1\\\widehat{\bm v}_2
	\end{pmatrix}
	=
	\begin{pmatrix}
		{\bf 1}_q^{\top}{\bm s}\\ 	{\bm s}
	\end{pmatrix}
	= \widehat{\bm s}.
\end{equation}
Clearly, this system is singular  and has infinitely many solutions but the solution $\bm v$ to \cref{eq:vk1} can be uniquely recovered from ${\bm v} = \widehat{ v}_1{\bf 1}_q+\widehat{\bm v}_2$, where $\widehat{\bm v}=(\widehat{ v}_1,\widehat{\bm v}_2^{\top})^{\top}$ is any solution to the augmented system \cref{eq:aug-vk1}. 

The Jacobi method for \cref{eq:aug-vk1}, which is also a block iteration since $A = \Delta_G\otimes I_m$, reads as follows: given the $l$-th iteration $\widehat{\bm v}^l=(\widehat{ v}^l_1,\widehat{\bm v}^l_2{}^\top)^{\top}$, do the next one:  
\begin{equation}\label{eq:J}
	\widehat{ v}_{1}^{l+1} = {}\frac{{\bf 1}_q^{\top}\widehat{\bm v}_{2}^{l}}{q}	,\quad
	\widehat{\bm v}_{2}^{l+1}(i)={} 	\frac{1}{\epsilon+a_{ii}}\left({\bm s}(i)-\epsilon\widehat{ v}^{l}_1+\sum_{\substack{j\in N(i)}}\widehat{\bm v}_{2}^{l}(j)\right),
\end{equation}
simultaneously for $1\leqslant i\leqslant n$. The GS iteration for \cref{eq:aug-vk1}, which is also a block GS method, is formulated as follows
\begin{equation}\label{eq:GS}
	\small
	\widehat{ v}_{1}^{l+1} = \frac{{\bf 1}_q^{\top}\widehat{\bm v}_{2}^{l}}{q}	,\quad
	\widehat{\bm v}_{2}^{l+1}(i)={} 	\frac{1}{\epsilon+a_{ii}}\left({\bm s}(i)-\epsilon\widehat{ v}^{l+1}_1+\sum_{\substack{j<i\\j\in N(i)}}\widehat{\bm v}_{2}^{l+1}(j)+\sum_{\substack{j>i\\j\in N(i)}}\widehat{\bm v}_{2}^{l}(j)\right),
\end{equation}
sequentially for $1\leqslant i\leqslant n$.
One can also consider the symmetrized version, i.e., the symmetry Gauss-Seidel (SGS) method \cite{xu_method_2001}.

In \cite[Lemma 3.1]{lee_robust_2007}, it has been analyzed that the GS iteration \cref{eq:GS} for the augmented system \cref{eq:aug-vk1} is robust in terms of $\epsilon$, and when $\epsilon\to0$, the convergence rate converges to that 
of the the GS iteration for the singular system $A{\bm v} = {\bm s}$ (with ${\bm s}$ belonging to the range of $A$). As further proved in \cite[Theorem 4.1]{lee_robust_2007}, the iteration \cref{eq:GS} is nothing but a successive subspace correction method for \cref{eq:vk1} with respect to a special space decomposition $\R^{q} = {\rm span}\{{\bf 1}_q\}+\sum_{i=1}^{q}{\rm span}\{{\bm e}_i\}$, where ${\bm e}_i$ is the $i$-th canonical basis of $\R^q$. Recall that ${\rm span}\{{\bf 1}_q\}$ happens to be the null space of $A$.

For concrete illustration, we generate two simple connected graphs from the package DistMesh (cf. \cite{persson_simple_2004} or \url{http://persson.berkeley.edu/distmesh/}); see \cref{fig:sphere_torus}. They are surface meshes on the unit sphere and a torus, respectively. The former has 480 nodes and 1434 edges, and the latter possesses 640 nodes and 1920 edges. They share the same average vertex degree 6.
\begin{figure}[H]
	\centering
	\includegraphics[width=320pt,height=150pt]{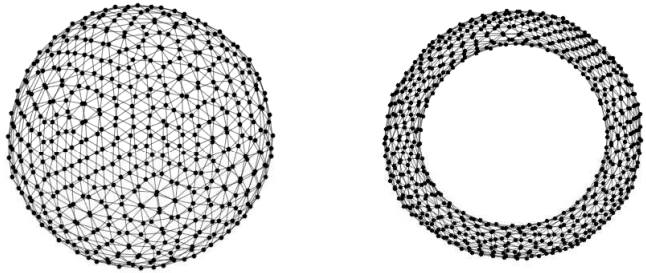}
	\caption{Two connected graphs on the surfaces of the unit sphere (left) and a torus (right). The left has 480 nodes and 1434 edges, and the right has 640 nodes and 1920 edges. The average vertex degree is 6.}
	\label{fig:sphere_torus}
\end{figure}

For simplicity, we consider $m=1$ which means $A=\Delta_G$ is the Laplacian of  the graph $G$. Performances of Jacobi, GS and SGS iterations for the original SPD system \cref{eq:vk1} and the augmented system \cref{eq:aug-vk1} are reported in \cref{tab:robust-test_sphere,tab:robust-test_torus}. Also, results of PCG (i.e., \cref{algo:PCG}) with Jacobi and SGS preconditioners for the augmented system \cref{eq:aug-vk1} are given. All the iterations are stopped either the maximal iteration number 1e5 is attained or the relative residual is smaller than 1e-6.
\renewcommand\arraystretch{1}
\begin{table}[H]
	\centering
	\caption{Performances of iterative solvers for \cref{eq:vk1,eq:aug-vk1}, related to the graph on the unit sphere in \cref{fig:sphere_torus}. Here, $\times$ means the maximal iteration number 1e5 is attained while the relative residual is larger than 1e-6.}
	\label{tab:robust-test_sphere}
	\small\setlength{\tabcolsep}{1.2pt}
	\begin{tabular}{cccccccccccccccccc}
		\toprule
		&\phantom{a} &\multicolumn{5}{c}{\cref{eq:vk1}}&
		&\phantom{a} &  \multicolumn{9}{c}{\cref{eq:aug-vk1}}\\
		\cmidrule{3-7} 	\cmidrule{10-18} 
		$\epsilon$&&Jacobi& &GS&    & SGS   &&&Jacobi& &GS&    & SGS   && PCG-Jacobi&    & PCG-SGS \\
		\midrule
		$1e$-1&&746&&	375	&&212&&&	232&&	113&&	66&&	30	&&16\\
		$1e$-2&&7439	&&3740	&&2104&&&	427&&	214&&	117&&39&&	19\\
		$1e$-3&&74358	&&37463	&&20968&&&	461&&	237&&	134&&	41&&	19\\
		$1e$-4&&$\times$&&	$\times$&&	$\times$	&&&479&&	241&&	131&&	41	&&19\\
		$1e$-6&&$\times$&&	$\times$&&	$\times$&&&	468	&&236	&&131&&	41&&	19\\
		$1e$-8&&$\times$&&	$\times$&&	$\times$	&&&468&&	233&&	131&&	41&&	19\\
		\bf 0&&\bf 473	&&\bf 235&&	\bf134&&&	-&&	-&&	-&&	-&&	-\\
		\bottomrule
	\end{tabular}
\end{table}

It is observed that all the iterations for the augmented system \cref{eq:aug-vk1} are 
robust with respect to $\epsilon$, and PCG with SGS preconditioner performances the best. However, we have to mention that, in the setting of decentralized distributed optimization, both GS and SGS iterations may not be preferable since all the nodes are updated {\it sequentially}. The Jacobi iteration \cref{eq:J} is parallel but another issue, which also exists in the GS iteration \cref{eq:GS}, is that there comes an additional variable $	\widehat{ v}_{1}\in \R$, which is updated via the average of $\widehat{\bm v}_{2}$. Moreover, to recover ${\bm v}_{k+1}=\widehat{ v}_{1}{\bf 1}_q+\widehat{\bm v}_{2}$, all nodes need it. This can be done by introducing a master node that connects all other nodes and is responsible for updating $\widehat{ v}_{1}$ and then sending it back to local nodes. Note again that both $\widehat{ v}_{1}$ and $\widehat{\bm v}_{2}$ can be obtained simultaneously for Jacobi iteration. Therefore, the master and other nodes are allowed to be asynchronized. This maintains the decentralized nature of distributed optimization. 
\begin{table}[H]
	\centering
	\caption{Performances of iterative solvers for \cref{eq:vk1,eq:aug-vk1}, related to the graph on the torus in \cref{fig:sphere_torus}. Here, $\times$ means the maximal iteration number 1e5 is attained while the relative residual is larger than 1e-6.}
	\label{tab:robust-test_torus}
	\small\setlength{\tabcolsep}{1.2pt}
	\begin{tabular}{cccccccccccccccccc}
		\toprule
		&\phantom{a} &\multicolumn{5}{c}{\cref{eq:vk1}}&
		&\phantom{a} &  \multicolumn{9}{c}{\cref{eq:aug-vk1}}\\
		\cmidrule{3-7} 	\cmidrule{10-18} 
		$\epsilon$&&Jacobi& &GS&    & SGS   &&&Jacobi& &GS&    & SGS   && PCG-Jacobi&    & PCG-SGS \\
		\midrule	
		$1e$-1&&751&&	378&&	216&&&	319&&	167&&	92&&	35&&	17\\
		$1e$-2&&7463&&	3759&&	2147&&&	1031&&	526&&	294&&	57&&	22\\
		$1e$-3&&74825	&&37673&&	21408	&&&1356&&	684&&	384&&	59&&	27\\
		$1e$-5&&$\times$	&&$\times$&&	$\times$&&&	1396&&	708&&	399&&	60&&	27\\
		$1e$-7&&	$\times$	&&$\times$&&	$\times$&&&	1397&&	707	&&396&&	59&&	27\\
		$1e$-9&&	$\times$&&	$\times$&&	$\times$&&&	1396&&	701&&	400	&&60	&&27\\
		\bf 0&&\bf1156&&\bf	634&&	\bf318&&&	-&&	-&&	-&&	-&&	-\\
		\bottomrule
	\end{tabular}
\end{table}
We have not presented convergence analysis in inexact setting but for all the forthcoming numerical tests in \cref{sec:ddo-lso,sec:ddo-log}, we adopt \cref{algo:PCG} with Jacobi preconditioner and the tolerance $\varepsilon=\|A{\bm x}_k\|/10$ to solve the augmented system \cref{eq:aug-vk1}; see step 6 in \cref{algo:Semi-APD-DDO}. 
\subsection{Decentralized least squares}
\label{sec:ddo-lso}
Let us now consider the decentralized least squares
\begin{equation}\label{eq:ddo-lso}
	\min_{x\in\R^m} 
	\frac{1}{n}\sum_{i=1}^{n}f_i(x)= 
	\frac{1}{n}\sum_{i=1}^{n}\frac{1}{2}\nm{B_ix-b_i}^2,
\end{equation}
where $B_i\in\R^{p\times m}$ and $b_i\in\R^{p}$ are randomly generated at each node $i$. 
Here we set $m = 200$ and the sample number $p= 5$. Note that each $f_i$ in \cref{eq:ddo-lso} is smooth convex with $\mu_i=0$ and $L_i = \nm{B_i}^2$, and for $f(\bm x) = \frac{1}{n}\sum_{i=1}^{n}f_i(\bm x(i))$, we have $f\in\mathcal S_{0,L}^{1,1}$ with $L = \max\{L_i\}/n$.
\begin{figure}
	\centering
	\includegraphics[width=320pt,height=300pt]{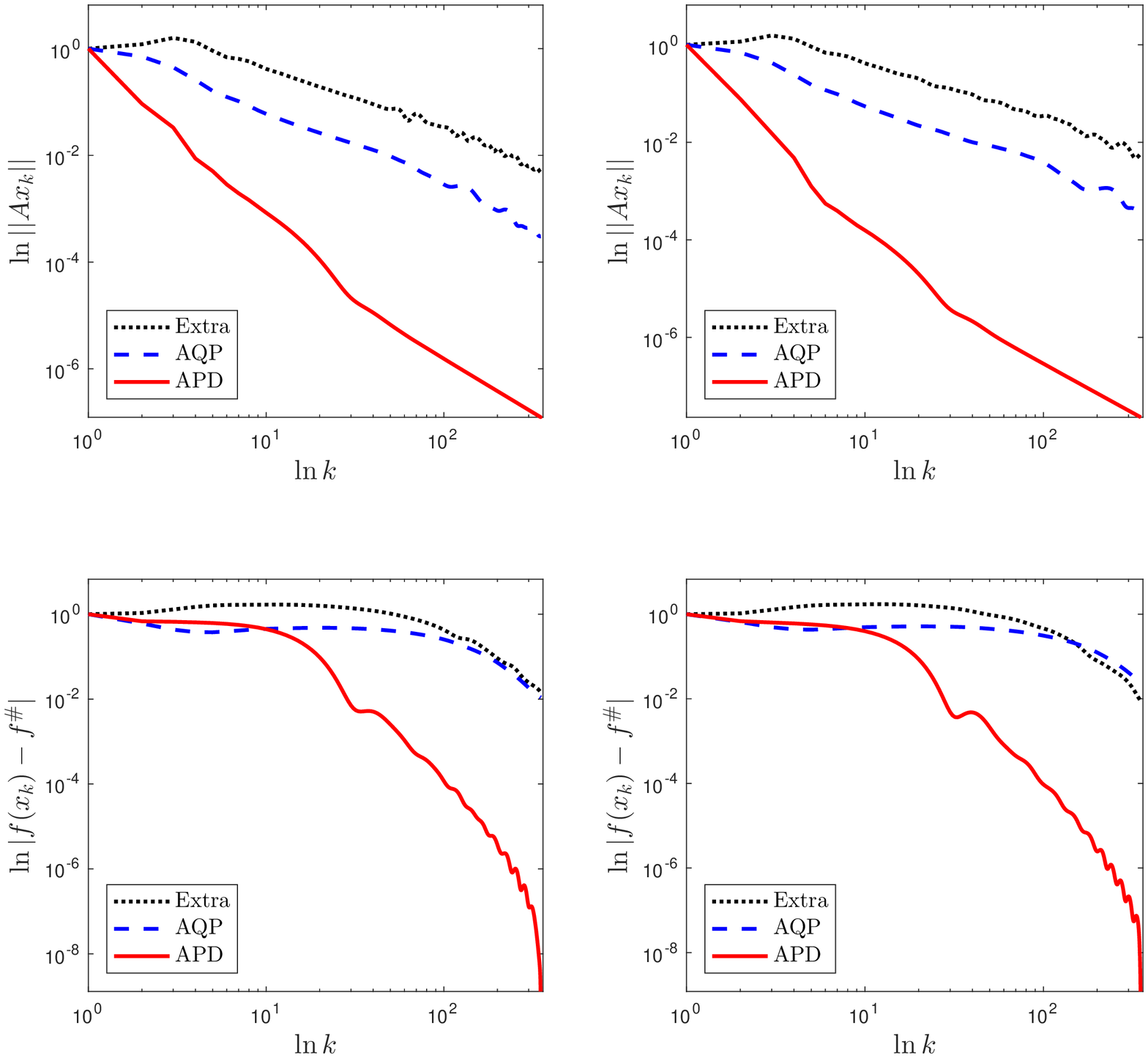}
	\caption{Convergence behaviors of \cref{algo:Semi-APD-DDO}, Extra and AQP for problem \cref{eq:ddo-lso} on the sphere graph (left) and the torus graph (right). Here $f^{\#}$ is the approximated optimal objective value.}
	\label{fig:ddo-lsr}
\end{figure}

We compare \cref{algo:Semi-APD-DDO} with Extra \cite{Shi2015} 
and the accelerated quadratic penalty (AQP)  method \cite{li_convergence_2017} for solving \cref{eq:ddo-lso} 
with respect to the previous two connected graphs (plotted in \cref{fig:sphere_torus}). 
Starting from the problem \cref{eq:min-f-ddo}, Extra requires the so-called {\it mixing matrix} $W$, which is related to the underlying graph $G$ and satisfies \cite[Assumption 1]{Shi2015}, and it repeats the  iteration procedure below
\begin{equation}\label{eq:Extra}
	\left\{
	\begin{aligned}
		{}&		{\bm e}_{k}=\bm x_{k}-\left(\widehat{W}\otimes I_m\right)\bm x_{k-1}+\alpha\nabla f(\bm x_{k-1}),\\		
		{}&		\bm x_{k+1} =\left(W\otimes I_m\right)\bm x_{k}-\alpha\nabla f(\bm x_{k})+{\bm e}_{k},
	\end{aligned}
	\right.
\end{equation}
for $k\geqslant 1$, where $\widehat{W}= (I+W)/2$ and the initial step is $		\bm x_1 = {}\left(W\otimes I_m\right)\bm x_0-\alpha\nabla f(\bm x_0)$. Assuming the spectrum of $W$ lies in $(-1,1]$ and that $\alpha=\lambda_{\min}(\widehat{W})/L$, \cite[Theorem 3.5]{Shi2015} gave the ergodic sublinear rate $O(1/k)$ for \cref{eq:Extra}. The AQP method \cite[Eq.(9)]{li_convergence_2017} rewrites \cref{eq:ddo-lso} as the form \cref{eq:min-f-sq-A-ddo} with $A = (I-U)/2\otimes I_m$ and performs the following iteration 
\begin{equation}\label{eq:AQP}
	\left\{
	\begin{aligned}
		{}&{\bm y}_k = {\bm x}_k+\frac{k-1}{k+1}(\bm x_k-\bm x_{k-1}),\\
		{}&		\bm x_{k+1} = {\bm y}_k - \frac{\nabla f({\bm y}_k)+(k+1)A{\bm y}_k}{L+k+1},
	\end{aligned}
	\right.
\end{equation}
for all $k\geqslant 1$, where $U$ is some symmetric doubly stochastic matrix such that $U_{ij}>0$ if and only of $(i,j)\in E$. The nonergodic convergence rate $O(1/k)$ for \cref{eq:AQP} has been established in \cite[Theorem 6]{li_convergence_2017}.

In this example and the next one, we choose 
$W =U= I-\Delta_G/\tau$ with $\tau=\lambda_{\max}(\Delta_G)$. Then $W$ fulfills \cite[Assumption 1]{Shi2015} and by \cite[Theorem 4.12]{Bapat2014}, such $U$ also meets the requirement in \cref{eq:AQP}. In \cref{fig:ddo-lsr}, we plot the convergence behaviors of Extra, AQP and APD (\cref{algo:Semi-APD-DDO}). By \cref{thm:apd-ex-x-im-l-h-g}, APD converges with a faster sublinear rate $O(1/k^2)$ and numerical results  illustrate that our method outperforms the others indeed.

\subsection{Decentralized logistic regression}
\label{sec:ddo-log}
We then look at the regularized decentralized logistic regression
\begin{equation}\label{eq:ddo-lr}
	\min_{x\in\R^m}
	\frac{1}{n}\sum_{i=1}^{n}f_i(x)=
	\frac{1}{n}\sum_{i=1}^{n}\left(\ln\left(1+\exp\left(-b_i\theta_i^{\top}x\right)\right)
	+\frac{\delta}{2}\nm{x}^2\right),
\end{equation}
where $\delta>0$ stands for the regularize parameter, $\theta_i\in\R^{ m}$ is the data variable and $b_i\in\{-1,1\}$ denotes the binary class. Here we take $\delta=0.5$ and $m = 300$. Note that each $f_i$ is smooth 
strongly convex and an elementary computation gives $\mu_i=\delta$ and $ L_i = \delta+|b_i|^2\nm{\theta_i}^2/4$.  Hence $f(\bm x) = \frac{1}{n}\sum_{i=1}^{n}f_i(\bm x(i))$ is also smooth strongly convex with $\mu= \delta/n$ and $L = \max\{L_i\}/n$.
\begin{figure}[H]
	\centering
		\includegraphics[width=320pt,height=300pt]{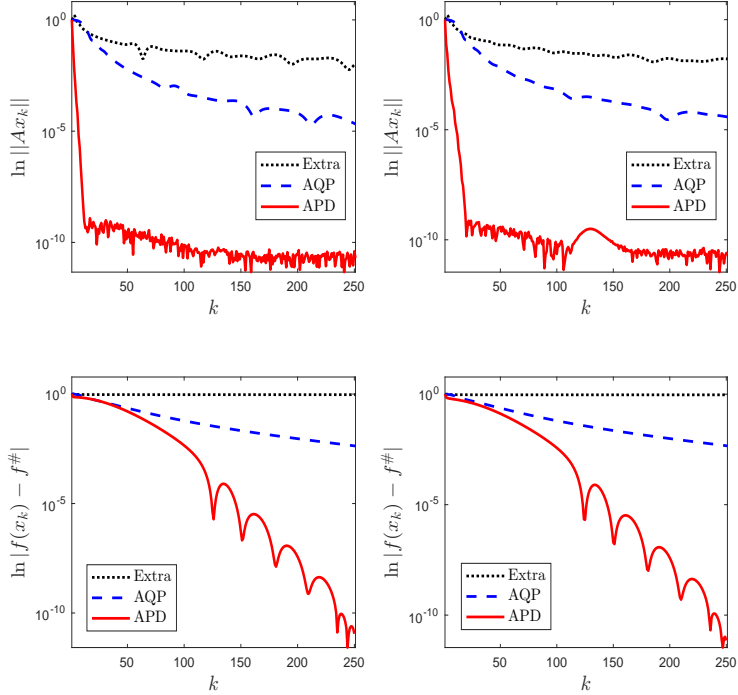}
	\caption{Convergence behaviors of \cref{algo:Semi-APD-DDO}, Extra and AQP for problem \cref{eq:ddo-lr} on the sphere graph (left) and the torus graph (right). Here $f^{\#}$ is the approximated optimal objective value.}
	\label{fig:ddo-log}
\end{figure}

In this case, the corresponding variant of AQP \cref{eq:AQP} has the theoretical sublinear rate $O(1/k^2)$ and reads as follows
\begin{equation}\label{eq:aqp-mu}
	\left\{
	\begin{aligned}
		{}&{\bm y}_k = {\bm x}_k+\frac{(\eta_k\theta_k-\mu\theta^2_k)(1-\theta_{k-1})}{(\eta_k-\mu\theta^2_k)\theta_{k-1}}(\bm x_k-\bm x_{k-1}),\\
		{}&		\bm x_{k+1} = {\bm y}_k -\eta_k^{-1}\left(\theta^2_k\nabla f({\bm y}_k)+\mu A{\bm y}_k\right),
	\end{aligned}
	\right.
\end{equation}
where $\eta_k = L\theta_k^2+\mu$ and $\theta_k^2+\theta_{k-1}^2\theta_k =\theta_{k-1}^2$ with $\theta_0=1$. By \cite[Theorem 3.7]{Shi2015}, Extra \cref{eq:Extra} has linear convergence with the step size $\alpha=\mu\lambda_{\min}(\widehat{W})/L^2$. However, numerical outputs in \cref{fig:ddo-log} show it performances even worse than AQP \cref{eq:aqp-mu}. This may be due to that the mixing matrix $W$ in \cref{eq:Extra} is not chosen properly and not much efficient for information diffusion in the graph. There are some alternative choices summarized in \cite[Section 2.4]{Shi2015} and we tried the Metropolis constant edge weight matrix, which performs not much better either and is not displayed here. We would not look at more mixing matrices beyond. To conclude, we observe fast linear convergence of APD (\cref{algo:Semi-APD-DDO}) from \cref{fig:ddo-log}, for both the objective gap and the feasibility.

\section{Concluding Remarks}
\label{sec:apd-conclu}
In this work, for minimizing a convex objective with linear equality constraint, we introduced a novel second-order dynamical system, called accelerated primal-dual flow, and proved its exponential decay property in terms of a suitable Lyapunov function. It was then discretized via different type of numerical schemes, which give a class of accelerated primal-dual algorithms for the affine constrained convex optimization problem \cref{eq:min-f-X-Ax-b}. 

The explicit scheme \cref{eq:apd-ex-x-ex-l-x-h-g} corresponds to fully linearized proximal ALM and semi-implicit discretizations (cf. \cref{eq:apd-im-x-ex-l-v} and \cref{eq:apd-ex-x-im-l-x-h-g}) are close to partially linearized ALM. The subproblem of \cref{eq:apd-ex-x-im-l-x-h-g} has special structure and can be used to develop efficient inner solvers. Also, nonergodic convergence rates have been established via a unified discrete Lyapunov function. Moreover, the semi-implicit method \cref{eq:apd-ex-x-im-l-x-h-g} has been applied to decentralized distributed optimization and performances better than the methods in \cite{li_convergence_2017,Shi2015}.

Our differential equation solver approach provides a systematically way to design new primal-dual methods for problem \cref{eq:min-f-X-Ax-b}, and the tool of Lyapunov function renders an effective way for convergence analysis. For future works, we will
pay attention to the solution existence and the exponential decay for our APD flow system \cref{eq:apd-sys-v} in general nonsmooth setting. Besides, convergence analysis under inexact computation shall be considered as well. 

At last, it is worth extending the current continuous model together its numerical discretizations to two block case \cref{eq:2b}. As discussed in \cref{rem:jdmm}, both the semi-implicit discretization \cref{eq:apd-im-x-ex-l-v} and the explicit one \cref{eq:apd-ex-x-ex-l-x-h-g} can be applied to the two block case \cref{eq:2b} and lead to parallel ADMM-type methods. However, to get the rate $O(1/k^2)$, they require strong convexity of $f$. Hence, it would also be our ongoing work for developing new accelerated primal-dual splitting methods that can handle partially strongly convex objectives.

\bibliographystyle{abbrv}

\end{document}